\pgfplotsset{compat=1.15}
\definecolor{ccqqqq}{rgb}{0.8,0.,0.}
\definecolor{qqqqff}{rgb}{0.,0.,1.}
\definecolor{xfqqff}{rgb}{0.4980392156862745,0.,1.}
\numberwithin{equation}{section}
\renewcommand{\H}{\mathcal{H}}
\newcommand{\E}{\mathcal{E}}
\newcommand{\F}{\mathcal{F}}
\newcommand{\G}{\mathcal{G}}
\renewcommand{\L}{\mathcal{L}}
\newcommand{\N}{\mathcal{N}}
\renewcommand{\S}{\mathbb{S}}
\newcommand{\T}{\mathcal{T}}
\newcommand{\R}{\mathbb{R}}
\newcommand{\Z}{\mathbb{Z}}
\newcommand*{\genbf}[1]{\ifmmode\mathbf{#1}\else\textbf{#1}\fi}
\newcommand{\1}{\mathbbm{1}}
\renewcommand{\L}{\mathcal{L}}
\newtheorem{theorem}{Theorem}[section]
\newtheorem{lemma}[theorem]{Lemma}
\newtheorem{proposition}[theorem]{Proposition}
\newtheorem{corollary}[theorem]{Corollary}
\newtheorem{definition}[theorem]{Definition}
\theoremstyle{definition}
\newtheorem{remark}[theorem]{Remark}
\newtheorem{example}[theorem]{Example}
\newcommand{\todo}[1]{\vspace{5mm}\par \noindent
\framebox{\begin{mihttps://www.overleaf.com/project/5da4ac389b02480001849b84nipage}[c]{0.95 \textwidth} \tt #1\end{minipage}}
\vspace{5mm} \par}
\title{Continuum limits of discrete isoperimetric problems and Wulff shapes in lattices and quasicrystal tilings}
\author{Giacomo Del~Nin
\footnote{Giacomo.Del-Nin@warwick.ac.uk, Mathematics Institute, University of Warwick, 
Zeeman Building, CV4 7HP Coventry, UK. ORCID: 0000-0001-7308-9753}
 \and Mircea Petrache
\footnote{mpetrache@mat.uc.cl, Facultad de Matem\'aticas, Pontificia Universidad Cat\'olica de Chile, Avda. Vicu\~na Mackenna 4860, Macul, Santiago, 6904441, Chile. ORCID: 0000-0003-2181-169X}
}
\date{\today}
\begin{document}

\maketitle

\begin{abstract}
We prove discrete-to-continuum convergence of interaction energies defined on lattices in the Euclidean space (with interactions beyond nearest neighbours) to a crystalline perimeter, and we discuss the possible Wulff shapes obtainable in this way.
Exploiting the “multigrid construction” of quasiperiodic tilings (which is an extension of De Bruijn's “pentagrid” construction of Penrose tilings) we adapt the same techniques to also find the macroscopical homogenized perimeter when we microscopically rescale a given quasiperiodic tiling.
\end{abstract}

{\small
\textbf{MSC (2020)}: 49Q20, 49J45 (primary); 
49Q10, 52B11, 52C07, 52C22, 52C23 (secondary).
\textbf{Keywords}: isoperimetric problem, Wulff shape, discrete-to-continuum, lattices, quasicrystals, Gamma convergence, homogenization.
}

\setcounter{tocdepth}{2}
\tableofcontents






\section{Introduction}
The question of what crystal shapes are induced by what kind of interactions has preoccupied researches since the beginning of the field of crystallography. Mathematically, the study of crystal shapes has been first put on a firm ground within the continuum theory, starting with the work of Wulff \cite{wulff}, later reformulated and extended by Herring \cite{herring} and others \cite{liebmann, laue, dinghas}; see also \cite{Tay78} and references therein, for the connection to anisotropic perimeter functionals. In the continuum study, the role of the microscopic structure of the material considered is not modelled explicitly, and one starts by studying a surface energy of the form 
\begin{equation}\label{eq:defper}
P_\phi(E):=\begin{cases}
\displaystyle \int_{\partial^* E} \phi(\nu_E)d\H^{d-1} & \text{if $E$ is a set of finite perimeter,}\\[3mm]
+\infty & \text{otherwise,}
\end{cases}
\end{equation}
where $\phi:\mathbb R^d\to [0, +\infty]$ is a $1$-homogeneous convex function, $E\subset\mathbb R^d$ is a finite-perimeter set, $\partial^*E$ is the reduced boundary of $E$ and $\mathcal H^{d-1}$ is the $(d-1)$-dimensional Hausdorff measure in $\mathbb R^d$. See the book \cite{maggibook} for details. The optimizer of $P_\phi$ among unit-volume competitors gives the shape of an ideal crystal with anisotropy $\phi$, and its shape is called the {\bf Wulff shape} corresponding to $\phi$, see \cite{herring, Tay78}.

\medskip

In this work, we focus on the link between discrete energy-minimization models and the minimization giving rise to the Wulff-shape problem. We will think of a discrete crystal to be a fixed-cardinality minimum-energy subset of the vertices of a lattice, or a subset of tiles in a quasiperiodic tiling. The energies that we consider are sums of pairwise interactions that respect the periodic or quasiperiodic structure. We establish compactness and $\Gamma$-convergence results in which, when we scale down the lattice as we increase the cardinality of point or tile configurations, the discrete energy functionals converge to a perimeter functional as in \eqref{eq:defper}. We then consider the effect of modifying the discrete interaction model at the microscopic scale, on the macroscopic limit Wulff shape obtained in the $\Gamma$-limit. This endeavor fits within the general theory of discrete-to-continuum limits for crystals and quasicrystals. See \cite{AFS} for the triangular lattice, \cite{braidesetal1} for another approach for quasicrystals and \cite{braidesetal2} for a homogenization result on the Penrose tiling, and the discussion below for more related results.

\subsection{Setting and main results for lattice energies}\label{ssec:persetup}
We consider a lattice $\L=M\mathbb Z^d\subset \R^d$, where $M\in \mathrm{GL}(d)$. We define $\det\L$ as $|\det M|$ whenever $\L=M\Z^d$ (we refer to \cite{bourbaki1998general} for a proof that this is a well-defined quantity). We consider configurations $X_N=\{x_1,\ldots,x_N\}$ lying in $\L$ and 
we define the energy
\begin{equation}\label{eq:energy}
\E(X_N)=\sum_{x\in X_N} \sum_{x'\in X_N\setminus \{x\}} V(x'-x).
\end{equation}
Here $V:\mathcal L\to (-\infty,0]$ is a fixed potential which may quantify the fall in energy due to the formation of atomic bonds in a crystal, for example. We first consider the case where $V$ vanishes outside of a finite subset $\N\subset\mathcal L$ such that $\mathrm{span}_\Z\,\N=\L$ (see Section \ref{subsec:lowerdim} for more general cases).
\medskip
We will be interested in the surface-type energy
\begin{equation}\label{eq:rescenergy}
\F(X_N)=-\sum_{x\in X_N}\sum_{x'\in \L\setminus X_N}V(x'-x),
\end{equation}
which counts the energy excess due to missing bonds. Indeed the energy \eqref{eq:energy} rewrites as $\E(X_N)=C_{\E}N +\F(X_N)$, where $C_{\E}=\sum_{w\in\mathcal L\setminus\{0\}}V(w)$ is a “bulk” term independent of the shape of $X_N$. To every configuration $X\subset\L$ (not necessarily having $N$ points) we associate, denoting by $U_\L:=M([0,1)^d)$ the fundamental cell of $\L$, the set
\begin{equation}\label{eq:ENdef}
E_N(X):={N^{-\frac{1}{d}}}\bigcup_{x\in X} (x+U_\L). 
\end{equation}
We then define the rescaled energies

\begin{equation}\label{rescenergy}
\F_N(E):=\left\{
\begin{array}{ll}
\F(X_N)&\mbox{ if }\exists X_N\subset\L,\quad \sharp X_N=N,\ E_N(X_N)=E, \\[3mm]
+\infty&\mbox{ else.}\end{array}
\right.
\end{equation}

We consider the following convergence: given a sequence $(X_N)_{N\in\mathbb{N}}$, we say that $X_N$ converges to a set $E\subset \R^n$ if $E_N(X_N)\to E$ locally in measure (also referred to as the “$L^1_{loc}$ convergence”, identifying sets with their characteristic function).

\begin{theorem}[Gamma convergence for crystals]\label{thm:main}
The functionals $N^{-\frac{d-1}{d}}\F_N$ $\Gamma$-converge, with respect to the topology above, to a functional of the form $P_V:=P_{\phi_V}$ as in\eqref{eq:defper}, where 
\begin{equation}\label{eq:phiV}
\phi_V(\nu):=\frac{1}{\det \mathcal{L}}\sum_{v\in\N} |V(v)|\langle v, \nu\rangle_+.
\end{equation}
\end{theorem}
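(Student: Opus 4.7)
Since $V\le 0$ is supported on $\N$, write
\[
\F(X_N)=\sum_{v\in\N}|V(v)|\,B_v(X_N),\qquad B_v(X_N):=\#\{x\in X_N:x+v\notin X_N\}.
\]
Because $N^{-1/d}v\in N^{-1/d}\L$ is a vector of the rescaled grid, the translate $E_N-N^{-1/d}v$ is again aligned with the cubes of $E_N$. Hence $E_N\setminus(E_N-N^{-1/d}v)$ is the union of the $B_v(X_N)$ cubes at positions $N^{-1/d}x$ with $x\in X_N$, $x+v\notin X_N$, each of volume $N^{-1}\det\L$, yielding the exact identity
\[
N^{-\frac{d-1}{d}}B_v(X_N)=\frac{1}{\det\L}\cdot\frac{|E_N\setminus(E_N-\eps_N v)|}{\eps_N},\qquad \eps_N:=N^{-1/d}.
\]
In view of \eqref{eq:phiV}, the theorem reduces to analyzing the volumetric quotient $|E_N\setminus(E_N-\eps_N v)|/\eps_N$, for each $v\in\N$, as $E_N\to E$ in $L^1_{\mathrm{loc}}$ and $\eps_N\to 0$.

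\textbf{Liminf and compactness.} The plan is to slice in direction $v$: setting $E^p:=\{t\in\R:p+tv/|v|\in E\}$ for $p\in v^\perp$, Fubini gives
\[
\frac{|E_N\setminus(E_N-\eps_N v)|}{\eps_N}=|v|\int_{v^\perp}\frac{|E_N^p\setminus(E_N^p-\eps_N|v|)|}{\eps_N|v|}\,dp,
\]
with $E_N^p\to E^p$ in $L^1(\R)$ for a.e.\ $p$. The key one-dimensional lemma is: if $A_N\to A$ in $L^1(\R)$ with uniformly bounded perimeter and $s_N\to 0^+$, then $\liminf_N|A_N\setminus(A_N-s_N)|/s_N\ge\#\{\text{right endpoints of }A\}$. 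The proof localizes around each right endpoint $r$ of $A$: $L^1$ convergence produces, for large $N$, a right endpoint $r_N$ of $A_N$ with $r_N\to r$, and the positive gap of $A$ just to the right of $r$ forces $A_N\cap(r_N,r_N+s_N)$ to have vanishing measure, so that $(r_N-s_N,r_N]\subset A_N\setminus(A_N-s_N)$ contributes $s_N(1+o(1))$; distinct endpoints contribute disjointly. Fatou in the outer integral combined with the slicing identity $\int_{\partial^* E}\langle v,\nu_E\rangle_+\,d\H^{d-1}=|v|\int_{v^\perp}\#\{\text{right endpoints of }E^p\}\,dp$ then gives the $\Gamma$-liminf after summing over $v\in\N$ with weights $|V(v)|/\det\L$. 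Compactness follows by standard BV arguments once any single $B_v(X_N)$ with $v$ spanning a non-degenerate direction is used to bound the perimeter of $E_N$.

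\textbf{Limsup.} First take $E$ to be a bounded polytope with $|E|=\det\L$, and set $X_N:=\{x\in\L:x+U_\L\subset N^{1/d}E\}$, correcting by $O(N^{(d-1)/d})$ boundary points so that $\#X_N=N$; then $E_N(X_N)\to E$ in $L^1$. A direct geometric count of points in the inward strips $\{x\in X_N:x+v\notin X_N\}$ near each face $F\subset\partial E$ with outward unit normal $\nu_F$ (these concentrate in a strip of thickness $\sim\langle v,\nu_F\rangle_+$ along $F$) yields
\[
B_v(X_N)=\frac{N^{(d-1)/d}}{\det\L}\int_{\partial E}\langle v,\nu_E\rangle_+\,d\H^{d-1}+o(N^{(d-1)/d}),
\]
and summing in $v$ with weights $|V(v)|$ produces the matching limsup. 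The general finite-perimeter case follows by approximating $E$ in strict BV topology by polytopes of the correct volume and extracting a diagonal sequence, using lower semicontinuity of $P_{\phi_V}$ to close.

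\textbf{Main obstacle.} The subtle step is the joint-limit one-dimensional lower semicontinuity in the liminf. The quotient $|A\setminus(A-s)|/s$ is neither monotone in $s$ nor continuous in $A$ under $L^1$ convergence as $s\to 0$; along the diagonal $s_N=\eps_N|v|$ it can even blow up when $A_N$ has many intervals at spacing comparable to $s_N$, so a naive Reshetnyak or weak-$\ast$ argument on the signed measures $\partial_t\1_{A_N}$ does not suffice. Exploiting the genuine isolation of right endpoints of the limit $A$ by its positive gaps, and using $L^1$ convergence to pin down a corresponding right endpoint $r_N\to r$ in $A_N$ with controlled local geometry, is the essential ingredient.
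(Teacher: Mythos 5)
Your reduction of $N^{-\frac{d-1}{d}}B_v(X_N)$ to the exact volumetric difference quotient $\frac{1}{\det\L}\,|E_N\setminus(E_N-\eps_N v)|/\eps_N$ is correct, and the liminf strategy built on it is genuinely different from the paper's: the paper splits each direction $v$ further into translated sublattices $\L^{v,\tau}$, associates to each an auxiliary union of fundamental cells $E^{v,\tau}(X_N)$ whose anisotropic perimeter $P^v$ equals $\F^{v,\tau}(X_N)$ exactly, and concludes by Reshetnyak lower semicontinuity; you instead work with the single canonical set $E_N$ and slice one-dimensionally. Your route avoids the $\tau$-splitting entirely, which is economical. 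The limsup and compactness parts follow the paper's line (polyhedral approximation plus face-by-face counting; combinatorial perimeter bound).

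There is, however, a genuine gap exactly at the step you flag as essential: the proposed proof of the one-dimensional lemma by localization at right endpoints of the limit does not work. The inclusion $(r_N-s_N,r_N]\subset A_N\setminus(A_N-s_N)$ can fail because $L^1$ convergence controls $|A_N\cap(r,r+\delta)|$ only for fixed $\delta$, not at scale $s_N$, and nothing prevents $A_N$ from having structure at scale $s_N$ near $r$. For example, if locally $A=(r-\delta,r]$ and $A_N=(r-\delta,\,r-s_N/2]\cup(r-s_N/4,\,r]$, then no right endpoint $r_N$ of $A_N$ near $r$ satisfies either $(r_N-s_N,r_N]\subset A_N$ or $(r_N,r_N+s_N)\cap A_N=\emptyset$; the window around any single $r_N$ contributes strictly less than $s_N$, and the full amount $s_N$ is recovered only by summing over \emph{all} right endpoints of $A_N$, including the spurious ones. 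The lemma itself is true, but it needs a global (telescoping) argument rather than a per-endpoint one: for $\phi\in C^1_c(\R)$ with $0\le\phi\le 1$ one has
\begin{equation*}
\frac{|A_N\setminus(A_N-s_N)|}{s_N}\;\ge\;\frac{1}{s_N}\int \1_{A_N}(t)\bigl(\phi(t)-\phi(t-s_N)\bigr)\,dt\;\longrightarrow\;\int \1_{A}\,\phi'\,dt ,
\end{equation*}
and the supremum over such $\phi$ of the right-hand side is the negative part of $D\1_A$, i.e.\ the number of right endpoints (or $+\infty$ if $\1_A\notin BV$, which also removes the need for your uniform perimeter hypothesis on the slices). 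With this replacement your liminf argument closes.

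Two smaller points. In the limsup, "correcting by $O(N^{(d-1)/d})$ boundary points" is not innocuous: each added or removed point changes each $B_v$ by $O(1)$, so a scattered correction changes $\F(X_N)$ by $O(N^{(d-1)/d})$ — the same order as the quantity being computed — and could shift the limit by a positive constant. The paper avoids this by placing the $k_N=O(N^{(d-1)/d})$ correction points as a single cluster of diameter $O(k_N^{1/d})$, whose energy cost is $O(k_N^{(d-1)/d})=O(N^{(d-1)^2/d^2})=o(N^{(d-1)/d})$; you need this or an equivalent device. Finally, for compactness a single direction $v$ only controls the directional variation of $\1_{E_N}$; you need the $B_v$ for $v$ ranging over a $\Z$-spanning subset of $\N$, combined via the subadditivity of $X\mapsto\#((X+a)\setminus X)$ under composition of translations, as in the paper's combinatorial lemma.
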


\medskip
As we discovered after the completion of the preliminary version of this paper, this result had been already proven by Gelli in her PhD thesis \cite{Gel} in a more general form (see also \cite{BraGel} and \cite{AliGel}). We decided to leave the proof (even if the ideas are very close to those in \cite{Gel}) because in our simplified case some of the intricacies of the general case are not present, and moreover the argument will be referenced later in the proof of the quasicrystal case given by Theorem \ref{thmqc}.
In order to prove Theorem \ref{thm:main} we first show in Section \ref{sec:reduction} that it is sufficient to prove it when the lattice $\L$ is $\Z^d$. Then in the rest of Section \ref{sec:proof} we prove it for $\L=\Z^d$.
We also note that for every finite perimeter set $E$ in $\R^d$ we have
\[
P_V(E)=P_{V^{sym}}(E)
\]
where $V^{sym}(v)=\tfrac12\big(V(v)+V(-v)\big)$, as proved in Proposition \ref{prop:equivsymm}, so that it is not restrictive to assume that $V$ is symmetric. We also prove the following compactness result, which motivates the chosen convergence.

\begin{proposition}[Compactness]\label{prop:compactness}
Suppose that $\mathrm{span}_\Z\,\N=\L$. Given a sequence $X_N$ such that
\[
\F(X_N)\leq C N^{\frac{d-1}{d}} 
\]
there exists a subsequence $X_{N_k}$ and a finite perimeter set $E$ such that $E_{N_k}(X_{N_k})\to E$ in $L^1_{loc}$.
\end{proposition}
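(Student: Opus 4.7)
The strategy is to establish a uniform perimeter bound for the sets $F_N := E_N(X_N)$, and then invoke standard $BV$ compactness. Since $F_N$ is the union of $N$ scaled copies of the fundamental cell $N^{-1/d} U_\L$, we have $|F_N| = \det \L$ for every $N$, so $\{\1_{F_N}\}$ is bounded in $L^1_{loc}(\R^d)$. If we additionally show that $\mathrm{Per}(F_N) \leq C$ uniformly in $N$, then $\{\1_{F_N}\}$ is bounded in $BV(B_R)$ for every ball $B_R$; a standard diagonal extraction then produces $F_{N_k} \to E$ in $L^1_{loc}$, and lower semicontinuity of the perimeter yields that $E$ has finite perimeter.

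The heart of the matter is the uniform perimeter bound. Writing $\L = M\Z^d$, the reduced boundary $\partial^* F_N$ is a union of scaled translates of the faces of the fundamental parallelepiped $M([0,1)^d)$, and each such face separates a cube $N^{-1/d}(x+U_\L)$ with $x \in X_N$ from an adjacent cube $N^{-1/d}(x \pm Me_i + U_\L)$ with $x \pm Me_i \in \L \setminus X_N$. Hence, up to a constant depending only on $\L$, $\mathrm{Per}(F_N)$ is bounded by $N^{-\frac{d-1}{d}}$ times the total number of pairs $(x, x \pm Me_i)$ with $x \in X_N$ and $x \pm Me_i \notin X_N$ for $i \in \{1,\ldots,d\}$. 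On the other hand, the definition of $\F$ immediately gives $|V(v)| \cdot \#\{x \in X_N : x + v \notin X_N\} \leq \F(X_N)$ for each $v \in \N$. It therefore suffices to bound the number of $\pm Me_i$-crossings by a constant multiple of $\F(X_N)$.

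For this I would use the spanning hypothesis $\mathrm{span}_\Z \N = \L$ via a telescoping path argument. Write each generator as $Me_i = w_1^{(i)} + \cdots + w_{k_i}^{(i)}$ with $w_m^{(i)} \in \N \cup (-\N)$. For any $x \in X_N$ with $x + Me_i \notin X_N$, the broken path $x, x+w_1^{(i)}, \ldots, x+Me_i$ must cross between $X_N$ and its complement at some step, producing an outgoing $w_m^{(i)}$-crossing. Summing over $x$, the number of outgoing $Me_i$-crossings is at most a fixed integer combination of outgoing $(\pm v_j)$-crossings from $X_N$ with $v_j \in \N$. Finally, for every $v \in \L$ and finite $X_N$, the number of outgoing $(+v)$-crossings from $X_N$ equals the number of outgoing $(-v)$-crossings: restricted to any line $y + \Z v$, the finite set $X_N \cap (y+\Z v)$ has as many $X_N \to X_N^c$ as $X_N^c \to X_N$ transitions. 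Thus all the quantities in question are dominated by $\F(X_N)/\min_{v \in \N}|V(v)|$, giving $\mathrm{Per}(F_N) \leq C\, N^{-\frac{d-1}{d}} \F(X_N) \leq C'$ under the assumed energy bound.

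The main obstacle is the telescoping argument relating general lattice crossings to crossings in directions from $\N$, where the spanning assumption $\mathrm{span}_\Z \N = \L$ is essential; once the uniform perimeter bound is in hand, compactness in $L^1_{loc}$ and finite perimeter of the limit follow from standard BV theory.
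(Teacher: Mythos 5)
Your proposal is correct and follows essentially the same route as the paper: the paper's proof also reduces to the uniform perimeter bound $P(E_N(X_N))\le K N^{-\frac{d-1}{d}}\F(X_N)$ via a combinatorial lemma (Lemma \ref{lemma:combinatorial}) that uses the spanning hypothesis to express each lattice generator as an integer combination of vectors in $\N$ and then applies the subadditivity rule $\#\big((X+a+b)\setminus X\big)\leq \#\big((X+b)\setminus X\big)+\#\big((X+a)\setminus X\big)$ — which is exactly the content of your telescoping path argument — before concluding by standard compactness for sets of finite perimeter. The only cosmetic difference is that the paper first reduces to $\L=\Z^d$, while you carry the matrix $M$ along and make explicit the equality of outgoing $+v$ and $-v$ crossings, which the paper leaves implicit.
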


As a consequence of Theorem \ref{thm:main} and Proposition \ref{prop:compactness} we obtain the following.

\begin{corollary}\label{cor:Wulff}
Minimizers of $\F_N$ converge locally in measure, up to rescaling and possibly a translation, to a finite perimeter set $E$ that minimizes \eqref{eq:defper} for its own volume constraint. 
More precisely (as explained in Section \ref{sec:wshapes}), in the case of the anisotropy \eqref{eq:phiV} this Wulff shape coincides with the Minkowski sum of segments given by 
\begin{equation}\label{ws13}
\mathcal{W}_{\phi_V}=\sum_{v\in\mathcal N}|V(v)|[-v,v].
\end{equation}
\end{corollary}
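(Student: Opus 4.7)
The plan is to combine Theorem \ref{thm:main} with Proposition \ref{prop:compactness} in the standard $\Gamma$-convergence-plus-compactness scheme for constrained minimization, and then to identify the resulting Wulff shape through the support-function/zonotope dictionary.

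Let $(X_N)$ be a sequence of minimizers of $\F_N$. Comparing with an explicit competitor (for instance $\L \cap B_R$ with $R = R_N$ chosen so the competitor has $N$ points) yields the a priori bound $\F(X_N) \leq C N^{(d-1)/d}$, so Proposition \ref{prop:compactness} supplies a subsequence with $E_{N_k}(X_{N_k}) \to E$ in $L^1_{loc}$ for some finite-perimeter set $E$. I expect the main obstacle here: since the convergence is only local in measure while the prescribed volume $|E_{N_k}(X_{N_k})| = \det\L$ must be preserved, mass could a priori escape to infinity. I would rule this out by translating each $X_{N_k}$ so that the sets are uniformly bounded, exploiting the uniform perimeter bound together with a concentration-compactness/isoperimetric argument to forbid the configuration from splitting into arbitrarily many distant clusters; a single well-chosen translation per $N_k$ then guarantees $|E| = \det\L$.

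With the volume constraint preserved, the $\Gamma$-liminf inequality of Theorem \ref{thm:main} together with the $\Gamma$-limsup construction applied to any competitor $\tilde E$ with $|\tilde E| = \det\L$ gives
\[
P_{\phi_V}(E) \leq \liminf_k N_k^{-(d-1)/d}\F_{N_k}(E_{N_k}(X_{N_k})) \leq P_{\phi_V}(\tilde E),
\]
where a negligible perturbation of the recovery sequence adjusts its cardinality to exactly $N_k$. Thus $E$ solves the anisotropic isoperimetric problem for $P_{\phi_V}$ at volume $\det\L$, and by uniqueness of minimizers in the Wulff problem, $E$ is, up to translation and dilation, the Wulff shape $\mathcal{W}_{\phi_V}$.

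For the identification in \eqref{ws13}: by Proposition \ref{prop:equivsymm} it suffices to work with symmetric $V$, for which $\N = -\N$, and pairing $v$ with $-v$ in \eqref{eq:phiV} yields
\[
\phi_V(\nu) = \frac{1}{2\det\L}\sum_{v\in\N}|V(v)|\,|\langle v,\nu\rangle|.
\]
Since $\nu\mapsto|\langle v,\nu\rangle|$ is the support function of $[-v,v]$ and support functions add under Minkowski sums, $\phi_V$ is the support function of the zonotope $\tfrac{1}{2\det\L}\sum_{v\in\N}|V(v)|[-v,v]$. Because the Wulff shape of an anisotropy coincides (up to normalization) with the convex body whose support function is that anisotropy, $\mathcal{W}_{\phi_V}$ equals this zonotope up to the scaling convention fixed in Section \ref{sec:wshapes}, giving \eqref{ws13}. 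The compactness/mass-conservation step is the only substantive difficulty; the zonotope identification is then essentially a one-line computation.
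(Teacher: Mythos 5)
Your overall scheme is the one the paper intends: the corollary is stated there without proof, as a direct consequence of Theorem \ref{thm:main} and Proposition \ref{prop:compactness}, and the identification \eqref{ws13} is exactly the computation of Section \ref{sec:wshapes} that you reproduce (namely $\mathcal{W}_{\phi_1+\phi_2}=\mathcal{W}_{\phi_1}+\mathcal{W}_{\phi_2}$ combined with $\mathcal{W}_{|\langle \cdot,v\rangle|}=[-v,v]$ and the symmetrization of Proposition \ref{prop:equivsymm}; the prefactor $\tfrac{1}{2\det\L}$ you carry along is harmless, being absorbed by the ``up to rescaling'' in the statement). The a priori bound by comparison with a lattice ball, and the adjustment of the recovery sequence to exactly $N$ points, are also fine.

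The gap is precisely the step you flag as the main obstacle. You assert that a concentration-compactness/isoperimetric argument plus one well-chosen translation per $N_k$ yields $|E|=\det\L$, but you do not supply that argument, and it is genuinely nontrivial: the paper itself remarks that sequences with equibounded energy can lose mass at infinity, and that for $d\ge 3$ a perimeter bound does not control the diameter even of connected configurations. This is why the corollary is worded as ``converge locally in measure \dots to a set that minimizes for its own volume constraint'' rather than claiming full mass conservation. Note moreover that even this weaker conclusion does not follow from the plain $\Gamma$-liminf inequality: if a fraction $m'=\det\L-|E|>0$ of the mass escapes, the chain $P_{\phi_V}(E)\le\liminf_k N_k^{-(d-1)/d}\F_{N_k}(E_{N_k}(X_{N_k}))\le g(\det\L)$, with $g$ the $\phi_V$-isoperimetric profile, does not give $P_{\phi_V}(E)\le g(|E|)$, since $g$ is increasing. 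What is needed is a splitting estimate of the form $\liminf_k N_k^{-(d-1)/d}\F_{N_k}\ge P_{\phi_V}(E)+g(m')$, after which the strict concavity of $t\mapsto t^{(d-1)/d}$ forces $m'=0$ (or $|E|=0$, to be excluded by the choice of translation). That nucleation-type lemma is the real mathematical content behind the corollary; as written, your proof --- like the paper's one-line assertion --- leaves it unproved.
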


\begin{remark}
\begin{itemize}
\item[$(i)$] In Section \ref{subsec:lowerdim}, in order to simplify the analysis in the case when $\N$ does not span $\Z^d$, we also introduce the following convergence, which has been widely used in the literature: given $X\subset \L$ we define the \textit{empirical measure}
\begin{equation}\label{eq:empiricaldef}
\mu_N(X):=\frac{1}{N}\sum_{x\in X}\delta_{x/N^{1/d}}.
\end{equation}
Then by definition $X_N$ converges to a set $E$ if the empirical measures $\mu_N(X_N)$ converge to $\1_E$ weakly as measures. Observe that, when considering subsets of $\Z^d$, this convergence is equivalent to the “$L^1_{loc}$” considered above, and to state and prove Theorem \ref{thm:main} we could equivalently use the functionals
\begin{equation}
    \F_N(\mu):=\begin{cases}
        \F(X) & \text{if $\mu=\mu_N(X)$ for some $X$, $\# X=N$}\\
        +\infty & \text{otherwise}\end{cases}.
\end{equation}
\item[$(ii)$] The restriction $\# X_N=N$ in \eqref{rescenergy} is not necessary, and Theorem \ref{thm:main} would be true even without it. However we chose to put it so that the proof of the recovery sequence becomes more precise (we can construct sets with \textit{exactly} $N$ points), and so that we can talk about minimizers of $\F_N$ (which without a cardinality constraint would be trivial) and thus state Corollary \ref{cor:Wulff}.
\end{itemize}
\end{remark}

A particular case of Theorem \ref{thm:main} appears in \cite{AFS} within the study of triangular-lattice configurations in the plane. This global convergence result for discrete energy functionals was successively made more quantitative near the minimum in \cite{schmidt}, who proved the $N^{3/4}$-law for fluctuations near the minimizer in the hexagonal case (see also \cite{mainini2019formula} for the $3$-dimensional case and \cite{cicalese2019maximal,mainini2020maximal} more in general). Results describing the structure of configurations minimizing important discrete functionals in an “unconstrained” setting, i.e. without restricting the configurations to a lattice, are available in very few cases, in dimensions $2,3,8,24$ in different models (see \cite{theil2006proof, flatley2015face, davoli2016wulff, mapiste, viazovska2017sphere,cohn2017sphere,cohn2019universal,bedepe}), and in these cases too, the optimal limit shape can be shown to coincide with the Wulff shape of the corresponding lattice. Of the above works, note that \cite{bedepe, cohn2019universal} work with a potential $V$ which involves interactions beyond nearest-neighbors, motivating our choice of including general $V$ in practice.

\medskip 

Our proof of the $\Gamma$-liminf inequality in Theorem \ref{thm:main} is based on splitting the contributions to the energy appearing in \eqref{eq:rescenergy} into contributions from the single edge directions in the support of $V$, and taking the limit on each one separately with the help of Reshetnyak's theorem. The $\Gamma$-limsup inequality is by polyhedral approximation, like the one performed in a special case in \cite{AFS}.

\medskip

One of the advantages of our method, especially for the $\Gamma$-liminf case, is that it has indicated us a strategy for treating the quasiperiodic case, via a relatively non-technical discussion. We expect that the same strategy can extend to more general quasicrystals and glass-like generalizations to configurations constructed from configurations of hypersurfaces.

\subsection{Setting and main results in the quasicrystal case}\label{ssec:qcsetup}

The term {\bf quasicrystal} refers to a class of generalized lattices that are not periodic, but possess some form of \textit{quasiperiodicity}. A great interest in these kinds of arrangements arose in crystallography in the 80's (see e.g. \cite{kramer1984periodic,duneau1985quasiperiodic,levine1986quasicrystals}), when it was famously observed by Shechtman \cite{shechtman1984metallic} that some metal alloys create diffraction patterns with five-fold symmetries that could not be explained by periodic arrangements of atoms. These patterns were then explained exactly by a “quasiperiodic” structure that never repeats but has atomic Fourier transform (thereby indicating some version of periodicity). We refer to \cite{senechal1996quasicrystals} for a mathematical introduction to quasicrystals. In the mathematical community the most famous quasiperiodic arrangement is arguably the Penrose tiling, a tiling of the plane created with the use of two kinds of rhombuses as described by De Bruijn in \cite{de1981algebraic}. An algebraic precursor of the idea of a quasicrystal can be traced back to Meyer \cite{meyer1972algebraic}. 

\medskip 
Quasicrystals can be satisfactorily modeled by a variety of alternative non-equivalent mathematical definitions, depending on the precise focus of a given model or theory, and we refer to \cite{lagarias1999geometric1,lagarias1999geometric2, GahRhy86} for a comparison between some (but not all) of the different possible definitions.

\medskip The choice of definition which allows to directly connect to the theorems in Section \ref{ssec:persetup} is the so-called “multigrid construction” of quasicrystals, introduced as far as we could find in De Bruijn \cite{de1981algebraic} in the second part of his paper, and extended in \cite{GahRhy86} to the setting considered here. Wulff shapes of quasicrystals have been compellingly characterized in the physics literature for example in \cite{Ho+89, IngSte89}, and thus our work here consists in writing complete proofs of the energy convergence which formalizes \cite{Ho+89, IngSte89} within the theory of $\Gamma$-convergence, and slightly generalizes the results to the full multigrid setup \cite{GahRhy86}.

Amongst other constructions of large classes of quasicrystals, we mention the \textit{cut-and-project} method, also formulated in \cite{de1981algebraic} for the Penrose tiling case. G\"ahler and Rhyner \cite{GahRhy86} extended the Penrose description from De Bruijn and proved that tilings by parallelohedra can be constructed by one method if and only if they can be constructed by the other.

\subsubsection{Energy and $\Gamma$-convergence result in the quasicrystal case} 
We now define perimeter energies on the space of finite unions of tiles in quasiperiodic tilings. We consider a given quasiperiodic tiling $\mathcal T$ of $\mathbb R^d$ by parallelotopes (“tiles”), which are produced through the “multigrid construction”. This means that the polyhedral complex of the tiling $\mathcal T$ is dual to the one formed by dissecting $\mathbb R^d$ by a number of families of parallel hyperplanes. We refer to Section \ref{ssec:multigrid} for the precise definitions, and to Figures \ref{fig:test1}, and \ref{fig:test2} for a simple example. We consider the energy of a set $T\subset\R^d$ which is a union of finitely many tiles from $\mathcal T$ as the following perimeter functional:
\[
    \mathcal E(T)=\int_{\partial T} w(\nu(x))d\mathcal H^{d-1}(x),
\]
where $\nu$ is the normal to $\partial T$, and where $w$ is a nonnegative weight function which is defined on the finitely many possible directions of $\nu$. The crucial point is that this functional can be rewritten in the form 
\begin{equation}\label{entileintro}
\mathcal{E}(T)=\mathcal{E}_{W}(T)=\sum_\nu W(\nu) \ 
    \sharp\{\mbox{facets of $\partial T$ with exterior normal $\nu$}\}
\end{equation}
for some non-negative potential $W$ (with a sign convention opposite to the crystal case), and where the sum runs among all the possible directions of $\nu$.

A more detailed description is given in Section \ref{ssec:energyqc}, in which formula \eqref{entileintro} is repeated in \eqref{entile} and reexpressed in a dual space in \eqref{entile2}. This rewriting makes it possible to interpret the perimeter-type functional \eqref{entileintro} as a superposition of interaction potentials of type \eqref{eq:rescenergy}, which we know how to handle. We then define
\begin{equation}\label{eq:FNtile}
        \mathcal F_N(E):=
        \begin{cases}
            \displaystyle N^{-\frac{d-1}{d}} \mathcal E_{W}(T) & 
            \text{if $T:=N^{\frac1d}E$ is a disjoint union of $N$ tiles from $\mathcal T$},\\
            +\infty & \text{otherwise}.
        \end{cases}
    \end{equation}
    and then we have the following analogue of Theorem \ref{thm:main}.
\begin{theorem}\label{thmqcintro}
    The functionals $\mathcal F_N$ defined in \eqref{eq:FNtile} $\Gamma$-converge, 
    with respect to the $L^1_{loc}$ topology, to the functional $P_{W}=P_{\phi_{W}}$, for $\phi_{W}$ of the same form as \eqref{eq:phiV} (see \eqref{phivnew} and the preceding discussion for the definition). Moreover, if $W(\nu)>0$ for every normal $\nu$ to a tile, sequences with equibounded energy are compact in $L^1_{loc}$.
    In particular, the minimizers $\overline{X}_N$ of $\mathcal E_{W}$ from \eqref{entileintro},
    amongst $N$-tile configurations converge, 
    up to rescaling, to a finite perimeter set $E$ that minimizes the anisotropic perimeter $P_{W}$.
\end{theorem}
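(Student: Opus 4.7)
The plan is to piggy-back on the proof of Theorem \ref{thm:main} by exploiting the rewriting of $\mathcal{E}_W$ already suggested by \eqref{entileintro} (and further refined in the dual-space formulation \eqref{entile2}). The key structural fact coming from the multigrid construction is that the facet normals appearing in any tile of $\mathcal{T}$ are exactly the finitely many normals $\nu_k$ ($k=1,\ldots,K$) of the $K$ grids, and two tiles of $\mathcal{T}$ share a facet with normal $\nu_k$ precisely when the corresponding chambers of the primal hyperplane arrangement are separated by a single hyperplane from the $k$-th family. Consequently $\mathcal{E}_W(T)$ splits as $\sum_k W(\nu_k)\cdot\#\{k\text{-facets of }\partial T\}$, and for each $k$ counting $k$-facets amounts to counting jumps of $\1_T$ across the $k$-th hyperplane family, i.e.\ a one-dimensional quantity of exactly the kind treated direction-by-direction in the proof of Theorem \ref{thm:main}.

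For the $\Gamma$-liminf, I would treat each index $k$ separately. Setting $T_N:=N^{1/d}E_N$, the rescaled number of $k$-facets in $\partial T_N$ equals a weighted count of jumps of $\1_{T_N}$ across the hyperplanes of the $k$-th grid. Since these hyperplanes are, by construction, asymptotically uniformly distributed with a well-defined density $\rho_k$, the same Reshetnyak-theorem argument as in Section \ref{sec:proof} applies, now using the hyperplane measures in place of the periodic-lattice measures, to give the lower bound $\rho_k \int_{\partial^* E}\langle \nu_k,\nu_E\rangle_+\,d\mathcal{H}^{d-1}$. Summing over $k$ produces exactly the integrand of $P_{\phi_W}$ with weights $\rho_k W(\nu_k)$, matching the form \eqref{eq:phiV} for $\phi_W$.

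For the $\Gamma$-limsup I would argue by polyhedral approximation, following the blueprint of Theorem \ref{thm:main}: first reduce by density to polyhedra $\Omega$ whose facets all have normals among the $\nu_k$, and then show that $N^{1/d}\Omega$ can be well-approximated by a disjoint union of whole tiles of $\mathcal{T}$, with tile count $N(1+o(1))$ and $k$-facet count equal, up to an error $o(N^{(d-1)/d})$, to the contribution of direction $k$ to $N^{(d-1)/d}P_{\phi_W}(\Omega)$. The boundary discrepancy lives in a strip of thickness comparable to the uniformly bounded maximal tile diameter provided by the multigrid construction, and is therefore negligible after rescaling. Compactness follows by the same scheme: when $W(\nu_k)>0$ for every $k$, $\mathcal{F}_N(E_N)\le C$ bounds the total number of facets of $\partial T_N$ and hence the perimeter of $T_N$, giving a uniform BV bound on the rescaled sets and thus an $L^1_{loc}$-convergent subsequence.

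The main obstacle is precisely the step where, in the crystal case, the periodic structure let us average over a fundamental cell: in the quasiperiodic case this must be replaced by the asymptotic densities $\rho_k$ of the multigrid hyperplanes, which requires a quantitative uniform-distribution statement for each grid together with a careful control of the combinatorics of the dual cell around every point. A secondary technical difficulty, in the $\Gamma$-limsup, is to prove that $\mathcal{T}$ is flexible enough to approximate arbitrary $\{\nu_k\}$-polyhedra with the correct number of facets in every direction; this relies on uniform bounds on tile shapes and on the multigrid intersection pattern being sufficiently regular on every macroscopic scale.
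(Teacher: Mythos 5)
Your high-level strategy (split the energy by facet direction, Reshetnyak for the liminf, polyhedral approximation for the limsup, a perimeter bound for compactness) is the same as the paper's, but the proposal leaves the genuinely hard part of the quasicrystal case unresolved, and it starts from a structural description that is incorrect for $d\ge 3$. First, the facet normals of the tiles are not the $K=\sharp\mathcal G$ grid normals: a facet of a parallelotope $P(x)$ is spanned by $d-1$ edge vectors $\widetilde g$, so facet normals are indexed by the $(d-1)$-subsets $J'_v\subset\mathcal G$, i.e.\ by the rail directions $v\in\mathcal V$ of \eqref{lines} (these coincide with single grid normals only when $d=2$). Moreover tiles correspond to \emph{vertices} of the multigrid arrangement, not to chambers, and two tiles share a facet exactly when the corresponding vertices are consecutive intersection points along a rail. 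Getting this combinatorics right is not cosmetic: it determines which one-dimensional counting problem you actually have to solve.

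Second, and more seriously, the step you defer (``the periodic averaging over a fundamental cell must be replaced by asymptotic densities $\rho_k$, which requires a quantitative uniform-distribution statement'') is precisely the content of the paper's proof, and your sketch gives no mechanism for it. Along a fixed rail the multigrid vertices are \emph{quasiperiodically} spaced, so the lattice argument of Section \ref{sec:proof} cannot be applied directly to the jump count. The paper resolves this by three ingredients you do not supply: (a) the decomposition of the multigrid vertex set $\mathcal X$ into the genuinely periodic sublattices $\Lambda_J$, $J\subset\mathcal G$, $\sharp J=d$ (equation \eqref{deflambdaj}); (b) the equidistribution statement, Lemmas \ref{lemma:density}--\ref{lemma:equisubs} and Proposition \ref{prop:L1corresp}, which shows that $L^1$ convergence of the rescaled tile unions in the primal space forces each sublattice piece $E_N^J$ to converge to $A^{-1}E$ in the dual space --- this is where ``uniform distribution'' is made rigorous, via the observation that the sum of the densities $\rho_J$ equals $1$ so each term must attain its upper bound; and (c) the combinatorial Lemma \ref{bondcount}, which bounds the sum of the periodic edge-perimeter counts $\sharp\mathrm{EP}_{J,v}$ by $(\sharp\mathcal G-d+1)\,\sharp\mathrm{EP}_v$ and whose multiplicity factor exactly cancels in the liminf. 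Finally, your candidate limit $\sum_k \rho_k W(\nu_k)\int\langle\nu_k,\nu_E\rangle_+$ omits the affine map $A=\mathbb G\widetilde{\mathbb G}^T$ from the bounded-distortion Lemma \ref{bda}: the rails in the primal space run in direction $Av$, not $v$ or $\widetilde v$, and the correct integrand \eqref{phivnew} involves $\langle\nu,Av\rangle_+$ together with the change of variables of Proposition \ref{prop:equivariance}. Without (a)--(c) and the map $A$, the argument does not close.
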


\subsection{The search of general Wulff shape constructions}\label{ssec:wshapeintro}
In both our main theorems, we find that the limit anisotropic perimeter functionals correspond to $\phi$ which is a sum of terms of the form $W(v)|\langle v, \cdot\rangle|$ with $W(v)>0$ (possibly take $W=-V$ for the crystal case). What are the possible Wulff shapes corresponding to these perimeters? Surprisingly, this natural question, which is thoroughly investigated in physics papers \cite{Ho+89, IngSte89}, does not seem to be well-studied in the mathematical literature. Therefore, in Section \ref{sec:wshapes} we collect and describe the basic results in this direction and provide a few new examples to illustrate some phenomena. It follows from classical convex geometry (see \cite{Sch14}) that if $\phi_{W}=\phi_{W_1}\pm\phi_{W_2}$ where $W_1,W_2$ are finitely supported potentials and $\phi_{W}$ is defined as in \eqref{eq:phiV}, then the Wulff shape $\mathcal{W}_{W}$ is the Minkowski sum/difference of the Wulff shapes of $\phi_{W_j},j=1,2$. Therefore for positive finitely supported $V$ the corresponding Wulff shape is a Minkowski sum of segments, sometimes named a \emph{zonotope}, and we directly have the following:

\begin{theorem}[Wulff shapes under constant-sign potentials]\label{prop-zonotope}
A set $\mathcal W\subset \mathbb R^d$ is obtainable as limit optimal shape from energies as in \eqref{eq:rescenergy} for nonpositive $V$ with finite support, or as in \eqref{entileintro} for positive $W$, if and only if $\mathcal W$ is a zonotope.
\end{theorem}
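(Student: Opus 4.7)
I would prove the two implications separately, with the key ingredient being the explicit formulas for the Wulff shapes provided by Corollary~\ref{cor:Wulff} and Theorem~\ref{thmqcintro}. Both constructions produce Wulff shapes via anisotropies of the form $\phi(\nu)=\sum_{v}c_v|\langle v,\nu\rangle|$ with $c_v>0$, and the content of the theorem is that such a form characterises zonotopes.

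For the \emph{only if} direction, Corollary~\ref{cor:Wulff} identifies the crystal Wulff shape as $\mathcal W_{\phi_V}=\sum_{v\in\mathcal N}|V(v)|[-v,v]$, a Minkowski sum of finitely many (symmetric) segments, hence a zonotope by definition. The analogous formula for $\phi_W$ stated in Theorem~\ref{thmqcintro} (explicitly said to have the same form as $\phi_V$) gives a Minkowski sum of segments indexed by the finite set of tile normals, hence again a zonotope.

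For the \emph{if} direction, suppose $\mathcal W=\sum_{i=1}^n a_i[-v_i,v_i]$ with $a_i>0$ and $v_i\neq 0$, centered at the origin after translation. I would distinguish two cases. \emph{Case 1 (commensurate directions).} If $\sum_i\mathbb Z v_i$ is a discrete subgroup of $\mathbb R^d$, set $\mathcal L$ to be this lattice (enlarging the vector collection by extra basis vectors if necessary to ensure it spans $\mathbb R^d$ and $\mathrm{span}_{\mathbb Z}\mathcal N=\mathcal L$). Define a symmetric nonpositive potential $V$ supported on $\mathcal N=\{\pm v_1,\ldots,\pm v_n\}$ with $|V(\pm v_i)|$ proportional to $a_i$; by Corollary~\ref{cor:Wulff} the resulting Wulff shape equals $\mathcal W$, up to a uniform scaling absorbed by the volume constraint. \emph{Case 2 (incommensurate directions).} Here no common lattice exists, and I would invoke the quasicrystal construction instead: by Section~\ref{ssec:multigrid}, one can build a multigrid tiling whose hyperplane families have prescribed normals $v_i$, and assigning weights $W(v_i)=a_i$ in \eqref{entileintro} yields the Wulff shape $\mathcal W$.

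The main point to verify carefully is the bookkeeping of normalization factors (notably the $1/\det\mathcal L$ in \eqref{eq:phiV} and its counterpart for tilings, together with the double-counting $v_i$ versus $-v_i$ when $V$ is assumed symmetric) so that $\mathcal W$ is recovered exactly up to homothety, consistent with the scale invariance inherent to Wulff shapes under varying volume constraints. The other, arguably harder, issue is to confirm in Case~2 that the multigrid construction of~\cite{GahRhy86} accommodates an arbitrary finite family of prescribed hyperplane normals with nondegenerate combinatorics; this should follow from the genericity built into that construction, but is the single step I would expect to require the most care.
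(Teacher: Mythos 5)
Your proposal follows essentially the same route as the paper: the \emph{only if} direction is exactly the paper's argument, namely that both limit anisotropies have the form $\sum_v c_v|\langle v,\cdot\rangle|$ with $c_v>0$, so that by \eqref{wulffsum} in Section \ref{sec:sumdiff} (i.e.\ \cite[Thm.~1.7.5]{Sch14} together with $\mathcal{W}_{|\langle \cdot,v\rangle|}=[-v,v]$) the Wulff shape is the Minkowski sum $\sum_v c_v[-v,v]$, hence a zonotope, and conversely every zonotope arises from such a $\phi$. Your handling of the converse is actually more explicit than the paper's, which asserts it ``directly'': the commensurate/incommensurate case split, and in particular the step you flag yourself --- verifying that a multigrid can be arranged so that its (primal) rail directions and positive weights reproduce an arbitrary prescribed finite set of zonotope generators --- is precisely the point the paper leaves implicit.
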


It is often claimed in the mathematical literature that this remains the case even for \emph{signed} finitely supported $W$. However we point out that (as observed amongst others by \cite{IngSte89} in the quasiperiodic case) this is not true. Indeed we can find examples of simple signed $W$, both coming from lattices and from multigrid quasicrystals, in which the Wulff shape is not a zonotope. This indicates that real-world crystalline shapes such as the pyritohedron or general truncated octahedra, are possible within our model, for signed $W$.

\medskip We leave as an interesting future direction the extension of $\Gamma$-convergence results such as Theorems \ref{thm:main} and \ref{thmqcintro} to the case of general signed $W$ (with $W=-V$ for Theorem \ref{thm:main}). We believe that the results remain true whenever $W$ is such that $\phi_{W}$ is strictly positive on the unit sphere. The techniques considered here need to be refined for that case and we leave the extension to future work.

\subsection{Structure of the paper}
In Section \ref{sec:reduction} we prove equation \eqref{chcoordaff} which allows us to change coordinates and reduce the study of lattice energies to the case of $\mathbb Z^d$. Section \ref{sec:proof} is devoted to the crystal case, with the proof of Theorem \ref{thm:main} and of Proposition \ref{prop:compactness}, as well as to some degenerate analogues, described in Section \ref{subsec:lowerdim}. Section \ref{sec:qc} is devoted to the quasicrystal case, with the proof of Theorem \ref{thmqc}. Section \ref{sec:wshapes} is devoted to the study of possible Wulff shapes that can appear as continuum minimizers for the limit energies from our main theorems. It also describes the first steps for the study of Wulff shapes for signed potentials $W$. Finally, Section \ref{sec:finalrem} includes sketches of some direct generalizations of our results and a short discussion of what seem interesting open directions for future work.

\paragraph{Acknowledgements.} The authors wish to thank Maria Stella Gelli for pointing out that Theorem \ref{thm:main} was a specialization of previous results \cite{BraGel, AliGel, Gel}. GDN has received funding from the European Research Council (ERC) under the European Union’s Horizon 2020 research and innovation programme under grant agreement No 757254 (SINGULARITY). MP is supported by the Chilean Fondecyt Iniciaci\'on grant number 11170264 entitled “Sharp asymptotics for large particle systems and topological singularities”.

\section{Lattice case}\label{sec:proof}
In this Section we prove Theorem \ref{thm:main}. We first prove in Subsection \ref{sec:reduction} that we can reduce to the case of $\mathcal{L}=\Z^d$. In Subsection \ref{subsec:splitting} we split the energy according to the direction of the bonds appearing in \eqref{eq:rescenergy}. We then relate each of these energies to a suitable anisotropic perimeter of a certain set associated to $X_N$. In this way we can rewrite the total energy $\E_N$ as the superposition of anisotropic perimeters in different directions $v$ (those for which $V(v)$ is non zero) of certain approximations of $X_N$ as union of cylinders with axis along $v$. This will help us deduce in Subsection \ref{latliminf} the $\liminf$ inequality from the lower semicontinuity of perimeter-type functionals. Then in Subsection \ref{sec:glimsup1} we prove the $\limsup$ inequality by approximation with polyhedral sets through a direct construction.
\subsection{Reduction to the integer lattice}\label{sec:reduction}
Every lattice $\L$ can be expressed as $\L=M\Z^d$ with $M\in GL(d)$. Moreover there is a direct correspondence between configurations in $\L$ and in $\Z^d$:
\[
\begin{tabular}{ccc}
    $\Z^d=M^{-1}\L$ & $\longleftrightarrow$ & $\L$\\
    $\widetilde \N:=M^{-1}\N$ & $\longleftrightarrow$ & $\N$\\
    $\widetilde X_N:=M^{-1}X_N$ & $\longleftrightarrow$ & $X_N$\\
    $\widetilde V:=V\circ M$& $\longleftrightarrow$  & $V$\\
    $M^{-1}E_h\overset{L^1}{\to}M^{-1}E$ & $\longleftrightarrow$ & $E_h\overset{L^1}{\to}E$\\
    $\widetilde E=M^{-1}E$ & $\longleftrightarrow$ & $E$
\end{tabular}
\]
Then $\E_{\widetilde V}(\widetilde X_N)=\E_V(X_N)$ and thus to find the $\Gamma$-limit for a general lattice $\L$ we can translate the problem in $\Z^d$, find the $\Gamma$-limit there, and then go back to the original lattice. The following result shows that when translating the problem from $\Z^d$ to any lattice $\L$, the perimeter functional $P_V$ in \eqref{eq:defper} behaves well.

\begin{proposition}[Equivariance under linear mappings]\label{prop:equivariance} Given $M\in GL(d)$ with $\det M>0$ and $P_V$ as defined by \eqref{eq:defper} and \eqref{eq:phiV}, we have
\[
\frac{1}{\det M}P_V(E)=P_{V\circ M} (M^{-1}E).
\]
\end{proposition}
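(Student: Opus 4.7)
The plan is to prove the identity by a direct change of variables in the surface integral defining the right-hand side, reducing it to an integral over $\partial^*E$. Three ingredients enter the argument. First, a standard fact about how the reduced boundary and its outer normal transform under the linear diffeomorphism $M^{-1}$: one has $\partial^*(M^{-1}E)=M^{-1}\partial^*E$, and at corresponding points
\begin{equation*}
\nu_{M^{-1}E}(M^{-1}x)=\frac{M^T\nu_E(x)}{|M^T\nu_E(x)|},
\end{equation*}
since the tangent space is mapped by $M^{-1}$ and the normal direction is perpendicular to it up to normalisation. Second, the $(d-1)$-dimensional surface Jacobian of $M^{-1}$, restricted to a hypersurface with unit normal $\nu$, equals $|M^T\nu|/\det M$; this is classical, following for example from Binet--Cauchy applied to a tangent frame, or from splitting $|\det M^{-1}|$ into a transverse stretch $1/|M^T\nu|$ and the surface stretch.

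With these in hand, the change of variables $x=My$ inside the definition of $P_{V\circ M}(M^{-1}E)$ yields
\begin{equation*}
P_{V\circ M}(M^{-1}E)=\frac{1}{\det M}\int_{\partial^*E}\phi_{V\circ M}\bigl(M^T\nu_E(x)\bigr)\,d\H^{d-1}(x),
\end{equation*}
where the factor $|M^T\nu_E|$ produced by the surface Jacobian is absorbed inside $\phi_{V\circ M}$ using its positive $1$-homogeneity.

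The final step is an algebraic identity matching $\phi_{V\circ M}(M^T\nu)$ with $\phi_V(\nu)$, read off from \eqref{eq:phiV}. Using $\det\Z^d=1$, the correspondences $\widetilde\N=M^{-1}\N$ and $\widetilde V=V\circ M$ recorded at the start of Section~\ref{sec:reduction}, together with the identity $\langle M^{-1}v,M^T\nu\rangle=\langle v,\nu\rangle$ (a consequence of $(M^{-1})^TM^T=I$), one computes
\begin{equation*}
\phi_{V\circ M}(M^T\nu)=\sum_{v\in\N}|V(v)|\,\langle v,\nu\rangle_+=(\det\L)\,\phi_V(\nu).
\end{equation*}
Substituting this back into the previous display and collecting the determinant factors yields the claim.

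The only substantive obstacle is keeping track of the three $\det M$-factors that appear in the argument, namely the one from the $\H^{d-1}$-Jacobian, the one from the scaling rule for $\phi_{V\circ M}(M^T\nu)$, and the one built into the normalisation $1/\det\L$ in \eqref{eq:phiV}; no analytic difficulty arises beyond the standard change-of-variables formula for Hausdorff measures under linear maps.
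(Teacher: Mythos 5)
Your argument follows the same route as the paper's own proof: push the boundary integral through $M^{-1}$ via the area formula, identify the image normal as $M^T\nu_E/|M^T\nu_E|$, compute the tangential Jacobian, and absorb the normalising factor $|M^T\nu_E|$ by positive $1$-homogeneity. All of these steps are correct, and your derivation of the surface Jacobian $J^{\nu^\perp}M^{-1}=|M^T\nu|/\det M$ (splitting $|\det M^{-1}|$ into a transverse stretch $1/|M^T\nu|$ times the tangential stretch, using $\langle M^{-1}\nu,M^T\nu\rangle=1$) is in fact cleaner than the paper's more elementary computation with parallelotopes.

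However, the final step does not close as written. You correctly obtain $\phi_{V\circ M}(M^T\nu)=\sum_{v\in\N}|V(v)|\langle v,\nu\rangle_+=(\det\L)\,\phi_V(\nu)$, and since $\det\L=\det M$ (the paper defines $\det\L=|\det M|$ for $\L=M\Z^d$, and $\det M>0$ is assumed), substituting this into your display gives $P_{V\circ M}(M^{-1}E)=\frac{\det M}{\det M}\,P_V(E)=P_V(E)$, not $\frac{1}{\det M}P_V(E)$. So ``collecting the determinant factors'' yields an identity that differs from the stated one by a factor of $\det M$. The source of the mismatch is the normalisation $1/\det\L$ in \eqref{eq:phiV}: the proposition as stated is the correct transformation rule for the \emph{unnormalised} integrand $\sum_{v\in\N}|V(v)|\langle v,\cdot\rangle_+$, and that is what the paper's own proof implicitly uses (its chain of equalities ends with $\sum_{w\in\N}|V(w)|\langle\nu_E,w\rangle_+=\phi_V(\nu_E)$, which is only valid if the $1/\det\L$ prefactor is dropped, i.e.\ when $\det\L=1$). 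You should either carry out the proof for the unnormalised $\phi$, or state explicitly that with the literal definition \eqref{eq:phiV} the conclusion becomes $P_{V\circ M}(M^{-1}E)=P_V(E)$; as written, your closing sentence asserts an arithmetic that does not hold.
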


\begin{proof}
We apply the area formula \cite[Thm.~2.91]{AFP} to the map $M^{-1}$ and the $(n-1)$-rectifiable set $\partial^* E$:
\begin{equation}\label{eq:equiv1}
    \int_{\partial^* \widetilde E} \phi_{\widetilde V}(\nu_{\widetilde E}(x))d\H^{d-1}(x)= \int_{\partial^* E} \phi_{\widetilde V}(\nu_{\widetilde E}(M^{-1}y)) (J^{\nu_{ E}(y)^\perp} M^{-1}) \,d\H^{d-1}(y),
\end{equation}
where $J^{\nu_{ E}(y)^\perp} M^{-1}$ is the Jacobian determinant of $M^{-1}$ restricted to the hyperplane orthogonal to $\nu_E(y)$.

\medskip

Now we claim that $\nu_{\widetilde E}(M^{-1}y)=\frac{M^*\nu_{E}(y)}{|M^*\nu_{E}(y)|}$. Indeed, choose a basis $e_1,\ldots, e_{d-1}$ for the tangent space to $\widetilde E$, which is equal to $\nu_{\widetilde E}^\perp$. Then $Me_1,\ldots, Me_{d-1}$ is a basis for the tangent space to $E$, which coincides with $\nu_{E}^\perp$. Therefore we have $0=\langle \nu_{E},Me_i\rangle=\langle M^*\nu_E,e_i\rangle$. As $M^*\nu_E$ is orthogonal to $e_1,\ldots,e_{d-1}$, it must be a multiple of $\nu_{\widetilde E}(M^{-1}y)$, as desired. We thus obtain that \eqref{eq:equiv1} equals
\[
\int_{\partial^* E} \phi_{\widetilde V}\left(\frac{M^*\nu_{ E}(y)}{|M^*\nu_{ E}(y)|}\right) (J^{\nu_{ E}(y)^\perp} M^{-1}) \,d\H^1(y).
\]
We have
\begin{align*}
    \phi_{\widetilde V}(M^*\nu_E)&=\sum_{\widetilde w\in \widetilde\N}|\widetilde V(\widetilde w)|\langle M^*\nu_E, \widetilde w\rangle_+=\sum_{\widetilde w\in \widetilde \N}|V(M\widetilde w)| \langle\nu_E, M\widetilde w\rangle_+\\
    &=\sum_{ w\in \N}|V(w)|\langle\nu_E, w\rangle_+=\phi_V(\nu_E)
\end{align*}
thus
\begin{equation}\label{eq:cov}
\int_{\partial^* \widetilde E} \phi_{\widetilde V}(\nu_{\widetilde E}(x))d\H^1(x)=\int_{\partial^*  E} \phi_{V}(\nu_{E}(y))\frac{(J^{\nu_{ E}(y)^\perp}M^{-1})}{|M^*\nu_E(y)|}d\H^1(y).
\end{equation}
We now want to prove that
\begin{equation}\label{eq:detidentity0}
\frac{(J^{\nu_{ E}(y)^\perp}M^{-1})}{|M^*\nu_E(y)|}=\frac{1}{\det M}.
\end{equation}

We first claim that
\begin{equation}\label{eq:detidentity}
J^{\nu_{\widetilde E}^\perp} M=\frac{\det M}{|\pi_{\nu_E}(M\nu_{\widetilde E})|}.
\end{equation}

The tangential jacobian $J^V M$ of a linear map $M$ with respect to a hyperplane $V$ can be computed in the following way: consider an $(n-1)$-dimensional unit cube $Q$ inside $V$, then $J^VM=\H^{n-1}(MQ)$. Consider now a cube expressed as a Minkowski sum $Q'=Q + [0,\nu_V]$, where $\nu_V$ is the normal to $V$. Then $\det M=\H^n(MQ')=|\pi_{(MV)^\perp}M\nu_V| \, \H^{n-1}(MQ)$. From this \eqref{eq:detidentity} follows as a special case with $V=\nu_{\widetilde E}^\perp$.

Now we prove identity \eqref{eq:detidentity0}. 
First of all we compute $|\pi_{\nu_E}(M\nu_{\widetilde E})|$. We use the fact that $\nu_{\widetilde E}(M^{-1}y)=\frac{M^*\nu_E(y)}{|M^*\nu_E(y)|}$ and the fact that $\langle M\nu_{\widetilde E}, (M^*)^{-1}\nu_{\widetilde E}\rangle = \langle \nu_{\widetilde E},M^*(M^*)^{-1}\nu_{\widetilde E} \rangle=1$:
\begin{align*}
    \pi_{\nu_E}(M\nu_{\widetilde E})&=\big\langle M\nu_{\widetilde E}, \nu_E \big\rangle \nu_E\\
    &=\left\langle M\nu_{\widetilde E}, \frac{(M^*)^{-1}\nu_{\widetilde E}}{|(M^*)^{-1}\nu_{\widetilde E}|} \right\rangle\frac{(M^*)^{-1}\nu_{\widetilde E}}{|(M^*)^{-1}\nu_{\widetilde E}|}\\
    &=\frac{(M^*)^{-1}\nu_{\widetilde E}}{|(M^*)^{-1}\nu_{\widetilde E}|^2}
\end{align*}
and thus $|\pi_{\nu_E}(M\nu_{\widetilde E})|=\frac{1}{|(M^*)^{-1}\nu_{\widetilde E}|}$. Therefore, using also the fact that $\nu_{\widetilde E}=\frac{M^*\nu_E}{|M^*\nu_E|}$ implies $|(M^*)^{-1}\nu_{\widetilde E}||M^*\nu_E|=1$, we get:
\[
J^{\nu_{\widetilde E}^\perp} M=\frac{\det M}{|\pi_{\nu_E}(M\nu_{\widetilde E})|}=\det M \,|(M^*)^{-1}\nu_{\widetilde E}|=\frac{\det M}{|M^*\nu_E|}.
\]
Due to the fact that restriction to a subspace and inverse commute for invertible maps, we also find
\[
(J^{\nu_{ E}(y)^\perp}M^{-1})=(J^{\nu_{\widetilde E}^\perp} M)^{-1}
\]
and therefore
\[
\frac{(J^{\nu_{ E}(y)^\perp}M^{-1})}{|M^*\nu_E(y)|}=\frac{1}{J^{\nu_{\widetilde E}^\perp} M \,|M^*\nu_E|}=\frac{1}{\det M}.
\]
This proves the claimed identity \eqref{eq:detidentity0}.
Equation \eqref{eq:cov} thus becomes
\begin{equation}\label{chcoordaff}
    \int_{\partial^* \widetilde E} \phi_{\widetilde V}(\nu_{\widetilde E}(x))d\H^{d-1}(x)=\frac{1}{\det M}\int_{\partial^*  E} \phi_{V}(\nu_{E}(y))d\H^{d-1}(y).
\end{equation}

This concludes the proof.
\end{proof}

A direct consequence of the previous result is that we can reduce to the case of $\Z^d$, and if we prove the $\Gamma$-convergence on $\Z^d$ we automatically prove it for every lattice.

\begin{remark} 
A version of \eqref{chcoordaff} can be proved via Minkowski sums if $\phi_V$ is a positively $1$-homogeneous convex functional, in a more straightforward way (see for example \cite[end~of Proof~of~Thm.~1.1]{FMP}). The above computations are motivated by the wish to be able to consider more general functions $\phi_V$, for which the only requirement is to be positively $1$-homogeneous, in future works.
\end{remark}

\subsection{Splitting of the energy with respect to sublattices}\label{subsec:splitting}

\subsubsection{Splitting of the lattices}
Given a vector $v\in \Z^d$, we consider the set
\[
\L_{v^\perp}=\{w\in \Z^d: \langle w, v\rangle=0\}.
\]
This is a sublattice of $\Z^d$ of dimension $d-1$ (see e.g. \cite[Ch. VII.2]{bourbaki1998general}), lying in the linear subspace $(\R v)^\perp$. As a lattice of such subspace, we can select a basis $b_1,\ldots, b_{d-1}$ and define its fundamental cell
\[
U_{\L_{v^\perp}}:=\left\{\sum_{i=1}^{d-1} t_ib_i: t_i\in[0,1)\quad\forall i=1,\ldots,d-1\right\}.
\]

We finally consider the sublattice 
\[
\L_v:=\L_{v^\perp}\oplus \mathrm{span}_{\Z}v.
\]
and its fundamental cell
\[
U_v:=U_{\L_{v^\perp}}+[0,v)=\left\{t_d v+\sum_{i=1}^{d-1} t_ib_i: t_i\in[0,1)\quad\forall i=1,\ldots,d\right\}.
\]
Given a translation vector $\tau\in \Z^d\cap U_v$ we also define the translated sublattices $\L^{v,\tau}:=\tau+\L_v$. For fixed $v$, a volume estimate using the fundamental cells gives that the number of distinct sublattices of the form $\L^{v,\tau}$ is given by
\[
\# (\Z^d\cap U_v)= \H^d(U_v)=\H^{d-1}(U_{\L_{v^\perp}})|v|.
\]

\subsubsection{Splitting of the energy}\label{sec-spliten}
For every fixed $v\in \Z^d\setminus \{0\}$ and every fixed translation vector $\tau\in\Z^d$ we define an energy functional by setting for $X\subset\L$
\begin{equation}\label{eq:energysplit}
\F^{v,\tau}(X):=-\sum_{x\in X\cap \L^{v,\tau}} \sum_{x'\in \L^{v,\tau}\setminus X} V(x'-x),
\end{equation}
which yields a decomposition of the total energy as
\[
\F(X)=\sum_{v\in \Z^d\setminus\{0\}} \sum_{\tau \in \Z^d\cap U_v}\F^{v,\tau}(X).
\]
We observe that for every fixed $v$ and $\tau$, the only terms appearing in $\F^{v,\tau}$ are of the type $V(v)$, so that effectively
\[
\F^{v,\tau}(X)=-V(v) \#\big\{(x,x'):x\in \L^{v,\tau}\cap X ,\, x'\in \L^{v,\tau}\setminus X\big\}.
\]

\subsubsection{Relation with anisotropic perimeter}\label{ssec:anisoper}
Now for every given $v$ and $\tau$ we associate to $X$ a set $E^{v,\tau}(X)$, made of translated copies of $U_v$, and we relate the energy $\F^{v,\tau}(X)$ to a suitably defined anisotropic perimeter in direction $v$ of $E^{v,\tau}$.

More precisely, to every configuration $X\subset \Z^d$, and to every fixed $v\in \Z^d\setminus\{0\}$ and $\tau \in \Z^d$, we associate the following set and its contraction by a factor of $N^{\frac1d}$:
\begin{equation}\label{evtau}
E^{v,\tau}(X):=\bigcup_{x\in \L^{v,\tau}\cap X}\left( x+U_v\right),\quad\mbox{and}\quad E_N^{v,\tau}(X):=N^{-\frac1d}E^{v,\tau}(X).
\end{equation}
Every paralelotope $x+U_v$ in the above definition of $E_N^{v,\tau}(X)$ has exactly one face for which normal vector $\nu$ there holds $\langle v, \nu\rangle_+> 0$, and $v=|v|\nu$ for this face. Furthermore, such face has area $\H^{d-1}(U_{\L_{v^\perp}})$ and all other faces are parallel to $v$. Hence the contribution in \eqref{eq:energysplit} coming from bonds contained in $\L^{v,\tau}$ can be expressed as an anisotropic area functional in direction $v$, as $\F^{v,\tau}(X)=P^v(E^{v,\tau}(X))$, where for finite-perimeter $E\subset R^d$ we define $P^v(E)$ as follows, $\nu(x)$ being the normal to the essential boundary $\partial^* E$ (cf. \cite[Definition~3.60]{AFP}):
\begin{equation}\label{eq:Pv} P^v(E):=\int\limits_{\partial^* E}\frac{-V(v)}{\H^{d-1}(U_{\L_{v^\perp}})}\left\langle\nu(x), \frac{v}{|v|}\right\rangle_+d\H^{d-1}(x).
\end{equation}
For a general measurable $E\subset\R^d$ and $v,\tau$ as above we define:
\begin{equation}
\F_N^{v,\tau}(E):=\left\{
    \begin{array}{ll}
        \F^{v,\tau}(X_N)&\mbox{ if }\exists X_N\subset\L,\quad \sharp X_N=N, E_N^{v,\tau}(X_N)=E,\\[3mm]
        +\infty&\mbox{ else.}
    \end{array}
\right.
\end{equation}
This functional automatically satisfies $\F_N(E)=\sum_{v\in \Z^d\setminus\{0\}} \sum_{\tau \in \Z^d\cap U_v}\F_N^{v,\tau}(E)$ for all measurable $E\subset \mathbb R$ (recall \eqref{eq:rescenergy}), and will be used in the proof of $\Gamma$-convergence.

\subsection{Liminf inequality}\label{latliminf}
We now put together two facts:
\begin{itemize}
    \item By Reshetnyak's Theorem \cite[Thm.~2.38]{AFP}, functionals as in \eqref{eq:Pv} are lower semicontinuous  with respect to the $L^1_{loc}$ convergence of sets, since the $1$-homogeneous extension of the integrand from \eqref{eq:Pv} is a convex function.
    \item If $E_N\to E$ in $L^1_{loc}$, where $E_N=E_N(X_N)$ for suitable $X_N\subset \L$ with $\sharp X_N=N$ (see definition \eqref{eq:ENdef}), then for every $v\in \Z^d\setminus\{0\}$ and $\tau\in \Z^d$, due to definition \eqref{evtau} we have $E_N^{v,\tau}={N^{-\frac{1}{d}}}E^{v,\tau}\to E$ in $L^1_{loc}$ as well.
\end{itemize}
From these two facts, given any sequence $X_N$ with $N=\sharp X_N\to\infty$ such that $E_N(X_N)\to E$ in $L^1_{loc}$ we obtain that (with $P^v$ defined in \eqref{eq:Pv})
\begin{align*}
P_V(E)&=\sum_{v\in\Z^d\setminus\{0\}}\H^{d-1}(U_{\L_{v^\perp}})|v|P^v(E)=\sum_{v\in\Z^d\setminus\{0\}} \sum_{\tau\in \Z^d\cap U_v} P^v(E)\\
&\leq \liminf_{N\to\infty}\sum_{v\in\Z^d\setminus\{0\}} \sum_{\tau\in \Z^d\cap U_v} P^v(E_N^{v,\tau}(X_N))\\
&=\liminf_{N\to\infty} N^{-\frac{d-1}{d}}\sum_{v\in\Z^d\setminus\{0\}}\sum_{\tau\in \Z^d\cap U_v} P^v(E^{v,\tau}(X))\\
& =\liminf_{N\to\infty} N^{-\frac{d-1}{d}}\sum_{v\in\Z^d\setminus\{0\}}\sum_{\tau\in \Z^d\cap U_v} \F_N^{v,\tau}(E_N(X_N))\\
&=\liminf_{N\to\infty} N^{-\frac{d-1}{d}}\F_N(E_N(X_N)).
\end{align*}
This proves the $\liminf$ inequality.

\subsection{Limsup inequality}\label{sec:glimsup1}

\begin{lemma}\label{lem-plane}
Let $\nu\in \mathbb S^{d-1}$ and let $P$ be a polytope in $\nu^\perp$ and $v\in \mathcal N, \tau \in U_v$ fixed. Then the number of edges of the form $[x,x+v), x\in\L^{v,\tau}$ that cross the subset $\mathrm{Int}[P,\L_{v,\tau}]\subset P$ given by
\begin{multline}\label{intperp}
\mathrm{Int}[P,\L_{v,\tau}]:=\\
\bigcup\left\{((U_v^\perp+\tau+x)+\R v)\cap P:\ x\in \L_{v^\perp}, ((U_v^\perp+\tau+x)+\R v)\cap \partial P=\emptyset\right\}
\end{multline}
in such a way that they have positive scalar product with $\nu$, is equal to 
\begin{equation}\label{biperp}
    \frac{\left\langle \nu,\frac{v}{|v|}\right\rangle_+}{\mathcal H^{d-1}(U_{v^\perp})}\mathcal H^{d-1}(\mathrm{Int}[P,\L_{v^\perp}]).
\end{equation}
Furthermore, we have 
\begin{equation}\label{arebound}
\mathcal H^{d-1}(P\setminus \mathrm{Int}[P,\L^{v,\tau}])\le C_d\frac{\mathrm{diam}(U_{v^\perp})}{\langle \nu, v/|v|\rangle}\mathcal H^{d-2}(\partial P)
\end{equation}.
\end{lemma}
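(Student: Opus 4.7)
My plan is to decompose $\L^{v,\tau}=\tau+\L_{v^\perp}+\Z v$ and turn the edge count into a cell count in the hyperplane $\nu^\perp\supset P$. First I dispose of the case $\langle v,\nu\rangle\le 0$: no edge in direction $v$ then has positive scalar product with $\nu$, and $\langle\nu,v/|v|\rangle_+=0$, so both sides of \eqref{biperp} vanish. Assume henceforth $\langle v,\nu\rangle>0$. For each $w\in\L_{v^\perp}$ the edges $[\tau+w+kv,\tau+w+(k+1)v)$, $k\in\Z$, partition the line $\ell_w:=\tau+w+\R v$, which meets $\nu^\perp$ in a single point $p(w)$; exactly one edge contains $p(w)$, so the quantity in \eqref{biperp} equals the number of $w\in\L_{v^\perp}$ such that $p(w)\in\mathrm{Int}[P,\L^{v,\tau}]$. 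By definition, $\mathrm{Int}[P,\L^{v,\tau}]$ is the disjoint union of the ``cells'' $C_w:=((U_{v^\perp}+\tau+w)+\R v)\cap\nu^\perp$ taken over those $w$ whose cylinder avoids $\partial P$, so the count equals $\H^{d-1}(\mathrm{Int}[P,\L^{v,\tau}])$ divided by the common $(d-1)$-volume of a single cell.

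The crux is to compute this cell area. Because $v\perp v^\perp$, the projection along $v$ onto $v^\perp$ coincides with the orthogonal projection, and restricts to a linear bijection $\pi_v:\nu^\perp\to v^\perp$ sending each $C_w$ onto a translate of $U_{v^\perp}$. Its Jacobian as a linear map between hyperplanes equals the cosine of the angle between their normals, namely $\langle\nu,v/|v|\rangle$, so each cell has $(d-1)$-volume $\H^{d-1}(U_{v^\perp})/\langle\nu,v/|v|\rangle$. Plugging this back into the count above yields \eqref{biperp}.

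For \eqref{arebound}, the complement $P\setminus\mathrm{Int}[P,\L^{v,\tau}]$ is covered by those cells $C_w$ whose defining cylinder meets $\partial P$. The inverse $\pi_v^{-1}:v^\perp\to\nu^\perp$ has singular values $1/\langle\nu,v/|v|\rangle,1,\ldots,1$, so each such cell has diameter at most $\mathrm{diam}(U_{v^\perp})/\langle\nu,v/|v|\rangle$ and therefore lies in the tubular neighborhood of $\partial P$ (inside $\nu^\perp$) of that width. The bound \eqref{arebound} then follows from the standard polytope estimate $\H^{d-1}(\{x\in\nu^\perp:\dist(x,\partial P)\le r\})\le C_d\, r\,\H^{d-2}(\partial P)$. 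The main technical obstacle is correctly producing the factor $1/\langle\nu,v/|v|\rangle$ in both the cell area and the cell diameter via the Jacobian/singular-value analysis of $\pi_v$; once those geometric identities are in hand, the rest is direct counting and a routine covering argument.
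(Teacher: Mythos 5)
Your proof is correct and follows essentially the same route as the paper's: reduce the edge count to a count of cells $C_w$ tiling $\nu^\perp$, identify the cell area as $\mathcal H^{d-1}(U_{v^\perp})/\langle\nu,v/|v|\rangle$ (the paper's identity \eqref{jacob}, which you obtain via the cosine Jacobian of the orthogonal projection between hyperplanes rather than the paper's parallelotope-slicing argument), and bound the exceptional set by a tubular neighborhood of $\partial P$ of width $\mathrm{diam}(U_{v^\perp})/\langle\nu,v/|v|\rangle$. The only (shared, harmless) imprecision is that the tubular-neighborhood estimate should be applied to the part of the neighborhood inside $P$, which is all that \eqref{arebound} requires.
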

\begin{proof}
We start by proving \eqref{biperp}. If $\langle \nu,v\rangle\le 0$ then no bonds $[x,x+v),x\in \L^{v,\tau}$ cut the hyperplane $\nu^\perp$ in a direction making positive scalar product with $\nu$, and also \eqref{biperp} gives zero contribution, thus the thesis follows. From now on we concentrate on the case $\langle \nu, v\rangle>0$. In this case an edge $[x,x+v)$ as above intersects $\nu^\perp$ if and only if it intersects it while having positive scalar product with $\nu$.

\medskip

We pass to a model case for clarity first: There exists an invertible affine map $x\mapsto Ax-\tau$ which sends $\L^{v,\tau}$ to $\Z^d$ and sends the $v+\tau$ to $e_1$. The union of all edges of the form $[x,x+v), x\in \L^{v,\tau}$ is sent by this map to the countable union of lines $\R\times\Z^{d-1}$ and the hyperplane $\nu^\perp$ is sent to a hyperplane transverse to all these lines since $\nu^\perp\nparallel v$. The hyperplane $A\nu^\perp -\tau$ then meets each line $\R\times \Z^{d-1}$ exactly once. Coming back to the original coordinates, we find that the number of intersections of a set in $\nu^\perp$ with segments $[x,x+v),x\in\L^{v,\tau}$ is equal to the number of intersections with the lines $\L_{v^\perp}+v\R +\tau$. In the case of the set $\mathrm{Int}[P,\mathcal L^{v,\tau}]$ this number is also equal to the number of cylinder sets $(U_v^\perp+\tau+x)+\R v$ appearing in the union \eqref{intperp}, since each such cylinder set contains exactly one such line.  We now claim that for each $y\in\R^d$ there holds
\begin{equation}\label{jacob}
\mathcal H^{d-1}(((U_v^\perp+y)+\R v)\cap \nu^\perp)=\frac{\mathcal H^{d-1}(U_{v^\perp})}{\left\langle \nu,\frac{v}{|v|}\right\rangle_+}.
\end{equation}
We can apply the above for $y=x+\tau$ corresponding to the ters in \eqref{intperp} to obtain \eqref{biperp} by $(d-1)$-dimensional volume comparison. To prove \eqref{jacob}, we proceed by an elementary reasoning, although faster alternative proofs are possible. Note that it suffices to prove the case $y=0$. Then observe that $\langle \nu, v\rangle_+=\langle \nu,v\rangle$ in our case. If we cut the infinite cylinder $U_v^\perp +\R v$ into equal parallelotopes by hyperplanes orthogonal to $\nu$ passing through the equally spaced points $\Z \frac{v}{|v|}$, then the average number of parallelotopes per unit length in the direction $v/|v|$ is the same as if we had cut it by hyperplanes orthogonal to $v$, thus the cut-out parallelotopes have equal volumes in the two cases. Since the volume of a parallelotope is equal to its basis area times the height relative to that basis, we get \eqref{jacob} directly.

\medskip

We now prove \eqref{arebound}. To do this, we note that any cylinder $U_{v^\perp}+y +\R v$ that meets $\partial P$ must also be included in its $R$-neighborhood for $R$ larger or equal to the diameter of the intersection of $U_{v^\perp}+\R v$ with $\nu^\perp$. The latter is bounded by $R:=\frac{\mathrm{diam}(U_{v^\perp})}{\langle \nu, v/|v|\rangle}$ by the same reasoning as in the first part of the proof. Then as $P$ is a convex polytope, we can use the Hausdorff measure of $\partial P$ to control the volume of its $R$-neighborhood.
\end{proof}

We now pass to the construction of the recovery sequence. For a given measurable set $E$ we need to construct sets $E_N=E_N(X_N)$, with $X_N\subset \Z^d, \sharp X_N=N$, such that $N^{\frac{d-1}{d}}\F_N(E_N)\to P_V(E)$.
\medskip

{\bf We first prove the statement under the further assumption that $\sharp \mathcal N<\infty$.}

\medskip

{\bf Step 1.} {\it Approximating $E$ by polyhedral sets of volume $1$.} By a classical approximation result we approximate $E$ by polyhedral sets, more precisely we find a sequence of polyhedral sets $E_j$ such that
\[
E_j\to E\quad\text{in $L^1$}\qquad\text{and}\qquad P(E_j)\to P(E)
\]
where $P$ is the standard perimeter. 
By Reshetnyak's theorem \cite[Theorem 2.38]{AFP} the same convergence holds for any anisotropic perimeter functional such as $P_V$.
Dilating by a factor converging to $1$ we can also impose that $|E_j|=1$.
\medskip

From now $E$ is assumed to be a fixed polyhedral set of volume $1$ and we work with its rescaling $N^{\frac{1}{d}}E$ which has volume $N$.
\medskip

{\bf Step 2.} {\it Approximation of polyhedral sets by discrete sets $Y_N$.}  For large $N\in\mathbb{N}$ we will consider the discrete sets
\[
Y_N:=(N^{\frac{1}{d}}E)\cap \Z^d.
\]
We have that 
\begin{equation}\label{xeps}
\left|\sharp Y_N - N\right|\le C_d \mathcal H^{d-1}(\partial E)\ N^{\frac{d-1}{d}}\quad\mbox{for}\quad N\ge N_E,
\end{equation}
in which $N_E$ only depends on $E$ (precisely, it depends on the rate of convergence of the limit in the definition of $\mathcal H^{d-1}(\partial E)$), and $C_d$ only depends on $d$. The above bound can be obtained by comparing the total volume of the $\Z^d$-translates of $[0,1)^d$ which are completely contained in $N^{\frac{1}{d}}E$ and the volume of the smallest union of such translates which contains $N^{\frac{1}{d}}E$, with $N$, which can be interpreted as the $d$-dimensional volume of $N^{\frac{1}{d}}E$.

We can then subtract or add a set of at most $k_N:=C_d \mathcal H^{d-1}(\partial E)N^{\frac{d-1}{d}}$ points to $(N^{\frac1d}E)\cap\Z^d$ in order to get a new set $X_N$ of precisely $N$ points, $\sharp X_N=N$. If $N\ge N_E$ large enough, then it is possible to organize these $k_N$ points as a cluster of scale $k_N^{\frac1d}$, which then has interface set of cardinality $C_d k_N^{\frac{d-1}{d}}$ at most. We then have for $N\ge N_E$
\begin{equation}\label{xnenrgy}
\left|\mathcal F(X_N) - \mathcal F(Y_N)\right|\le C'_d \mathcal H^{d-1}(\partial E) N^{\frac{(d-1)^2}{d^2}}\|V\|_{\ell_1(\Z^d)},
\end{equation}
where $\|V\|_{\ell_1(\Z^d)}:=\sum_{v\in\mathcal N}|V(v)|$.

\medskip

From now on we focus on approximating the value of $\mathcal F(Y_N)$, and we will use the notations of Section \ref{sec-spliten}. If we show that $\F^{v,\tau}(Y_N)$ satisfies the good bounds for all $v\in\mathcal N, \tau\in U_v$, then we can sum the bounds and use the triangle inequality for approximating $\mathcal F(Y_N)$.

\medskip

{\bf Step 3.} {\it Approximating $\F^{v,\tau}(Y_N)$ by the contribution of interiors of faces.}  We fix $v\in\mathcal N, \tau\in U_v$. We denote by $P$ a face of the polytope $N^{\frac1d}E$. We consider the discretization of $P$ adapted to $\F^{v,\tau}(Y_N)$ given by $\mathrm{Int}[P,\L^{v,\tau}]$ from \eqref{intperp}. For each face $P$ of $N^{\frac1d}E$, the total number of bonds congruent to $v$ cut by the interior of $P$ in $\L_{v^\perp}$ is controlled via Lemma \ref{lem-plane} and is given by \eqref{biperp}.

\medskip

As $P$ meets each of the parallel lines $\tau +x+\R v, x\in U_v^\perp$ at most once, the number of bonds cut by $P$ is the same as the number of lines of this type cut by $P$. Also, note that each cylindrical set of the form 
\[
C[v,\tau,x]:=U_{v^\perp} + x+\tau+\R v, \quad x\in \L^{v,\tau}
\]
contains exactly one of the above lines, and it is not counted within the contributions of $\mathrm{Int}[P,\L^{v,\tau}]$ only if $C[v,\tau, x]\cap \partial P\neq \emptyset$. The latter condition is equivalent to $\pi_{v^\perp}C[v,\tau,x]\cap \pi_{v^\perp}\partial P\neq \emptyset$. We bound from above the number of $x\in\L^{v,\tau}$ for which this happens as follows:
\begin{eqnarray}
\lefteqn{\sharp\left\{x\in \L_{v^\perp}:\ C[v,\tau,x]\cap \partial P\neq\emptyset\right\}}\nonumber\\
&\le&\frac{\mathcal H^{d-1}\left(\left\{x\in v^\perp:\ \mathrm{dist}(x,(\pi_{v^\perp}\partial P))\le \mathrm{diam}(U_{v^\perp})\right\}\right)}{\mathcal H^{d-1}(U_{v^\perp})}\nonumber\\
&\le& \frac{C_d\ \mathrm{diam}(U_{v^\perp})\mathcal H^{d-2}(\pi_{v^\perp}\partial P)}{\mathcal H^{d-1}(U_{v^\perp})}\nonumber\\
&\le& C'_d\frac{\mathrm{diam}(U_{v^\perp})^2}{\mathcal H^{d-1}(U_{v^\perp})}\mathcal H^{d-2}(\partial P),\label{bdminko}
\end{eqnarray}
where in the last step we used the fact that projections decrease Hausdorff measure.

\medskip

Summing up our reasoning so far, we show that $-\F^{v,\tau}(Y_N)$ can be approximated by the sum of one $V(v)$ for each one of the lines that meet the interiors of all faces $P$, and the error terms are bounded with the help of \eqref{bdminko} and \eqref{biperp}. Note that, in order to obtain the precise contribution in $\F^{v,\tau}$, we multiply all terms by the negative factor $V(v)$, which reverses the inequalities. We get:
\begin{multline}\label{bdint1}
0\ge V(v)\sum_{P\mbox{ face of }N^{\frac1d}E}\frac{\left\langle \nu,\frac{v}{|v|}\right\rangle_+}{\mathcal H^{d-1}(U_{v^\perp})}\mathcal H^{d-1}(\mathrm{Int}[P,\L_{v^\perp}]) + \F^{v,\tau}(Y_N)
\\\ge V(v)\sum_{P\mbox{ face of }N^{\frac1d}E} C'_d\frac{\mathrm{diam}(U_{v^\perp})^2}{\mathcal H^{d-1}(U_{v^\perp})}\mathcal H^{d-2}(\partial P).
\end{multline}
We use now \eqref{arebound} and the definition of $P^v$ from Section \ref{ssec:anisoper}, to get that
\begin{multline}\label{bdint2}
0\ge P^v(N^{\frac1d}E) +V(v)\sum_{P\mbox{ face of }N^{\frac1d}E}\frac{\left\langle \nu,\frac{v}{|v|}\right\rangle_+}{\mathcal H^{d-1}(U_{v^\perp})}\mathcal H^{d-1}(\mathrm{Int}[P,\L_{v^\perp}])\\
\ge C_d V(v)\sum_{P\mbox{ face of }N^{\frac1d}E}\frac{\mathrm{diam}(U_{v^\perp})}{\langle \nu, v/|v|\rangle}\mathcal H^{d-2}(\partial P).
\end{multline}
{\bf Step 4.} {\it Conclusion of the proof.} We may re-express the sums in \eqref{bdint1}, \eqref{bdint2} in terms of faces of $E$ itself, using the scaling properties of the functionals $\mathcal H^{d-1}, \mathcal H^{d-2}$. The sums involving $\partial P$ scale with $N^{\frac{d-2}{d}}$ whereas the other terms scale with $N^{\frac{d-1}{d}}$. Then we find, using also the previous estimate \eqref{xnenrgy}, 
\begin{align}
    0& \ge  \sum_vP^v(E)-N^{-\frac{d-1}d} \mathcal F_N(E_N(X_N))\nonumber\\
    &\ge  \sum_vP^v(E) -N^{-\frac{d-1}d} \mathcal F(Y_N) + C'_d \mathcal H^{d-1}(\partial E) N^{-\frac{d-1}{d^2}}\|V\|_{\ell_1,per}\nonumber\\
    &\ge  C'_d \mathcal H^{d-1}(\partial E) N^{-\frac{d-1}{d^2}}\|V\|_{\ell_1,per} \nonumber\\
    &\hspace{0.5cm}- C''_dN^{-\frac{1}{d}}\sum_v V(v)\frac{\sharp(U_v\cap \Z^d)\left(\mathrm{diam}(U_{v^\perp}) + \mathrm{diam}(U_{v^\perp})^2\right)}{\mathcal H^{d-1}(U_{v^\perp})}\hspace{-0.5cm}\sum_{P\mbox{ face of }E}\hspace{-0.5cm} \mathcal H^{d-2}(\partial P).\label{bdint3}
\end{align}
As the factors involving $V, E$ are independent of $N$ and $\sum_vP^v(E)=P_V(E)$, it follows that $N^{-\frac{d-1}{d}}\F_N(E_N(X_N))\to P_V(E)$, as desired.
\medskip

\subsection{Compactness}
To prove compactness we use that the energy $N^{-\frac{d-1}{d}}\F_N (X_N)$ is comparable with the perimeter of the associated sets $E_N(X_N)$ defined in \eqref{eq:ENdef}, and apply standard compactness results for finite perimeter sets. We thus obtain compactness in the topology of local convergence in measure, which is the natural one to expect (there are sequences with equibounded energy that loose mass at infinity).
\begin{remark}In the two-dimensional case, a connectedness assumption for the sequence of sets is sufficient to at least imply compactness up to translations, because perimeter controls diameter for connected sets. This is the requirement considered for instance in \cite{AFS}. However in higher dimension this is not sufficient anymore, as is clear by considering a set with a long “tentacle”.
\end{remark}
We prove compactness in the case when $\mathrm{span}_\Z\, \N=\Z^d$. For a discussion about what happens if the span is not $\Z^d$ see Subsection \ref{subsec:lowerdim}.

In order to control the perimeter of $E_N(X_N)$ with the energy $\F(X_N)$ we need the following combinatorial lemma.
\begin{lemma}\label{lemma:combinatorial}

    Consider a subset $\N\subset\Z^d$ such that $\mathrm{span}_\Z\, \N=\Z^d$, and suppose that $V(v)<0$ for every $v\in \N$. Then, recalling \eqref{eq:rescenergy} and \eqref{eq:ENdef}, we have
    \[
        P(E_N(X_N))\leq K \F(X_N)
    \]
    for some constant $K$ depending on $V$, $\N$ and $d$ only.
\end{lemma}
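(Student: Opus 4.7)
The plan is to prove the slightly stronger \emph{unscaled} inequality $\sum_{i=1}^d \tilde{B}_{e_i}(X_N)\leq K\,\F(X_N)$, where $\tilde{B}_{e_i}(X_N)$ counts the undirected pairs $\{z,z+e_i\}$ with exactly one endpoint in $X_N$; since $P(E_N(X_N))=N^{-(d-1)/d}\sum_i\tilde{B}_{e_i}(X_N)$ and $N^{-(d-1)/d}\leq 1$ for $N\geq 1$, this implies the lemma. The hypothesis $\mathrm{span}_\Z\,\N=\Z^d$ yields, for every $i\in\{1,\ldots,d\}$, an integer decomposition $e_i=\sum_{v\in\N}a_v^{(i)}v$ which I fix once and for all; set $M_v:=\sum_i|a_v^{(i)}|$ and $\alpha:=\min_{v\in\N}|V(v)|>0$. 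The overall strategy is to route each broken nearest-neighbor bond through a broken $v$-bond with $v\in\N$ and perform a bipartite count.

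For each broken $e_i$-bond $\{z,z+e_i\}$, the decomposition gives a discrete path $z=z_0,z_1,\ldots,z_{L_i}=z+e_i$ whose consecutive increments are $\pm v$ with $v\in\N$, totalling $|a_v^{(i)}|$ occurrences of each $v$. Since the endpoints of the path lie on opposite sides of $X_N$, some index $k$ must be a transition ($z_{k-1}\in X_N$ iff $z_k\notin X_N$, or vice versa), and I assign the $e_i$-bond to the undirected broken $v$-bond $\{z_{k-1},z_k\}$, where $v\in\N$ realizes the $k$-th step. Conversely, a fixed broken $v$-bond $\{a,a+v\}$ can be the assignee of at most $M_v$ different $e_i$-bonds: for each $i$ and each path-position where $\pm v$ appears as a step, the starting point $z$ is uniquely determined by $a$ together with that position. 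This bipartite count yields
\[
\sum_{i=1}^d\tilde{B}_{e_i}(X_N)\ \leq\ \sum_{v\in\N}M_v\,\tilde{B}_v(X_N),
\]
where $\tilde{B}_v(X_N)$ denotes the total number of undirected broken $v$-bonds.

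The main subtle point that I expect will be the key step is relating $\tilde{B}_v$ back to $\F(X_N)$: the energy counts only the oriented quantity $B_v^+(X_N):=\#\{y\in X_N:y+v\notin X_N\}$ weighted by $|V(v)|$, while a priori the reverse count $B_v^-(X_N):=\#\{y\in X_N:y-v\notin X_N\}$ appearing in $\tilde{B}_v=B_v^++B_v^-$ is \emph{not} counted by $\F$ when $-v\notin\N$. The resolution is an elementary slicing identity: on each line $y_0+\Z v$ the restriction of $X_N$ is a finite subset of $\Z$ (after parameterization by $v$), and both $B_v^+$ and $B_v^-$ restricted to that line equal the number of maximal runs of consecutive $X_N$-points; summing over lines gives $B_v^+(X_N)=B_v^-(X_N)$, hence $\tilde{B}_v=2B_v^+$. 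Combining this with the trivial bound $\F(X_N)\geq\alpha\sum_{v\in\N}B_v^+(X_N)$, we obtain
\[
\sum_{i=1}^d\tilde{B}_{e_i}(X_N)\ \leq\ 2\bigl(\max_{v\in\N}M_v\bigr)\sum_{v\in\N}B_v^+(X_N)\ \leq\ \frac{2\max_{v\in\N}M_v}{\alpha}\,\F(X_N),
\]
and the lemma follows with $K:=2\max_{v\in\N}M_v/\alpha$.
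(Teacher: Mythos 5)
Your proof is correct and follows essentially the same route as the paper's: both express the perimeter of $E_N(X_N)$ as a count of broken nearest-neighbour bonds, decompose each $e_i$ as an integer combination of vectors in $\N$, transfer broken $e_i$-bonds to broken $\N$-bonds with bounded multiplicity, and finish with the trivial lower bound $\F(X_N)\ge \alpha\sum_{v}B_v^+(X_N)$. The paper packages the transfer step as the iterated subadditivity inequality $\#\big((X+a+b)\setminus X\big)\le\#\big((X+a)\setminus X\big)+\#\big((X+b)\setminus X\big)$ applied to a basis $(v_1,\dots,v_d)\subset\N$ of $\Z^d$, and your explicit path-routing count is the same mechanism unwound; if anything your version is slightly more robust, since a $d$-element basis of $\Z^d$ need not exist inside $\N$ (e.g.\ $\N=\{2e_1,3e_1,e_2\}$ in $\Z^2$), whereas your decomposition $e_i=\sum_{v\in\N}a_v^{(i)}v$ over all of $\N$ always does.
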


\begin{proof}
Since $E_N(X_N)$ is a union of cubes, the contributions to its perimeter come from $(d-1)$-dimensional faces each of which is orthogonal to some canonical basis vector $e_i$, and separate a point $x\in X_N$ and a point $x\pm e_i\not\in X_N$, for some basis vector $e_i$. Therefore
\[
P(E_N(X_N))=2\sum_{i=1}^d \#\big((X_N+e_i)\setminus X_N\big).
\]
By the assumption that $\mathrm{span}_{\mathbb Z}\,\mathcal N=\mathbb Z^d$, we can choose a basis $\mathbf{b}=(v_1,\ldots,v_d)$ of $\Z^d$ made of vectors in $\N$. This means that for every standard basis vector $e_i$ we can write
$
e_i=\sum_{j=1}^{d} a_i^j v_j
$
for some integer coefficients $a_i^j$.  Repeatedly using the rule 
\[
\#\big( (X+a+b)\setminus X\big)\leq \#\big((X+b)\setminus X\big)+\#\big((X+a)\setminus X\big)
\]
we obtain
\[
\#\big((X_N+e_i)\setminus X_N\big)\leq \sum_{j=1}^d a_i^j\#\big((X_N+v_j)\setminus X_N\big)
\leq A \sum_{j=1}^d \#\big((X_N+v_j)\setminus X_N\big)
\]
where $A=\sup_{i,j=1,\ldots,d} |a_i^j|$. Moreover
\[
\F(X_N)\geq \sum_{j=1}^d -V(v_j) \#\big((X_N+v_j)\setminus X_N\big)\geq  c\sum_{j=1}^d \#\big((X_N+v_j)\setminus X_N\big)
\]
where $c=\inf_{j=1,\ldots,d} (-V(v_j))$.
Putting everything together the conclusion follows with $K=2dA/c$.
\end{proof}

\begin{proof}[Proof of Proposition \ref{prop:compactness}]
From Lemma \ref{lemma:combinatorial} we have the bound 
\[
P(E_N(X_N))\leq K N^{-\frac{d-1}{d}}\F_N(X_N)\leq CK
\]
for some constant $K$ not depending on $N$. The claimed compactness now follows from the standard compactness of finite perimeter sets, see e.g. \cite[Theorem~3.39]{AFP}.
\end{proof}
\subsection{Sparse or lower dimensional lattices}\label{subsec:sparselow}
In this section we consider the case that a canonical ambient lattice $\mathcal L\subset \mathbb R^d$ is given, but $\mathcal N:=\{v\in\mathbb R^d:\ V(v)\neq 0\}$ does not span the whole of $\mathcal L$. As usual, we restrict to the case $\mathcal L=\mathbb Z^d$, as our problem is affine-invariant, and we consider the case $\mathrm{span}_{\mathbb Z}\,\mathcal N\neq \mathbb Z^d$. The discussion of the $\Gamma$-limit of our perimeter functional follows analogous principles in these cases, but the rescalings that we use depend on $\mathrm{dim}\,\mathrm{span}_{\mathbb R}\,\mathcal N$ as well. We start by dealing with the case that this dimension is $<d$.
\subsubsection{Lower dimensional structures}\label{subsec:lowerdim}
In the case where $W:=\mathrm{span}_\R\, \N$ is a proper subspace of $\R^d$ (say $\dim W=k<d$) the $\Gamma$-convergence result is still true, but the $L^1_{loc}$ convergence is not the natural one anymore. This is clear considering a potential with $V(\pm e_1)=-1$ and zero otherwise, where the optimal structures are $1$-dimensional segments in direction $e_1$, and therefore under the scaling \eqref{eq:ENdef} the sets $E_N(X)$ are not compact (they converge weakly to zero when seen as measures as in \eqref{eq:empiricaldef}). However it is clear that in this case, after a rescaling by a factor $N^{-1}$, the optimal structures converge weakly to segments. As we shall now sketch, there is a natural topology where compactness holds even when $\dim W<d$: the topology of “$L^1_{loc}$-convergence on every slice parallel to $W$”. Actually for simplicity we will consider the empirical measures associated to $X_N$ and use the weak convergence of measures.

We first partition $\Z^d$ according to subspaces parallel to $W$: we consider
\[
\tau(W):=\{\tau\in\R^d: (\tau+W)\cap \Z^d\neq \emptyset\}
\]
and the family
\[
\mathcal{P}(W):=\{\tau+W:\tau\in \tau(W)\}
\]
of $k$-planes parallel to $W$. 
Let us define the anisotropic dilations
\[
T_\lambda^W(x):=\lambda\pi_{W}x+\pi_{W^\perp}x.
\]
It is clear that $T_\lambda^W$ leaves $\mathcal{P}(W)$ invariant. The replacement for the sets \eqref{eq:ENdef} is given by the empirical measures
\begin{equation}\label{eq:empirical}
\mu_N^W(X):=(T_{N^{-k/d}}^W)_\#\left(\frac{1}{\#X}\sum_{x\in X}\delta_x\right).
\end{equation}
The measures $\mu_N^W(X)$ are supported on $\mathcal{P}(W)$ and have total mass $1$ if $\# X=N$. It is clear that under the energy \eqref{eq:energy} the only interaction is among points living in the same subspace parallel to $W$, while different spaces do not interact.
We define the following notion of convergence on the space of empirical measures.

\begin{definition}[Convergence on slices]\label{def:sliceconvergence}
Let $W:=\mathrm{span}_\R\, \N$, $\dim \, W=k$. A sequence of $\mu_j$ of measures on $\mathcal{P}(W)$ converges to $\mu$ if for every $\tau\in\tau(W)$ the measures $\mu_j\llcorner (\tau+W)$ converge to $\mu\llcorner (\tau+W)$.
\end{definition}

We then define the space of slices with finite perimeter, given by all sets of the form 
\begin{equation}\label{eq:finperslices}
E=\bigcup_{\tau \in \mathcal{P}(W)} E_\tau,\qquad\text{with }\quad E_\tau\subset \tau+W\quad\text{ and }\quad \sum_{\tau\in\tau(W)}P(E_\tau)<\infty
\end{equation}
where we identify $E_\tau$ with a ($k$-dimensional) finite perimeter set in $W$. On this space we define the functional
\begin{equation}\label{eq:PVW}
P_V^W(E):=\sum_{Z\in \mathcal{P}(W)}P_V(E\cap Z)
\end{equation}
where $P_V$ is defined by \eqref{eq:defper} and \eqref{eq:phiV} (and where again we identify $E\cap Z$ with a finite perimeter set inside $Z$).

We then have the following result. We omit the proof since it is a direct consequence of Theorem \ref{thm:main} and Proposition \ref{prop:compactness} applied to every slice.

\begin{proposition}[Lower dimensional structures]
    Suppose that $W:=\mathrm{span}_\R\,\N$ has dimension $1\leq k<d$, and that $\mathrm{span}_{\Z}\,\N=\Z^d\cap W$. Consider the energy \eqref{eq:energy}, a sequence $X_N$ of configurations in $\Z^d$, with $\# X_N=N$, and the associated empirical measures defined by \eqref{eq:empirical}. Then we have:
    \begin{itemize}
        \item Compactness: every sequence $\mu_N$ such that $\sup_{N}\E (\mu_N^W)<\infty$ admits a subsequence that converges in the topology of Definition \ref{def:sliceconvergence} to a measure of the form $\mu=\frac{1}{\det (\mathrm{span}_{\Z}\N)}\H^k\llcorner E$, for some $E$ as in \eqref{eq:finperslices}.
        \item $\Gamma$-convergence: the functionals \eqref{rescenergy} converge to $\frac{1}{\det\L}P_V^W$ (see \eqref{eq:PVW}) under the same topology, where we identify a set $E$ as in \eqref{eq:finperslices} with the measure $\H^k\llcorner E$.
    \end{itemize}
\end{proposition}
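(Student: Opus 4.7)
The central observation is that since $V$ vanishes outside $\mathcal N \subset W$, the energy $\mathcal F$ of \eqref{eq:rescenergy} decouples across affine slices parallel to $W$: two points of $\Z^d$ can interact only if they lie in the same translate $\tau+W$. Writing $\Lambda_W := \Z^d \cap W$ and $N_\tau := \#(X_N \cap (\tau+W))$, one has
\[
\mathcal F(X_N) \;=\; \sum_{\tau+W \in \mathcal P(W)} \mathcal F\bigl(X_N \cap (\tau+W)\bigr),
\]
and the empirical measure $\mu_N^W(X_N)$ decomposes correspondingly into its restrictions to the various slices. Since the topology of Definition \ref{def:sliceconvergence} tests precisely these restrictions one slice at a time, both claims reduce to slice-wise analogues of Theorem \ref{thm:main} and Proposition \ref{prop:compactness} performed in the $k$-dimensional ambient space $\tau + W$, endowed with the sublattice $\tau + \Lambda_W$.

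After translating by $-\tau$, the in-slice configuration $X_N \cap (\tau+W) - \tau$ is a subset of $\Lambda_W$ of cardinality $N_\tau$, and $\mathcal F(X_N \cap (\tau+W))$ equals the $k$-dimensional energy of this configuration with the same potential $V$. The assumption $\mathrm{span}_\Z \mathcal N = \Lambda_W$ is exactly what places us in the setting of Theorem \ref{thm:main} applied in dimension $k$. The in-slice restriction of $\mu_N^W$ is then, up to the mass prefactor $N_\tau/N$, the $k$-dimensional empirical measure of form \eqref{eq:empiricaldef} associated to this $N_\tau$-point configuration, composed with an additional linear dilation in $W$ that converts the intrinsic scaling $N_\tau^{-1/k}$ into the global scaling $N^{-k/d}$.

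For compactness, the energy bound forces a bound on the sum of in-slice energies and hence, by Lemma \ref{lemma:combinatorial} applied in each slice, a bound on the sum of $k$-dimensional perimeters of the associated sets. A Cantor diagonal argument over the countable family $\mathcal P(W)$, combined with Proposition \ref{prop:compactness} in each slice, extracts a subsequence along which each in-slice rescaled set converges in $L^1_{\mathrm{loc}}$ to a $k$-dimensional finite-perimeter set $E_\tau$, yielding the candidate limit $\tfrac{1}{\det \Lambda_W}\mathcal H^k \llcorner \bigcup_\tau E_\tau$. For $\Gamma$-convergence, the liminf inequality follows by applying Theorem \ref{thm:main} within each slice and summing via Fatou's lemma, while the recovery sequence is built by first truncating $E$ to a set supported on finitely many slices (possible because $\sum_\tau P(E_\tau) < \infty$), invoking in each such slice the polyhedral construction of Section \ref{sec:glimsup1} in dimension $k$, and finally rebalancing an $o(N)$ number of points across slices so that the total cardinality is exactly $N$ and the prescribed mass fractions $\mathcal H^k(E_\tau)/\det \Lambda_W$ are met.

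The main obstacle is the bookkeeping forced by the mismatch between the global dilation $N^{-k/d}$ used in the definition of $\mu_N^W$ and the intrinsic $k$-dim dilation $N_\tau^{-1/k}$ for which Theorem \ref{thm:main} is stated: once the asymptotic mass fractions $\alpha_\tau = \lim N_\tau/N$ are fixed, the two scalings differ by a factor depending on $\alpha_\tau$ and on the exponent $k/d$, and this factor must be propagated through both the liminf estimate and the recovery construction. Simultaneously, slices along which $\alpha_\tau = 0$ contribute trivially to the limit measure but their energy tails must still be controlled uniformly in $N$, which is what replaces the single-slice arguments of Sections \ref{latliminf} and \ref{sec:glimsup1} with a uniformly summable family of slice-wise estimates.
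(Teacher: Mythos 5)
Your proposal follows exactly the route the paper intends: the paper omits the proof entirely, stating that the result ``is a direct consequence of Theorem \ref{thm:main} and Proposition \ref{prop:compactness} applied to every slice'', and your slice-wise decomposition of the energy and of the empirical measure, with a diagonal argument for compactness and Fatou plus a truncated, rebalanced recovery sequence for the $\Gamma$-limit, is precisely that argument carried out. The scaling bookkeeping you flag at the end is a real (and correctly identified) point of care --- indeed the dilation factor in \eqref{eq:empirical} only matches the heuristic of Section \ref{subsec:lowerdim} if read as $N^{-1/k}$ --- but it does not change the structure of the proof.
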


\subsubsection{Sparse lattices} 
We finally briefly mention the case of sparse lattices, that is when $\Z^d\cap \mathrm{span}_\R\, \N\neq \mathrm{span}_\Z\,\N$ (for example $\N=\{\pm 2e_1,\ldots,\pm 2e_d\}$, $\mathrm{span}_\Z\, \N=2\Z^d$). It is clear that in this case we can represent $\Z^d=\bigcup_{x\in \mathcal C}(\mathrm{span}_\Z\N +x)$ as union of cosets in the finite quotient group $\Z^d/\mathrm{span}_\Z\N$ for a choice of coset representatives $\mathcal C\subset\R^d$.  The energy \eqref{eq:energy} of a finite $X\subset \Z^d$ splits as a sum of energies of coset subconfigurations $X\cap (\mathrm{span}_\Z\,\N+x), x\in\mathcal C$, which do not interact. In this case the statement of the $\Gamma$ convergence has to be modified to take into account the possibility that each such $X\cap (\mathrm{span}_\Z\N+x)$, when rescaled, converges to a different finite perimeter set. The limit space is thus composed of finite superpositions of finite perimeter sets, and the $\Gamma$-limit is a sum of energies of the type \eqref{eq:defper} (or \eqref{eq:PVW} in case of lower dimensional structures) among such a decomposition. We do not write the detailed results and we limit to observe here that they can be deduced in case of need with little modifications, and with the help of the observations above, from the main results.

\section{Quasicrystal case}\label{sec:qc}
The purpose of this Section is to extend the results of Section \ref{sec:proof} to nearest-neighbor interaction energies for subsets of quasicrystal tilings. 

\subsection{Multigrid construction of quasicrystals}\label{ssec:multigrid}

We here introduce the definition and notation for multigrids and for the associated dual tiling. 

\subsubsection{Multigrids in the “dual space”}\label{ssec:mgds} We start with the construction of multigrids, which we imagine to live in a space “dual” to the one in which our quasicrystal tiling will live.

\medskip

For $g\in\R^d, \gamma\in\R$ we define a \emph{hyperplane grid} as follows:
\begin{eqnarray}\label{1grid}
\mathcal H(g,\gamma)&:=&\{H(g,k,\gamma):\ k\in\Z\},\quad \mbox{where}\nonumber\\
H(g,k,\gamma)&:=&\{x\in\R^d: \langle x,\tfrac{g}{|g|}\rangle -\gamma =k|g|\}.
\end{eqnarray}
If $\mathcal G\subset\R^d$ is a finite set and to each $g\in\mathcal G$ we associate a number $\gamma_g\in\R$, we call the \emph{multigrid with normals $\mathcal G$ and translations $\gamma$} the collection of hyperplanes
\begin{equation}\label{mgrid}
    M(\mathcal G,\gamma):=\bigcup_{g\in G}\mathcal H(g,\gamma_g).
\end{equation}

We will sometimes avoid the mention of $\gamma, \mathcal G$ when they are clear from the context, and will write $\mathcal H_g:=\mathcal H(g, \gamma_g)$ and $H_{g,k}:=H(g,k,\gamma_g)$ in that case. 
With this convention, for $g\in\G$ and $k\in\mathbb{Z}$ we also define the slabs
\begin{equation}\label{mgslab}
S_{g,k}=S_{g,k,\gamma_g}:=\{x\in\R^d: \langle x, \tfrac{g}{|g|}\rangle -\gamma_g\in \left(k, k+1\right)|g|\}.
\end{equation}

We will assume in the following that every multigrid we consider is in general position, that is, we suppose that any $d$ hyperplanes $\{H_{k_g,g}:\ g\in J\}$, with 
$J\subset\mathcal G, \sharp J=d$ must intersect at a single point, which we denote by
$x(J,k_J)$, where $k_J:=(k_g)_{g\in J}$. Moreover we will assume that no more than $d$ hperplanes intersect simultaneously.

\begin{figure}
 
\end{figure}

\begin{figure}
\centering
\begin{minipage}{.45\textwidth}
  \centering
     \begin{tikzpicture}[line cap=round,line join=round,>=triangle 45,x=1.0cm,y=1.0cm,scale=0.7]
\clip(-3.4275934979980898,-12.326560822682575) rectangle (5.170058951469034,-4.493603422807421);
\draw [line width=1.pt,color=xfqqff,domain=-3.4275934979980898:5.170058951469034] plot(\x,{(--14.--2.*\x)/1.});
\draw [line width=1.pt,color=xfqqff,domain=-3.4275934979980898:5.170058951469034] plot(\x,{(--11.34843407987061--2.*\x)/1.});
\draw [line width=1.pt,color=xfqqff,domain=-3.4275934979980898:5.170058951469034] plot(\x,{(--8.696868159741223--2.*\x)/1.});
\draw [line width=1.pt,color=xfqqff,domain=-3.4275934979980898:5.170058951469034] plot(\x,{(--6.045302239611835--2.*\x)/1.});
\draw [line width=1.pt,color=xfqqff,domain=-3.4275934979980898:5.170058951469034] plot(\x,{(--3.3937363194824464--2.*\x)/1.});
\draw [line width=1.pt,color=xfqqff,domain=-3.4275934979980898:5.170058951469034] plot(\x,{(--0.742170399353058--2.*\x)/1.});
\draw [line width=1.pt,color=xfqqff,domain=-3.4275934979980898:5.170058951469034] plot(\x,{(-1.9093955207763305--2.*\x)/1.});
\draw [line width=1.pt,color=xfqqff,domain=-3.4275934979980898:5.170058951469034] plot(\x,{(-4.560961440905719--2.*\x)/1.});
\draw [line width=1.pt,color=xfqqff,domain=-3.4275934979980898:5.170058951469034] plot(\x,{(-7.212527361035107--2.*\x)/1.});
\draw [line width=3.pt,color=xfqqff,domain=-3.4275934979980898:5.170058951469034] plot(\x,{(-9.864093281164497--2.*\x)/1.});
\draw [line width=1.pt,color=qqqqff,domain=-3.4275934979980898:5.170058951469034] plot(\x,{(--1.1116006898738906--0.08108590502164947*\x)/1.487069462060853});
\draw [line width=1.pt,color=qqqqff,domain=-3.4275934979980898:5.170058951469034] plot(\x,{(-1.588984180848008--0.08108590502164947*\x)/1.487069462060853});
\draw [line width=1.pt,color=qqqqff,domain=-3.4275934979980898:5.170058951469034] plot(\x,{(-4.289569051569907--0.08108590502164947*\x)/1.487069462060853});
\draw [line width=1.pt,color=qqqqff,domain=-3.4275934979980898:5.170058951469034] plot(\x,{(-6.990153922291806--0.08108590502164947*\x)/1.487069462060853});
\draw [line width=1.pt,color=qqqqff,domain=-3.4275934979980898:5.170058951469034] plot(\x,{(-9.690738793013704--0.08108590502164947*\x)/1.487069462060853});
\draw [line width=1.pt,color=qqqqff,domain=-3.4275934979980898:5.170058951469034] plot(\x,{(-12.391323663735603--0.08108590502164947*\x)/1.487069462060853});
\draw [line width=3.pt,color=qqqqff,domain=-3.4275934979980898:5.170058951469034] plot(\x,{(-15.091908534457502--0.08108590502164947*\x)/1.487069462060853});
\draw [line width=1.pt,color=qqqqff,domain=-3.4275934979980898:5.170058951469034] plot(\x,{(-17.792493405179403--0.08108590502164947*\x)/1.487069462060853});
\draw [line width=1.pt,color=qqqqff,domain=-3.4275934979980898:5.170058951469034] plot(\x,{(-20.493078275901304--0.08108590502164947*\x)/1.487069462060853});
\draw [line width=1.pt,color=xfqqff,domain=-3.4275934979980898:5.170058951469034] plot(\x,{(-12.515659201293884--2.*\x)/1.});
\draw [line width=1.pt,color=xfqqff,domain=-3.4275934979980898:5.170058951469034] plot(\x,{(-15.167225121423272--2.*\x)/1.});
\draw [line width=1.pt,color=xfqqff,domain=-3.4275934979980898:5.170058951469034] plot(\x,{(-17.81879104155266--2.*\x)/1.});
\draw [line width=1.pt,color=xfqqff,domain=-3.4275934979980898:5.170058951469034] plot(\x,{(-20.47035696168205--2.*\x)/1.});
\draw [line width=1.pt,color=xfqqff,domain=-3.4275934979980898:5.170058951469034] plot(\x,{(-23.121922881811436--2.*\x)/1.});
\draw [line width=1.pt,color=xfqqff,domain=-3.4275934979980898:5.170058951469034] plot(\x,{(-25.773488801940825--2.*\x)/1.});
\draw [line width=1.pt,color=xfqqff,domain=-3.4275934979980898:5.170058951469034] plot(\x,{(-28.425054722070215--2.*\x)/1.});
\draw [line width=1.pt,color=ccqqqq,domain=-3.4275934979980898:5.170058951469034] plot(\x,{(--8.439988633657766--0.8695754987623143*\x)/-0.5680717338602375});
\draw [line width=1.pt,color=ccqqqq,domain=-3.4275934979980898:5.170058951469034] plot(\x,{(--7.36112159079906--0.8695754987623143*\x)/-0.5680717338602375});
\draw [line width=1.pt,color=ccqqqq,domain=-3.4275934979980898:5.170058951469034] plot(\x,{(--6.282254547940356--0.8695754987623143*\x)/-0.5680717338602375});
\draw [line width=1.pt,color=ccqqqq,domain=-3.4275934979980898:5.170058951469034] plot(\x,{(--5.203387505081651--0.8695754987623143*\x)/-0.5680717338602375});
\draw [line width=1.pt,color=ccqqqq,domain=-3.4275934979980898:5.170058951469034] plot(\x,{(--4.124520462222947--0.8695754987623143*\x)/-0.5680717338602375});
\draw [line width=1.pt,color=ccqqqq,domain=-3.4275934979980898:5.170058951469034] plot(\x,{(--3.0456534193642435--0.8695754987623143*\x)/-0.5680717338602375});
\draw [line width=1.pt,color=ccqqqq,domain=-3.4275934979980898:5.170058951469034] plot(\x,{(--1.9667863765055391--0.8695754987623143*\x)/-0.5680717338602375});
\draw [line width=1.pt,color=ccqqqq,domain=-3.4275934979980898:5.170058951469034] plot(\x,{(--0.8879193336468347--0.8695754987623143*\x)/-0.5680717338602375});
\draw [line width=1.pt,color=ccqqqq,domain=-3.4275934979980898:5.170058951469034] plot(\x,{(-0.19094770921186965--0.8695754987623143*\x)/-0.5680717338602375});
\draw [line width=1.pt,color=ccqqqq,domain=-3.4275934979980898:5.170058951469034] plot(\x,{(-1.269814752070574--0.8695754987623143*\x)/-0.5680717338602375});
\draw [line width=1.pt,color=ccqqqq,domain=-3.4275934979980898:5.170058951469034] plot(\x,{(-2.348681794929279--0.8695754987623143*\x)/-0.5680717338602375});
\draw [line width=1.pt,color=ccqqqq,domain=-3.4275934979980898:5.170058951469034] plot(\x,{(-3.4275488377879832--0.8695754987623143*\x)/-0.5680717338602375});
\draw [line width=1.pt,color=ccqqqq,domain=-3.4275934979980898:5.170058951469034] plot(\x,{(-4.506415880646688--0.8695754987623143*\x)/-0.5680717338602375});
\draw [line width=1.pt,color=ccqqqq,domain=-3.4275934979980898:5.170058951469034] plot(\x,{(-5.5852829235053925--0.8695754987623143*\x)/-0.5680717338602375});
\end{tikzpicture}
  \caption{Multigrid consisting of three families of lines in the plane, coded by colour. Two lines are highlighted.}
  \label{fig:test1}
\end{minipage}%
\hspace{5mm}
\begin{minipage}{.45\textwidth}
  \centering
  \definecolor{xfqqff}{rgb}{0.4980392156862745,0.,1.}
\definecolor{zzttff}{rgb}{0.6,0.2,1.}
\definecolor{ccqqqq}{rgb}{0.8,0.,0.}
\definecolor{qqqqff}{rgb}{0.,0.,1.}
\begin{tikzpicture}[line cap=round,line join=round,>=triangle 45,x=1.0cm,y=1.0cm,scale=0.27]
\clip(-11.271012882133075,-5.262184867529919) rectangle (8.57684902155027,15.1324500913512);
\fill[line width=2.pt,color=qqqqff,fill=qqqqff,fill opacity=0.3499999940395355] (-12.129177104210012,3.7196600026564863) -- (-9.705300397407097,1.4066497176664963) -- (-7.865054483207117,2.4630129672759318) -- (-5.441177776404202,0.1500026822859417) -- (-3.600931862204221,1.2063659318953772) -- (-1.1770551554013062,-1.1066443530946128) -- (0.6631907587986747,-0.050281103485177336) -- (3.0870674656015895,-2.363291388475167) -- (4.9273133798015705,-1.3069281388657314) -- (7.351190086604485,-3.6199384238557206) -- (9.191436000804465,-2.563575174246285) -- (11.61531270760738,-4.876585459236274) -- (13.455558621807361,-3.820222209626839) -- (13.963768054206902,-0.14337880130606706) -- (12.123522140006921,-1.1997420509155026) -- (9.699645433204008,1.1132682340744866) -- (7.859399519004027,0.0569049844650511) -- (5.435522812201112,2.3699152694550403) -- (3.595276898001131,1.3135520198456048) -- (1.1714001911982161,3.6265623048355944) -- (-0.6688457230017648,2.570199055226159) -- (-3.0927224298046796,4.883209340216149) -- (-4.93296834400466,3.8268460906067134) -- (-7.356845050807575,6.139856375596704) -- (-9.197090965007556,5.083493125987268) -- (-11.62096767181047,7.3965034109772585) -- cycle;
\fill[line width=2.pt,color=xfqqff,fill=xfqqff,fill opacity=0.3199999928474426] (7.124926005994308,17.51871240490487) -- (9.548802712797222,15.205702119914882) -- (7.708556798597241,14.149338870305446) -- (5.8683108843972605,13.09297562069601) -- (5.3601014519977195,9.416132212375238) -- (3.5198555377977385,8.359768962765802) -- (3.011646105398197,4.6829255544450294) -- (1.1714001911982161,3.6265623048355944) -- (-0.6688457230017648,2.570199055226159) -- (-1.1770551554013062,-1.1066443530946128) -- (-3.017301069601287,-2.1630076027040483) -- (-4.857546983801268,-3.219370852313484) -- (-5.36575641620081,-6.896214260634256) -- (-7.789633123003725,-4.583203975644266) -- (-7.281423690604183,-0.9063605673234938) -- (-5.441177776404202,0.1500026822859417) -- (-3.600931862204221,1.2063659318953772) -- (-3.0927224298046796,4.883209340216149) -- (-1.2524765156046986,5.939572589825585) -- (0.5877693985952823,6.99593583943502) -- (1.0959788309948237,10.672779247755791) -- (2.9362247451948047,11.729142497365228) -- (3.444434177594346,15.405985905685998) -- (5.284680091794327,16.462349155295435) -- cycle;
\draw [line width=1.pt,color=qqqqff] (-12.05375574400662,-3.3265569402637114)-- (-11.54554631160708,0.35028646805706076);
\draw [line width=1.pt,color=qqqqff] (-9.121669604804165,-1.9627238169329293)-- (-9.629879037203706,-5.6395672252537015);
\draw [line width=1.pt,color=qqqqff] (-7.281423690604183,-0.9063605673234938)-- (-7.789633123003725,-4.583203975644266);
\draw [line width=1.pt,color=qqqqff] (-5.441177776404202,0.1500026822859417)-- (-4.93296834400466,3.8268460906067134);
\draw [line width=1.pt,color=qqqqff] (-7.865054483207117,2.4630129672759318)-- (-7.356845050807575,6.139856375596704);
\draw [line width=1.pt,color=qqqqff] (-9.705300397407097,1.4066497176664963)-- (-9.197090965007556,5.083493125987268);
\draw [line width=1.pt,color=qqqqff] (-12.129177104210012,3.7196600026564863)-- (-11.62096767181047,7.3965034109772585);
\draw [line width=1.pt,color=qqqqff] (-11.62096767181047,7.3965034109772585)-- (-11.11275823941093,11.073346819298031);
\draw [line width=1.pt,color=qqqqff] (-11.696389032013862,14.442720353897453)-- (-11.18817959961432,18.119563762218224);
\draw [line width=1.pt,color=qqqqff] (-9.347933685414342,19.17592701182766)-- (-9.856143117813883,15.49908360350689);
\draw [line width=1.pt,color=qqqqff] (-9.272512325210947,12.129710068907464)-- (-9.78072175761049,8.452866660586693);
\draw [line width=1.pt,color=qqqqff] (-6.924056978611427,16.86291672683767)-- (-7.432266411010968,13.1860733185169);
\draw [line width=1.pt,color=qqqqff] (-5.083811064411446,17.919279976447108)-- (-5.592020496810987,14.242436568126337);
\draw [line width=1.pt,color=qqqqff] (-5.008389704208053,10.873063033526911)-- (-5.516599136607594,7.1962196252061394);
\draw [line width=1.pt,color=qqqqff] (-6.848635618408033,9.816699783917475)-- (-7.356845050807575,6.139856375596704);
\draw [line width=1.pt,color=qqqqff] (-4.857546983801268,-3.219370852313484)-- (-5.36575641620081,-6.896214260634256);
\draw [line width=1.pt,color=qqqqff] (-3.017301069601287,-2.1630076027040483)-- (-3.5255105020008286,-5.8398510110248205);
\draw [line width=1.pt,color=qqqqff] (-1.1770551554013062,-1.1066443530946128)-- (-0.6688457230017648,2.570199055226159);
\draw [line width=1.pt,color=qqqqff] (-3.168143790008072,11.929426283136348)-- (-2.6599343576085306,15.606269691457118);
\draw [line width=1.pt,color=qqqqff] (-3.0927224298046796,4.883209340216149)-- (-2.584512997405138,8.560052748536922);
\draw [line width=1.pt,color=qqqqff] (-1.2524765156046986,5.939572589825585)-- (-0.7442670832051572,9.616415998146357);
\draw [line width=1.pt,color=qqqqff] (-1.327897875808091,12.98578953274578)-- (-0.8196884434085496,16.66263294106655);
\draw [line width=1.pt,color=qqqqff] (1.0959788309948237,10.672779247755791)-- (1.6041882633943652,14.349622656076562);
\draw [line width=1.pt,color=qqqqff] (2.9362247451948047,11.729142497365228)-- (3.444434177594346,15.405985905685998);
\draw [line width=1.pt,color=qqqqff] (0.6631907587986747,-0.050281103485177336)-- (1.1714001911982161,3.6265623048355944);
\draw [line width=1.pt,color=qqqqff] (-0.5934243627983724,-4.476017887694038)-- (-1.1016337951979138,-8.15286129601481);
\draw [line width=1.pt,color=qqqqff] (1.2468215514016086,-3.4196546380846025)-- (0.7386121190020671,-7.096498046405374);
\draw [line width=1.pt,color=qqqqff] (3.0870674656015895,-2.363291388475167)-- (3.595276898001131,1.3135520198456048);
\draw [line width=1.pt,color=qqqqff] (5.435522812201112,2.3699152694550403)-- (4.9273133798015705,-1.3069281388657314);
\draw [line width=1.pt,color=qqqqff] (3.6706982582045233,-5.7326649230745925)-- (3.162488825804982,-9.409508331395365);
\draw [line width=1.pt,color=qqqqff] (5.002734740004962,-8.353145081785929)-- (5.510944172404504,-4.676301673465156);
\draw [line width=1.pt,color=qqqqff] (6.842980654204943,-7.296781832176492)-- (7.351190086604485,-3.6199384238557206);
\draw [line width=1.pt,color=qqqqff] (7.351190086604485,-3.6199384238557206)-- (7.859399519004027,0.0569049844650511);
\draw [line width=1.pt,color=qqqqff] (7.275768726401093,3.426278519064476)-- (7.783978158800634,7.1031219273852475);
\draw [line width=1.pt,color=qqqqff] (4.8518920195981785,5.739288804054466)-- (5.3601014519977195,9.416132212375238);
\draw [line width=1.pt,color=qqqqff] (5.3601014519977195,9.416132212375238)-- (5.8683108843972605,13.09297562069601);
\draw [line width=1.pt,color=qqqqff] (3.5198555377977385,8.359768962765802)-- (3.011646105398197,4.6829255544450294);
\draw [line width=1.pt,color=qqqqff] (0.5877693985952823,6.99593583943502)-- (1.0959788309948237,10.672779247755791);
\draw [line width=1.pt,color=qqqqff] (7.2003473661977,10.472495461984675)-- (7.708556798597241,14.149338870305446);
\draw [line width=1.pt,color=qqqqff] (9.624224073000615,8.159485176994682)-- (10.132433505400156,11.836328585315453);
\draw [line width=1.pt,color=qqqqff] (11.464469987200596,9.215848426604119)-- (11.972679419600137,12.89269183492489);
\draw [line width=1.pt,color=qqqqff] (9.116014640601074,4.482641768673911)-- (9.624224073000615,8.159485176994682);
\draw [line width=1.pt,color=qqqqff] (9.191436000804465,-2.563575174246285)-- (9.699645433204008,1.1132682340744866);
\draw [line width=1.pt,color=qqqqff] (11.539891347403989,2.1696314836839217)-- (12.04810077980353,5.846474892004693);
\draw [line width=1.pt,color=qqqqff] (13.38013726160397,3.225994733293357)-- (13.888346694003511,6.9028381416141285);
\draw [line width=1.pt,color=qqqqff] (11.61531270760738,-4.876585459236274)-- (12.123522140006921,-1.1997420509155026);
\draw [line width=1.pt,color=qqqqff] (11.10710327520784,-8.553428867557045)-- (11.61531270760738,-4.876585459236274);
\draw [line width=1.pt,color=qqqqff] (9.266857361007858,-9.609792117166482)-- (9.775066793407401,-5.932948708845711);
\draw [line width=1.pt,color=qqqqff] (13.455558621807361,-3.820222209626839)-- (13.963768054206902,-0.14337880130606706);
\draw [line width=1.pt,color=ccqqqq] (13.530979982010754,-10.866439152547034)-- (14.039189414410295,-7.1895957442262635);
\draw [line width=1.pt,color=ccqqqq] (-7.789633123003725,-4.583203975644266)-- (-9.629879037203706,-5.6395672252537015);
\draw [line width=1.pt,color=ccqqqq] (-7.281423690604183,-0.9063605673234938)-- (-9.121669604804165,-1.9627238169329293);
\draw [line width=1.pt,color=ccqqqq] (-7.865054483207117,2.4630129672759318)-- (-9.705300397407097,1.4066497176664963);
\draw [line width=1.pt,color=ccqqqq] (-9.705300397407097,1.4066497176664963)-- (-11.54554631160708,0.35028646805706076);
\draw [line width=1.pt,color=ccqqqq] (-12.129177104210012,3.7196600026564863)-- (-13.969423018409994,2.663296753047051);
\draw [line width=1.pt,color=ccqqqq] (-9.197090965007556,5.083493125987268)-- (-7.356845050807575,6.139856375596704);
\draw [line width=1.pt,color=ccqqqq] (-7.356845050807575,6.139856375596704)-- (-5.516599136607594,7.1962196252061394);
\draw [line width=1.pt,color=ccqqqq] (-6.848635618408033,9.816699783917475)-- (-5.008389704208053,10.873063033526911);
\draw [line width=1.pt,color=ccqqqq] (-5.008389704208053,10.873063033526911)-- (-3.168143790008072,11.929426283136348);
\draw [line width=1.pt,color=ccqqqq] (-3.168143790008072,11.929426283136348)-- (-1.327897875808091,12.98578953274578);
\draw [line width=1.pt,color=ccqqqq] (-2.584512997405138,8.560052748536922)-- (-0.7442670832051572,9.616415998146357);
\draw [line width=1.pt,color=ccqqqq] (-0.7442670832051572,9.616415998146357)-- (1.0959788309948237,10.672779247755791);
\draw [line width=1.pt,color=ccqqqq] (1.0959788309948237,10.672779247755791)-- (2.9362247451948047,11.729142497365228);
\draw [line width=1.pt,color=ccqqqq] (1.6041882633943652,14.349622656076562)-- (3.444434177594346,15.405985905685998);
\draw [line width=1.pt,color=ccqqqq] (3.444434177594346,15.405985905685998)-- (5.284680091794327,16.462349155295435);
\draw [line width=1.pt,color=ccqqqq] (5.284680091794327,16.462349155295435)-- (7.124926005994308,17.51871240490487);
\draw [line width=1.pt,color=ccqqqq] (5.8683108843972605,13.09297562069601)-- (7.708556798597241,14.149338870305446);
\draw [line width=1.pt,color=ccqqqq] (-9.347933685414342,19.17592701182766)-- (-11.18817959961432,18.119563762218224);
\draw [line width=1.pt,color=ccqqqq] (-13.536634946213844,13.38635710428802)-- (-11.696389032013862,14.442720353897453);
\draw [line width=1.pt,color=ccqqqq] (-11.696389032013862,14.442720353897453)-- (-9.856143117813883,15.49908360350689);
\draw [line width=1.pt,color=ccqqqq] (-11.11275823941093,11.073346819298031)-- (-9.272512325210947,12.129710068907464);
\draw [line width=1.pt,color=ccqqqq] (-7.432266411010968,13.1860733185169)-- (-5.592020496810987,14.242436568126337);
\draw [line width=1.pt,color=ccqqqq] (-7.432266411010968,13.1860733185169)-- (-9.272512325210947,12.129710068907464);
\draw [line width=1.pt,color=ccqqqq] (-6.924056978611427,16.86291672683767)-- (-5.083811064411446,17.919279976447108);
\draw [line width=1.pt,color=ccqqqq] (-2.6599343576085306,15.606269691457118)-- (-0.8196884434085496,16.66263294106655);
\draw [line width=1.pt,color=ccqqqq] (-11.62096767181047,7.3965034109772585)-- (-9.78072175761049,8.452866660586693);
\draw [line width=1.pt,color=ccqqqq] (-7.281423690604183,-0.9063605673234938)-- (-5.441177776404202,0.1500026822859417);
\draw [line width=1.pt,color=ccqqqq] (-5.441177776404202,0.1500026822859417)-- (-3.600931862204221,1.2063659318953772);
\draw [line width=1.pt,color=ccqqqq] (-3.0927224298046796,4.883209340216149)-- (-4.93296834400466,3.8268460906067134);
\draw [line width=1.pt,color=ccqqqq] (-4.857546983801268,-3.219370852313484)-- (-3.017301069601287,-2.1630076027040483);
\draw [line width=1.pt,color=ccqqqq] (-1.2524765156046986,5.939572589825585)-- (0.5877693985952823,6.99593583943502);
\draw [line width=1.pt,color=ccqqqq] (-3.0927224298046796,4.883209340216149)-- (-1.2524765156046986,5.939572589825585);
\draw [line width=1.pt,color=ccqqqq] (3.5198555377977385,8.359768962765802)-- (5.3601014519977195,9.416132212375238);
\draw [line width=1.pt,color=ccqqqq] (5.3601014519977195,9.416132212375238)-- (7.2003473661977,10.472495461984675);
\draw [line width=1.pt,color=ccqqqq] (7.708556798597241,14.149338870305446)-- (9.548802712797222,15.205702119914882);
\draw [line width=1.pt,color=ccqqqq] (10.132433505400156,11.836328585315453)-- (11.972679419600137,12.89269183492489);
\draw [line width=1.pt,color=ccqqqq] (7.783978158800634,7.1031219273852475)-- (9.624224073000615,8.159485176994682);
\draw [line width=1.pt,color=ccqqqq] (9.624224073000615,8.159485176994682)-- (11.464469987200596,9.215848426604119);
\draw [line width=1.pt,color=ccqqqq] (3.011646105398197,4.6829255544450294)-- (4.8518920195981785,5.739288804054466);
\draw [line width=1.pt,color=ccqqqq] (-0.6688457230017648,2.570199055226159)-- (1.1714001911982161,3.6265623048355944);
\draw [line width=1.pt,color=ccqqqq] (1.1714001911982161,3.6265623048355944)-- (3.011646105398197,4.6829255544450294);
\draw [line width=1.pt,color=ccqqqq] (-1.1770551554013062,-1.1066443530946128)-- (0.6631907587986747,-0.050281103485177336);
\draw [line width=1.pt,color=ccqqqq] (-3.017301069601287,-2.1630076027040483)-- (-1.1770551554013062,-1.1066443530946128);
\draw [line width=1.pt,color=ccqqqq] (-5.36575641620081,-6.896214260634256)-- (-3.5255105020008286,-5.8398510110248205);
\draw [line width=1.pt,color=ccqqqq] (-0.5934243627983724,-4.476017887694038)-- (1.2468215514016086,-3.4196546380846025);
\draw [line width=1.pt,color=ccqqqq] (1.2468215514016086,-3.4196546380846025)-- (3.0870674656015895,-2.363291388475167);
\draw [line width=1.pt,color=ccqqqq] (3.595276898001131,1.3135520198456048)-- (5.435522812201112,2.3699152694550403);
\draw [line width=1.pt,color=ccqqqq] (5.435522812201112,2.3699152694550403)-- (7.275768726401093,3.426278519064476);
\draw [line width=1.pt,color=ccqqqq] (7.275768726401093,3.426278519064476)-- (9.116014640601074,4.482641768673911);
\draw [line width=1.pt,color=ccqqqq] (3.0870674656015895,-2.363291388475167)-- (4.9273133798015705,-1.3069281388657314);
\draw [line width=1.pt,color=ccqqqq] (7.859399519004027,0.0569049844650511)-- (9.699645433204008,1.1132682340744866);
\draw [line width=1.pt,color=ccqqqq] (9.699645433204008,1.1132682340744866)-- (11.539891347403989,2.1696314836839217);
\draw [line width=1.pt,color=ccqqqq] (12.04810077980353,5.846474892004693)-- (13.888346694003511,6.9028381416141285);
\draw [line width=1.pt,color=ccqqqq] (11.539891347403989,2.1696314836839217)-- (13.38013726160397,3.225994733293357);
\draw [line width=1.pt,color=ccqqqq] (3.6706982582045233,-5.7326649230745925)-- (5.510944172404504,-4.676301673465156);
\draw [line width=1.pt,color=ccqqqq] (5.510944172404504,-4.676301673465156)-- (7.351190086604485,-3.6199384238557206);
\draw [line width=1.pt,color=ccqqqq] (7.351190086604485,-3.6199384238557206)-- (9.191436000804465,-2.563575174246285);
\draw [line width=1.pt,color=ccqqqq] (12.123522140006921,-1.1997420509155026)-- (13.963768054206902,-0.14337880130606706);
\draw [line width=1.pt,color=ccqqqq] (13.963768054206902,-0.14337880130606706)-- (15.804013968406883,0.9129844483033684);
\draw [line width=1.pt,color=ccqqqq] (-1.1016337951979138,-8.15286129601481)-- (0.7386121190020671,-7.096498046405374);
\draw [line width=1.pt,color=ccqqqq] (3.162488825804982,-9.409508331395365)-- (5.002734740004962,-8.353145081785929);
\draw [line width=1.pt,color=ccqqqq] (5.002734740004962,-8.353145081785929)-- (6.842980654204943,-7.296781832176492);
\draw [line width=1.pt,color=ccqqqq] (9.775066793407401,-5.932948708845711)-- (11.61531270760738,-4.876585459236274);
\draw [line width=1.pt,color=ccqqqq] (11.61531270760738,-4.876585459236274)-- (13.455558621807361,-3.820222209626839);
\draw [line width=1.pt,color=ccqqqq] (9.266857361007858,-9.609792117166482)-- (11.10710327520784,-8.553428867557045);
\draw [line width=1.pt,color=zzttff] (14.039189414410295,-7.1895957442262635)-- (15.879435328610276,-6.133232494616829);
\draw [line width=1.pt,color=zzttff] (-13.536634946213844,13.38635710428802)-- (-11.11275823941093,11.073346819298031);
\draw [line width=1.pt,color=zzttff] (-11.696389032013862,14.442720353897453)-- (-9.272512325210947,12.129710068907464);
\draw [line width=1.pt,color=zzttff] (-9.856143117813883,15.49908360350689)-- (-7.432266411010968,13.1860733185169);
\draw [line width=1.pt,color=zzttff] (-9.347933685414342,19.17592701182766)-- (-6.924056978611427,16.86291672683767);
\draw [line width=1.pt,color=zzttff] (-9.272512325210947,12.129710068907464)-- (-6.848635618408033,9.816699783917475);
\draw [line width=1.pt,color=zzttff] (-13.969423018409994,2.663296753047051)-- (-11.54554631160708,0.35028646805706076);
\draw [line width=1.pt,color=zzttff] (-12.129177104210012,3.7196600026564863)-- (-9.705300397407097,1.4066497176664963);
\draw [line width=1.pt,color=zzttff] (-11.62096767181047,7.3965034109772585)-- (-9.197090965007556,5.083493125987268);
\draw [line width=1.pt,color=zzttff] (-9.78072175761049,8.452866660586693)-- (-7.356845050807575,6.139856375596704);
\draw [line width=1.pt,color=zzttff] (-7.432266411010968,13.1860733185169)-- (-5.008389704208053,10.873063033526911);
\draw [line width=1.pt,color=zzttff] (-5.592020496810987,14.242436568126337)-- (-3.168143790008072,11.929426283136348);
\draw [line width=1.pt,color=zzttff] (-5.083811064411446,17.919279976447108)-- (-2.6599343576085306,15.606269691457118);
\draw [line width=1.pt,color=zzttff] (-0.8196884434085496,16.66263294106655)-- (1.6041882633943652,14.349622656076562);
\draw [line width=1.pt,color=zzttff] (3.444434177594346,15.405985905685998)-- (5.8683108843972605,13.09297562069601);
\draw [line width=1.pt,color=zzttff] (5.284680091794327,16.462349155295435)-- (7.708556798597241,14.149338870305446);
\draw [line width=1.pt,color=zzttff] (7.124926005994308,17.51871240490487)-- (9.548802712797222,15.205702119914882);
\draw [line width=1.pt,color=zzttff] (9.548802712797222,15.205702119914882)-- (11.972679419600137,12.89269183492489);
\draw [line width=1.pt,color=zzttff] (7.708556798597241,14.149338870305446)-- (10.132433505400156,11.836328585315453);
\draw [line width=1.pt,color=zzttff] (7.2003473661977,10.472495461984675)-- (9.624224073000615,8.159485176994682);
\draw [line width=1.pt,color=zzttff] (11.464469987200596,9.215848426604119)-- (13.888346694003511,6.9028381416141285);
\draw [line width=1.pt,color=zzttff] (9.624224073000615,8.159485176994682)-- (12.04810077980353,5.846474892004693);
\draw [line width=1.pt,color=zzttff] (-1.327897875808091,12.98578953274578)-- (1.0959788309948237,10.672779247755791);
\draw [line width=1.pt,color=zzttff] (2.9362247451948047,11.729142497365228)-- (5.3601014519977195,9.416132212375238);
\draw [line width=1.pt,color=zzttff] (1.0959788309948237,10.672779247755791)-- (3.5198555377977385,8.359768962765802);
\draw [line width=1.pt,color=zzttff] (-3.168143790008072,11.929426283136348)-- (-0.7442670832051572,9.616415998146357);
\draw [line width=1.pt,color=zzttff] (-5.008389704208053,10.873063033526911)-- (-2.584512997405138,8.560052748536922);
\draw [line width=1.pt,color=zzttff] (-5.516599136607594,7.1962196252061394)-- (-3.0927224298046796,4.883209340216149);
\draw [line width=1.pt,color=zzttff] (-7.356845050807575,6.139856375596704)-- (-4.93296834400466,3.8268460906067134);
\draw [line width=1.pt,color=zzttff] (-7.865054483207117,2.4630129672759318)-- (-5.441177776404202,0.1500026822859417);
\draw [line width=1.pt,color=zzttff] (-9.705300397407097,1.4066497176664963)-- (-7.281423690604183,-0.9063605673234938);
\draw [line width=1.pt,color=zzttff] (-11.54554631160708,0.35028646805706076)-- (-9.121669604804165,-1.9627238169329293);
\draw [line width=1.pt,color=zzttff] (-12.05375574400662,-3.3265569402637114)-- (-9.629879037203706,-5.6395672252537015);
\draw [line width=1.pt,color=zzttff] (-7.789633123003725,-4.583203975644266)-- (-5.36575641620081,-6.896214260634256);
\draw [line width=1.pt,color=zzttff] (-7.281423690604183,-0.9063605673234938)-- (-4.857546983801268,-3.219370852313484);
\draw [line width=1.pt,color=zzttff] (-5.441177776404202,0.1500026822859417)-- (-3.017301069601287,-2.1630076027040483);
\draw [line width=1.pt,color=zzttff] (-3.600931862204221,1.2063659318953772)-- (-1.1770551554013062,-1.1066443530946128);
\draw [line width=1.pt,color=zzttff] (-3.0927224298046796,4.883209340216149)-- (-0.6688457230017648,2.570199055226159);
\draw [line width=1.pt,color=zzttff] (-1.2524765156046986,5.939572589825585)-- (1.1714001911982161,3.6265623048355944);
\draw [line width=1.pt,color=zzttff] (0.5877693985952823,6.99593583943502)-- (3.011646105398197,4.6829255544450294);
\draw [line width=1.pt,color=zzttff] (-3.0927224298046796,4.883209340216149)-- (-3.600931862204221,1.2063659318953772);
\draw [line width=1.pt,color=zzttff] (-3.5255105020008286,-5.8398510110248205)-- (-1.1016337951979138,-8.15286129601481);
\draw [line width=1.pt,color=zzttff] (-3.017301069601287,-2.1630076027040483)-- (-0.5934243627983724,-4.476017887694038);
\draw [line width=1.pt,color=zzttff] (-1.1770551554013062,-1.1066443530946128)-- (1.2468215514016086,-3.4196546380846025);
\draw [line width=1.pt,color=zzttff] (0.6631907587986747,-0.050281103485177336)-- (3.0870674656015895,-2.363291388475167);
\draw [line width=1.pt,color=zzttff] (1.1714001911982161,3.6265623048355944)-- (3.595276898001131,1.3135520198456048);
\draw [line width=1.pt,color=zzttff] (3.011646105398197,4.6829255544450294)-- (5.435522812201112,2.3699152694550403);
\draw [line width=1.pt,color=zzttff] (4.8518920195981785,5.739288804054466)-- (7.275768726401093,3.426278519064476);
\draw [line width=1.pt,color=zzttff] (5.3601014519977195,9.416132212375238)-- (7.783978158800634,7.1031219273852475);
\draw [line width=1.pt,color=zzttff] (0.7386121190020671,-7.096498046405374)-- (3.162488825804982,-9.409508331395365);
\draw [line width=1.pt,color=zzttff] (1.2468215514016086,-3.4196546380846025)-- (3.6706982582045233,-5.7326649230745925);
\draw [line width=1.pt,color=zzttff] (3.0870674656015895,-2.363291388475167)-- (5.510944172404504,-4.676301673465156);
\draw [line width=1.pt,color=zzttff] (4.9273133798015705,-1.3069281388657314)-- (7.351190086604485,-3.6199384238557206);
\draw [line width=1.pt,color=zzttff] (5.435522812201112,2.3699152694550403)-- (7.859399519004027,0.0569049844650511);
\draw [line width=1.pt,color=zzttff] (7.275768726401093,3.426278519064476)-- (9.699645433204008,1.1132682340744866);
\draw [line width=1.pt,color=zzttff] (9.116014640601074,4.482641768673911)-- (11.539891347403989,2.1696314836839217);
\draw [line width=1.pt,color=zzttff] (13.38013726160397,3.225994733293357)-- (15.804013968406883,0.9129844483033684);
\draw [line width=1.pt,color=zzttff] (11.539891347403989,2.1696314836839217)-- (13.963768054206902,-0.14337880130606706);
\draw [line width=1.pt,color=zzttff] (9.699645433204008,1.1132682340744866)-- (12.123522140006921,-1.1997420509155026);
\draw [line width=1.pt,color=zzttff] (9.191436000804465,-2.563575174246285)-- (11.61531270760738,-4.876585459236274);
\draw [line width=1.pt,color=zzttff] (13.455558621807361,-3.820222209626839)-- (15.879435328610276,-6.133232494616829);
\draw [line width=1.pt,color=zzttff] (11.61531270760738,-4.876585459236274)-- (14.039189414410295,-7.1895957442262635);
\draw [line width=1.pt,color=zzttff] (11.10710327520784,-8.553428867557045)-- (13.530979982010754,-10.866439152547034);
\draw [line width=1.pt,color=zzttff] (9.775066793407401,-5.932948708845711)-- (7.351190086604485,-3.6199384238557206);
\draw [line width=1.pt,color=zzttff] (6.842980654204943,-7.296781832176492)-- (9.266857361007858,-9.609792117166482);
\end{tikzpicture}
  \caption{Tiling corresponding to the multigrid of Figure \ref{fig:test1}. Coloured “rails” of tiles correspond to the highlighted lines from Figure \ref{fig:test1}.}
  \label{fig:test2}
\end{minipage}
\end{figure}

\subsubsection{Quasicrystal tiling in the “primal space”}\label{sec_qctiling}

 We next associate to a multigrid as in \eqref{mgrid} a tiling of $\mathbb{R}^d$ by parallelotopes, which will constitute our quasicrystal in the
“primal” space.  In particular to every intersection point in the multigrid 
\begin{equation}\label{eq:xJk}
x=x(J,k_J)=\bigcap_{g\in J} H(g,k_g,\gamma_g)\qquad J\subset \G,\, \sharp J=d,\quad k\in \Z^J
\end{equation}
we associate a parallelotope $P(x)$ of the tiling. 
\medskip

{\bf Convention. } We will interpret $J\subset \G$ not just as a set of vectors $g$ but as an \emph{ordered set}. This allows to denote without ambiguity $\det(J):=\det(g_1,\dots,g_d)$, if $J=\{g_1,\dots,g_d\}$. To do this, we fix once and for all an ordering of $\G$ and for each subset $J\subset\G$ we will always order its elements in increasing order with respect to this ordering.
\medskip

Before explaining the construction we need to introduce a further datum, namely a map 
$\mathcal G\ni g\mapsto \widetilde g\in \R^d$ (the collection of $\widetilde g$ will give the possible directions of the $1$-dimensional edges of the parallelotopes $P(x)$ cf. Figure \ref{fig:test2}), on which we require the following compatibility condition: 
\begin{equation}\label{detcond}
\det(g_1,\ldots,g_d)\det(\widetilde g_1,\ldots,\widetilde g_d)>0 \text{ for any choice }g_1,\dots g_d\in \mathcal G.
\end{equation}
Note that this implies that $g\mapsto \widetilde g$ is
bijective.
For the constrution it would be equivalent to require the product to be always negative, the important thing being that the sign does not depend on $g_1,\ldots,g_d$.
We observe that a standard choice to produce Penrose tilings in the plane is to set $\widetilde g=g^\perp$.

\medskip
The vectors $\widetilde g$, in conjunction with the multigrid $M(\mathcal G, \gamma)$ above,
define a tiling of $\R^d$ as follows. To every $x=x(J,k_J)$ we associate the unique vector $k=k_{\mathcal G}\in\Z^{\mathcal G}$ which “extends” $k_J$ and satisfies the {\it admissibility condition}
\begin{equation}\label{ptinslab}
    x(J,k_J)\in\bigcap_{g\in \mathcal G\setminus J} S_{g,k_g},
\end{equation}
and we consider the parallelotope $P(x)\subset \R^d$ defined as a Minkowski sum as follows:
\begin{equation}\label{parallelotopes}
    P(x):=\sum_{g\in\mathcal G}k_g\widetilde g + \sum_{g\in J}[0,1]\widetilde g\qquad\text{where $x=x(J,k_J)$ and $k\in\Z^\G$ satisfies \eqref{ptinslab}} 
\end{equation}
The condition \eqref{detcond} ensures that the union of all such tiles is a tiling of $\R^d$. This is claimed in \cite[p. 270]{GahRhy86} for dimensions up to $3$, where the authors say that they believe the same criterion holds in general dimension but did not check this in detail. In the case of Penrose tilings, a proof is also present in \cite{de1981algebraic}, for very specific choices $g, \widetilde g$. We provide a proof in the general case in Proposition \ref{prop:qctiling}.
\medskip

{\bf Notations.} 
We denote the set of all possible vertices $x(J, k_J)$ as in
\eqref{eq:xJk} by $\mathcal X=\mathcal X(\mathcal G,\gamma)$ and the set of tiles 
$P(x)$ defined as in \eqref{parallelotopes} associated to 
$x\in\mathcal X(\mathcal G,\gamma)$ by 
$\mathcal T=\mathcal T(\mathcal G,\gamma,\widetilde{\mathcal G})$.

\subsubsection{Bounded distortion between primal and dual space configurations}\label{sec_bddist}
For $X,Y\subset \R^d$ a map $\phi:X\to Y$ has {\it bounded distortion} (or is a $BD$ map) 
if $\sup_{x\in X}|\phi(x)-x|<+\infty$. We say that $Y$ is $BD$ to $X$ if 
there exists a $BD$ bijection $\phi:X\to Y$. Following the above notation, 
we will show that the bijection that relates the points $x(J, k_J)$ 
as in \eqref{ptinslab} to the centers of the corresponding parallelotopes
\eqref{parallelotopes} from the associated tiling, is a $BD$ map, up to an affine distortion:

\begin{lemma}\label{bda}
Let $\mathcal X$ and $\mathcal T$ be the admissible multigrid points and tiles 
associated to $M(\mathcal G, \gamma)$ and to $\widetilde{\mathcal G}$ as above.
Let $A:\R^d\to\R^d $ be the affine map defined by
\begin{equation}\label{eq:A}
Ax=\sum_{g\in\G} \tfrac{1}{|g|}(\langle x,\tfrac{g}{|g|}\rangle -\gamma_g)\widetilde g
\end{equation}
and let $\phi:\mathcal X\to\R^d$ be the map associating to a vertex $x\in \mathcal X$ the center of the corresponding parallelotope $P(x)$, namely
\[
\phi(x)=\sum_{g\in\mathcal G }k_g\widetilde g + \frac12\sum_{g\in J}\widetilde g
\]
where $x=x(J,k_J)$ as in \eqref{eq:xJk} and $k\in\Z^\G$ satisfies \eqref{ptinslab}.
Then 
\[
\sup_{x\in\mathcal X}|\phi(x)-Ax|<\infty.
\]
Moreover, assuming the condition \eqref{detcond}, the map $A$ is invertible.
\end{lemma}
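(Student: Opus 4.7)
The plan is to treat the two claims separately; both reduce to direct computations once we unpack the definitions.

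For the bounded distortion estimate, I will substitute the two defining conditions for $x=x(J,k_J)\in\mathcal X$ into the formula \eqref{eq:A}. For $g\in J$ we have exactly $\langle x,g/|g|\rangle-\gamma_g = k_g|g|$, while for $g\in\mathcal G\setminus J$ the admissibility condition \eqref{ptinslab} forces $\langle x,g/|g|\rangle-\gamma_g\in(k_g,k_g+1)|g|$, so that I can write $\tfrac{1}{|g|}(\langle x,g/|g|\rangle-\gamma_g)=k_g+\theta_g(x)$ with $\theta_g(x)\in(0,1)$. Plugging these into \eqref{eq:A} and subtracting the formula for $\phi(x)$, the common term $\sum_{g\in\mathcal G} k_g\widetilde g$ cancels and I am left with
\[
\phi(x)-Ax = \tfrac12\sum_{g\in J}\widetilde g \;-\; \sum_{g\in\mathcal G\setminus J}\theta_g(x)\,\widetilde g,
\]
whose norm is bounded by $\sum_{g\in\mathcal G}|\widetilde g|$ independently of $x$.

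For invertibility, I will work with the linear part $L$ of the affine map $A$, namely $Lx=\sum_{g\in\mathcal G}|g|^{-2}\langle x,g\rangle\,\widetilde g$. Writing $G,\widetilde G\in\mathbb R^{d\times n}$ for the matrices whose columns are the vectors $g$ and $\widetilde g$ (in the fixed ordering of $\mathcal G$, with $n=\#\mathcal G$) and $D=\mathrm{diag}(|g|^{-2})$, we have $L=\widetilde G\,D\,G^T$. The Cauchy–Binet formula then yields
\[
\det L = \sum_{\substack{J\subset\mathcal G\\ |J|=d}} \Bigl(\prod_{g\in J}|g|^{-2}\Bigr)\,\det(\widetilde g_J)\,\det(g_J),
\]
where $g_J,\widetilde g_J$ denote the ordered $d$-tuples of columns indexed by $J$. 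The compatibility assumption \eqref{detcond} makes every summand non-negative, and the general position assumption on the multigrid (any $d$ hyperplanes meet in a single point) guarantees that at least one $d$-subset $J\subset\mathcal G$ has $\det(g_J)\neq 0$, so the corresponding summand is strictly positive. Hence $\det L>0$ and $A$ is invertible.

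The only delicate point will be making sure the ordering convention fixed on $\mathcal G$ is used consistently throughout the Cauchy–Binet computation so that the signs of $\det(\widetilde g_J)$ and $\det(g_J)$ in each summand are those controlled by \eqref{detcond}; this however is automatic from the paper's standing convention that subsets $J\subset\mathcal G$ inherit the global order.
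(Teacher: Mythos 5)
Your proposal is correct and follows essentially the same route as the paper: the bounded-distortion estimate comes from writing $\tfrac{1}{|g|}(\langle x,g/|g|\rangle-\gamma_g)=k_g+\theta_g(x)$ with $\theta_g\in(0,1)$ for $g\notin J$ (exactly the floor/fractional-part decomposition the paper uses), and invertibility comes from the same Cauchy--Binet expansion of the linear part, with \eqref{detcond} forcing all summands to have a common sign. The only cosmetic difference is that \eqref{detcond} already makes every summand strictly positive, so your extra appeal to general position for a nonvanishing $\det(g_J)$ is harmless but not needed.
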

\begin{proof}
From the compatibility condition \eqref{detcond} and the Cauchy-Binet formula for the determinant we immediately obtain the invertibility of $A$. Indeed we can write
\[
Ax=\sum_{g\in\mathcal G}\left(\left\langle x, \frac{g}{|g|}\right\rangle - \gamma_g\right)\frac{\widetilde g}{|g|}=\mathbb{G}\mathbb{\widetilde{ G}}^\top x -\sum_{g\in\mathcal G}\frac{\gamma_g}{|g|}\widetilde g,
\]
where $\mathbb G$ and $\mathbb{ \widetilde {G}}$ are the $d\times (\sharp \G)$ matrices with columns respectively running through $(\frac{g}{|g|})_{g\in\G}$ and $(\frac{\widetilde g}{|g|})_{g\in \G}$ (in the same order with respect to the mapping $\sim$). Now Cauchy-Binet formula gives that
\begin{equation}\label{eq:CauchyBinet}
\det(\mathbb G \mathbb{\widetilde{G}}^\top)=\sum_{J} \det(\mathbb G_J)\det(\mathbb{\widetilde{G}}_J)
\end{equation}
where $J$ runs through all subsets of $\{1,\ldots,\sharp \G\}$ with $d$ elements and where $\mathbb G_J$ stands for the $d\times d$ minor of $\mathbb G$ obtained considering only the columns corresponding to $J$. The condition \eqref{detcond} implies that every term in the right hand side of the above expression has the same sign (the sign of the determinant does not change if we multiply every column by a positive factor), and therefore we obtain the invertibility of $A$.

Next, if $x$ satisfies \eqref{ptinslab} then $k_g=\lfloor \tfrac{1}{|g|}\langle x,\tfrac{g}{|g|}\rangle-\gamma_g \rfloor$. As a consequence, denoting by $\{\cdot\}$ the fractional part, we have
\begin{eqnarray}\label{tildeayreal}
\phi(x)-Ax&=&\sum_{g\in\mathcal G\setminus J} \left\{\tfrac{1}{|g|}\langle x,\tfrac{g}{|g|}\rangle-\gamma_g \right\}\widetilde g+ \frac12\sum_{g\in J}\widetilde g\nonumber\\
 &\in& \mathrm{Int}\left(\sum_{g\in\mathcal G\setminus J}[0,1]\widetilde g 
 + \frac12\sum_{g\in J}\widetilde g\right).
 \end{eqnarray}
 The maximum modulus of the right hand side of \eqref{tildeayreal} 
is achieved at one of the vertices of the convex polytope on the right, 
 and this leads directly to
\[
\sup_{x\in \mathcal X} |\phi(x)-Ax|\leq  \max\left\{ \left|\sum_{g'\in J'}\widetilde g'
            +\frac12\sum_{g\in J}\widetilde g\right|:\ 
            \begin{array}{l}J,J'\subset\mathcal G\\ 
            J\cap J'=\emptyset\\ \sharp J=d\end{array}
    \right\}.
\]
The conclusion follows.
\end{proof}

The previous result allows us to transfer the problem from the dual to the primal space with a finite error. Roughly speaking, since we rescale down the space to find the Wulff shape, this finite error asymptotically vanishes in the rescaling, and this implies that the Wulff shape in the dual and in the primal space will be the same, up to an affine map.

\medskip
We next prove the following:

\begin{proposition}\label{prop:qctiling}
If \eqref{detcond} holds then the parallelotopes \eqref{parallelotopes} form a tiling of $\mathbb R^d$ which generates a polyhedral complex dual to the one generated by the hyperplane arrangement $M(\mathcal C, \gamma)$.
\end{proposition}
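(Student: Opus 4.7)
The strategy is to construct an explicit "staircase" map $\Phi : \mathbb{R}^d \to \mathbb{R}^d$ that is constant on each open $d$-cell of the arrangement $M(\mathcal{G},\gamma)$ and to use the invertibility of the affine map $A$ from Lemma \ref{bda} (where condition \eqref{detcond} is crucial, via the Cauchy–Binet identity \eqref{eq:CauchyBinet}) to upgrade the local duality at each multigrid vertex to a global tiling property.

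Concretely, set $u_g(y) := (\langle y, g/|g|\rangle - \gamma_g)/|g|$ and
\[
\Phi(y) := \sum_{g\in\mathcal{G}} \lfloor u_g(y)\rfloor\,\widetilde g,\qquad A(y) - \Phi(y) = \sum_{g\in\mathcal{G}} \{u_g(y)\}\,\widetilde g \;\in\; \sum_{g\in\mathcal{G}}[0,1)\widetilde g.
\]
Then $\Phi$ is constant on each open $d$-cell $R$ of the arrangement, and crossing a single hyperplane $H_{g,k}$ changes $\Phi$ by $\pm\widetilde g$. At a multigrid vertex $x = x(J,k_J)$, the $2^d$ open $d$-cells sharing $x$ are indexed by subsets $I\subseteq J$ recording on which side of each $H_{g,k_g}$, $g\in J$, the cell sits. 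A direct computation shows that the corresponding $2^d$ values of $\Phi$, namely $\sum_{g\in\mathcal{G}\setminus J} k_g \widetilde g + \sum_{g\in J}(k_g - 1 + \mathbf{1}_I(g))\widetilde g$, are exactly the vertices of $P(x)$ up to a common translation. Hence $P(x)$ coincides with the convex hull of the $\Phi$-images of the $d$-cells of the arrangement adjacent to $x$.

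To establish the tiling, I would argue as follows. \emph{Covering:} for generic $z\in\mathbb{R}^d$ the point $y := A^{-1}(z)$ is well-defined by invertibility of $A$ and lies in a unique open $d$-cell $R$. Since $z - \Phi(y) = \sum_g \{u_g(y)\}\widetilde g$, $z$ lies in the large parallelotope $\Phi(R) + \sum_g[0,1]\widetilde g$, and reading off the $d$ indices $g$ for which $\{u_g(y)\}$ is closest to $0$ or $1$ as $y$ moves towards the relevant face of $\overline R$ selects exactly the subset $J$ — hence the unique vertex $x(J,k_J)$ of $\overline R$ — such that $z\in P(x(J,k_J))$. \emph{Disjointness:} if $z\in\mathrm{Int}(P(x))\cap\mathrm{Int}(P(x'))$, then $A^{-1}(z)$ lies in a single open $d$-cell whose local adjacency data (together with the direction of the offset $\sum_g\{u_g\}\widetilde g$) determine $(J,k_J)$ uniquely, forcing $x=x'$. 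The duality of complexes then extends to all dimensions by the same local analysis: a codimension-$j$ face of the arrangement lying on hyperplanes $\{H_{g_\alpha,k_\alpha}\}_{\alpha=1}^j$ corresponds to the $j$-dimensional tile face $\sum_{g\in\mathcal{G}} k_g \widetilde g + \sum_{\alpha=1}^{j}[0,1]\widetilde g_\alpha$, with face inclusions reversed.

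The main obstacle is the covering step: verifying that as $y$ ranges over an open $d$-cell $R$, the point $z=A(y)$ sweeps out a region which is \emph{exactly} partitioned by the interiors of the tiles $P(x)$ attached to vertices $x$ of $\overline R$, with neither gaps nor overlaps. This is where condition \eqref{detcond} plays an essential role: the sign-definiteness of each term in \eqref{eq:CauchyBinet} prevents the orientation reversals that would otherwise cause two parallelotopes associated to distinct multigrid vertices to overlap, or would leave uncovered gaps between them.
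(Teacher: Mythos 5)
Your setup is sound and is essentially the same local duality that the paper establishes by other means: your staircase map $\Phi(y)=\sum_g\lfloor u_g(y)\rfloor\widetilde g$ is constant on open $d$-cells, jumps by $\pm\widetilde g$ across $H_{g,k}$, and its $2^d$ values on the cells around a vertex $x(J,k_J)$ are precisely the vertices of a translate of $P(x)$ (note the translate: your list of values is $P(x)-\sum_{g\in J}\widetilde g$, not $P(x)$ itself, which already matters when you ask which tile contains $A(y)$). However, the covering and disjointness steps --- which are the entire content of the proposition --- are not actually proved, and you say so yourself (``the main obstacle is the covering step''). Concretely: (a) the selection rule ``read off the $d$ indices $g$ for which $\{u_g(y)\}$ is closest to $0$ or $1$'' is not a well-defined procedure, and no argument is given that it picks out an \emph{admissible} vertex $x(J,k_J)$ of $\overline R$ (one satisfying \eqref{ptinslab}) with $z\in P(x)$; the zonotope $\Phi(R)+\sum_g[0,1]\widetilde g$ admits many fine decompositions into parallelotopes $\sum_{g\in J}[0,1]\widetilde g$, and matching the right one to the combinatorics of the arrangement is the whole problem. (b) The disjointness argument tacitly assumes that $z\in\mathrm{Int}\,P(x)$ forces $A^{-1}(z)$ to lie in a $d$-cell adjacent to $x$. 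This is false: by Lemma \ref{bda} the tile $P(x)$ sits only within \emph{bounded} (not zero) distance of $Ax$, so $A^{-1}(z)$ can lie several cells away from $x$, and the local adjacency data of the cell containing $A^{-1}(z)$ does not determine which tiles contain $z$. A purely cell-local analysis therefore cannot exclude overlaps between tiles attached to non-adjacent vertices, nor gaps between them.

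The paper closes exactly this gap with a global topological argument that your sketch gestures at but does not carry out: it glues the local star-to-tile isomorphisms (built from barycentric subdivisions) into a single continuous piecewise affine map $\bar\phi:\R^d\to\R^d$, uses \eqref{detcond} to show $\bar\phi$ is orientation-preserving on each star, uses the bounded-distance comparison with the invertible affine map $A$ to compute that the winding degree of $\bar\phi$ on large cycles equals $1$, and concludes $\sharp\,\bar\phi^{-1}(y)\le\deg(\bar\phi,y,\partial M)=1$, which gives both covering and injectivity at once. Some global input of this kind (degree theory, or an equivalent induction/sweep argument) is unavoidable; to complete your proof you would need to interpolate your $\Phi$ (or the vertex map $\phi$) to a continuous orientation-coherent map and run the same degree count, at which point your argument becomes the paper's.
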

The beginning of the proof is standard within the theory of polyhedral complexes, however we include it for the sake of keeping the presentation self-contained and more transparent.
\begin{proof} Let $K$ be the polyhedral complex generated by the hyperplane arrangement $\{H_{k,g}: k\in \Z^{\mathcal G}\}$. We follow a standard construction in order to fix an explicit realization of the dual complex of $K$ in the dual space. To do this, first consider the barycentric subdivision $K_1$ of $K$, i.e. the rectilinear simplicial complex with one vertex at the barycenter of each $k$-cell of $K$, for each $k=0,\dots,d$, and whose $d$-cells are formed by the barycenters of $f_0,\dots,f_d$ for all choices of facets $f_0,\dots,f_d$ of $K$ such that $f_0\subsetneq f_1\subsetneq\cdots\subsetneq f_d$. Now the $d$-cell of the dual complex $K^*$ of $K$ corresponding to vertex $x$ can be identified with the union of cells of $K_1$ which contain $x$; these cells of $K_1$ form the so-called “star” of $x$, denoted $\mathrm{Star}(x)$. Lower-dimensional cells of $K^*$ can then be obtained by intersecting $d$-cells. 

\medskip Due to the assumption on our multigrid constants $\gamma$, which is chosen so that no more than $d$ hyperplanes $H_{g,k}$ intersect (see the end of Section \ref{ssec:mgds}), the star in $K_1$ of each vertex $x$ of $K$ is a polyhedral cell complex isomorphic to the one given by the barycentric subdivision of the facet complex of parallelotope $P(x)$, denoted by $K_1(P(x))$. This fact can be proved by induction on $d$, together with the fact that the complex with cells $P(x)$ is combinatorially dual to the one generated by the hyperplanes $H_{g,k}$ and we leave the details to the reader. 

\medskip For each $x$ vertex of $K$, from the cell-complex map $\phi_x:\mathrm{Star}(x)\to K_1(P(x))$ we can define a map $\bar \phi_x:\bigcup \mathrm{Star}(x)\to P(x)$ which sends cells to cells, respects cell inclusions and is affine on the interiors of cells of $\mathrm{Star}(x)$ of any dimension. As $\bar\phi_x$ is a piecewise affine bijection, it thus has Jacobian of constant sign. Furthermore, this sign is equal to the sign of $\det(g:\ g\in J)\det(\widetilde g:\ g\in J)$, which is constant by assumption \eqref{detcond}.

\medskip We now glue the $\bar \phi_x$ to find a continuous piecewise affine map $\bar\phi:\mathbb R^d\to \mathbb R^d$. For this map to be continuous, we need to verify a compatibility condition. Let $x\neq x'\in \mathcal X$ be such that $\mathrm{Star}(x)\cap\mathrm{Star}(x')\neq\emptyset$. Equivalently, $x,x'$ are vertices of the same cell of $K$. In this case, let $j$ be the lowest dimension of a cell of $K$ containing both $x$ and $x'$. This means that we find an index $k^{\mathcal G}\in\mathbb Z^{\mathcal G}$ such that the slabs $S_{g,k_g}, g\in\mathcal G$ as in \eqref{mgslab} all contain $x,x'$ either in their interior or in their boundaries, and there are precisely $j$ choices of $g\in\mathcal G$ such that $x,x'$ are in the same boundary component of $S_{g,k_g}$. From definition \eqref{parallelotopes} it follows that $P(x)\cap P(x')$ have a common $(d-j)$-dimensional facet. By further inspection of indices, we see that then both $\phi_x$ and $\phi_{x'}$ send $\mathrm{Star}(x)\cap \mathrm{Star}(x')$ to the barycentric subdivision of this common facet, and thus $\bar\phi_x$ and $\bar\phi_{x'}$ coincide on $\bigcup(\mathrm{Star}(x)\cap \mathrm{Star}(x'))$, as desired.

\medskip By estimating bilipschitz constants, we find that $\bar\phi$ is at bounded distance from $\phi$ from Lemma \ref{bda}, and this shows that $\bar\phi$ is at bounded distance from the affine bijective map $A$ from the same lemma, and thus it is surjective. Thus $\bar\phi$ is a piecewise affine surjective map of $\mathbb R^d$ such that for each $x\in\mathcal X$ the restriction $\bar\phi|_{\bigcup\mathrm{Star}(x)}$ is bilipschitz with its image and (by \eqref{detcond}) preserves orientation. 

\medskip If we had a continuous covering from $\R^d$ to $\R^d$, it would have to be injective because $\R^d$ is simply connected. For our $\bar \phi$, we cannot ensure that it is a homeomorphism locally near the boundary of $\bigcup \mathrm{Star}(x)$ for $x\in\mathcal X$, however the degree-theoretic proof based on preserved orientations still works, as follows. If we take a large cycle $\Sigma=\partial M$ in $K$ around $y\in\mathbb R^d$, we have that the winding degree satisfies $\mathrm{deg}(\bar\phi, y,\partial M)=1$, where $\mathrm{deg}(\bar\phi, y,\partial M)$ is defined as the winding number of $\frac{\bar\phi(y')-\bar\phi(y)}{|\bar\phi(y')-\bar\phi(y)|}$ around the origin as $y'\in \partial M$. The fact that this degree is $1$ for $\partial M$ far a way from $y$ follows by approximation, from the fact that $\bar\phi$ is a bounded distortion of an affine bijective map. Thus by degree theory (for a proof see \cite[Thm. 4]{lipman2014bijective}) we have for any 
\[
\sharp \{\bar\phi^{-1}(y)\}\le \mathrm{deg}(\bar\phi, y,\partial M)=1,
\]
and thus bijectivity of $\bar\phi$ follows, and in particular the $P(x),x\in\mathcal X$ give a tiling, as desired.
\end{proof}

\subsubsection{Multigrid lines and parallelotope rails}\label{ssec_rails}

We will denote the normalized directions of lines obtained by intersecting $d-1$
hyperplanes of $M(\mathcal G,\gamma)$ as follows:
\begin{equation}\label{lines}
    \mathcal V:=\left\{ v\in \R^d, |v|=1\left|\begin{array}{cc} 
    \exists g_1,\ldots,g_{d-1}\in \mathcal G,\ \exists c\in\R^d\\[2mm]
    \bigcap_{j=1}^{d-1}H_{0,g_j}=\R v+c,\mbox{ and }
    \det(g_1,\ldots,g_{d-1},v)>0,\end{array}\right.\right\}.
\end{equation}
Due to condition \eqref{detcond}, in fact for each line 
$\R v+c, v\in\mathcal V$, there exists exactly one choice of 
$J'_v:=\{g_1,\ldots,g_{d-1}\}$ as in \eqref{lines} and these vectors are linearly independent, therefore the normalisation condition on $v$ in \eqref{lines} is well-posed. 
Note that the requirement that there exists $c$ such that $ \bigcap_{j=1}^{d-1}H_{0,g_j}=\R v+c$
is equivalent to $v\perp g_j$ for $j=1,\ldots,d-1$.

\medskip

We note the following further properties:
\begin{enumerate}
    \item If $v\in\mathcal V$ has uniquely associated hyperplanes directions
    $J'_v=\{g_1,\ldots,g_{d-1}\}\subset \mathcal G$ as in \eqref{lines} (with indices respecting the induced ordering fixed on $\mathcal G$) then to $v\in\mathcal V$ 
    we associate in the primal space the vector $\widetilde v\in\R^d$ determined by the conditions
    \[
    \widetilde v\perp\widetilde g_j\quad\text{for } j=1,\ldots,d-1, 
    \quad |\widetilde v|=1\quad \mbox{ and }\quad  
    \det(\widetilde g_1,\ldots,\widetilde g_{d-1},\widetilde v)>0
    \]
    Again this is a good definition because, as seen before, the mapping 
    $\mathcal G\ni g\mapsto \widetilde g$ is bijective and because we assumed 
    the condition \eqref{detcond}. It follows that $\{\widetilde v:\ v\in\mathcal V\}$ are the normal vectors 
    of the faces of parallelotopes $P(k)$ as in \eqref{parallelotopes}, where $\widetilde v$ 
    is normal to the faces generated by $\widetilde g_1,\ldots, \widetilde g_{d-1}$. Also, 
    all normal vectors to the faces of $P(k)$ are generated in this way, up to a sign. 
    
    \item With the above notation, the successive intersections of $\R v$ with the 
    hyperplanes $H_{g,k}, g\in\mathcal G\setminus\{g_j\}_{j=1}^{d-1}$ correspond to 
    “parallelotope rails”, i.e. chains of parallelotopes $T=P(x)\in\mathcal T$ 
    (see \eqref{parallelotopes}), 
    in which neighboring parallelotopes have in common exactly the face with normal $\widetilde v$.
    \item Such rails were described for $d=2,3$ in \cite{IngSte89,Ho+89}. We will also call a {\bf dual rail} the set of collinear points in the dual space corresponding to tiles forming a rail.
\end{enumerate}

Note that dual rail directions are in direct correspondence with the vectors $v\in\mathcal V$, however in the primal space the vector $\widetilde v$ in general is {\bf not} giving the direction of the parallelotope rail corresponding to $v\in\mathcal V$: this direction is instead parallel to $\mathbb G\widetilde{\mathbb G}^T v$.

\subsubsection{Density of multigrid sublattices and of rails}\label{sec:rails}
For each $J\subset\mathcal G$ of cardinality $\sharp J=d$, 
we form a nondegenerate translated lattice $\Lambda_J\subset\R^d$ by intersecting $d$-ples of planes from the families corresponding to $g\in J$ (cf. equation \eqref{1grid}):
\begin{equation}\label{deflambdaj}
    \Lambda_J:=\bigcup_{\vec k\in \Z^J} \bigcap_{g\in J}H_{g,k_g} = p_J + \left(\mathrm{Span}_{\mathbb Z}\left\{\frac g{|g|^2}:\ g\in J\right\}\right)^*,
\end{equation}
in which $\{p_J\}=\cap_{g\in J}H_{g,0}$ is determined by the translation numbers $\gamma_g,g\in J$ from the definition of our multigrid, and the notaton $\Lambda^*:=\{z\in \R^d :\langle z,p\rangle\in\mathbb Z,\ \forall p\in\Lambda\}$ denotes the dual lattice of $\Lambda$. In particular
\begin{equation}\label{eq:det}
|\det \Lambda_J|=\frac{|g_1|\cdots|g_d|}{|\det \mathbb{G}_J|}=\frac{|g_1|^2\cdots|g_d|^2}{|\det(g_1,\ldots,g_d)|}\ ,
\end{equation}
where like in \eqref{eq:CauchyBinet}, $\mathbb{G}_J$ is the matrix whose columns are $(\tfrac{g}{|g|})_{g\in J}$. 

\medskip

We will also use later the following geometric consideration: if $v$ is one of the multigrid directions \eqref{lines} that generate $\Lambda_J$, then $v^\perp$ intersects the multigrid lines parallel to $v$ in a sublattice $\Lambda_{J'_v}$ (with unit cell denoted by  $U_{\Lambda_{J'_v}}$) which is related to $\mathrm{Span}_{\Z}\left\{\frac{g}{|g|^2}:\ g\in J'_v\right\}$ by a formula like \eqref{deflambdaj}. Therefore, analogously to \eqref{eq:det}, for each $J\supset J'_v, \sharp J=d$, there holds:
\begin{equation}\label{uvperp}
\mathcal H^{d-1}(\pi_{v^\perp}U_{\Lambda_J})=\mathcal H^{d-1}(U_{\Lambda_{J'_v}})=\frac{\prod_{g\in J'_v}|g|^2}{|\det(J'_v)|}.
\end{equation}

\medskip

Given a fixed rail direction $v$, for every possible choice of $J$ such that $J\supset J_v'$, the lattice $\Lambda_J$ has a generator $\lambda_{v,J}v$ parallel to $v$, with $\lambda_{v,J}>0$. Furthermore we have
\begin{equation}\label{eq:lambdavJ}
\frac{\det \Lambda_J}{\lambda_{v,J}}=\H^{d-1}(\pi_{v^\perp} U_{\Lambda_J})=\mathcal H^{d-1}(U_{\Lambda_{J'_v}}),
\end{equation}
which shows that the leftmost quantity \textit{does not depend on $J$, but only on $v$}. We will use this fact in the proof of the $\liminf$ inequality.

\medskip

Furthermore, we find that if $P$ is a polygon in the hyperplane $\nu^\perp$, where
$\nu\in\R^d$ is a unit vector, then the number of dual rails that meet $P$ 
per unit area of $P$ (and in the limit of “large $P$”) equals 
\[
    \frac{\langle \nu,v\rangle}{\H^{d-1}(U_{\Lambda_{J'_v}})}.
\]
where $J'_v=\{g_1,\ldots,g_{d-1}\}\subset \mathcal G$ denotes the set
of vectors as in the definition \eqref{lines} of $v\in \mathcal V$. 
The above equality follows from the fact that $v\perp g$ for 
$g\in J'_v$ and $|v|=1$ by the definitions of 
$J', \mathcal V$, and is the multigrid analogue of \eqref{biperp}.

\medskip

The vertices of the multigrid are the union of all $\Lambda_J$ as above,
and since the traslations $\gamma_g$ are such that no more than $d$ planes from 
the multigrid meet simultaneously, then this union is disjoint.

\subsection{Energy functional in primal and dual spaces}\label{ssec:energyqc}
Let $\mathcal T$ be our quasiperiodic tiling of $\mathbb R^d$.
The parallelotopes from $\mathcal T$ have edges which are translations of the segments
$[0, \widetilde g]$ for $g\in\mathcal G$. The $(d-1)$-dimensional facets of elements of $\mathcal T$
have $\mathcal H^{d-1}$-measures equal to $|\det(\widetilde g:\ g\in J')|$\footnote{Here and throughout we somewhat improperly denote by $|\det(v_1,\ldots,v_k)|$ the Euclidean norm of the multivector $v_1\wedge\ldots\wedge v_k$, i.e. the $k$-dimensional measure of the parallelotope with sides $v_1,\ldots,v_k$.}, 
where $J'\subset \mathcal G$, $\sharp J'=d-1$, indexes their set of edges.
\medskip

The same information as determined by the mention of $J'$ 
can be equivalently encoded in terms of the associated rail directions $v_{J'}$, 
i.e. using the observations and notation of Section \ref{ssec_rails}. 
In this case we define a potential on the multigrid directions, $W:\mathcal V\to \R_+$ 
which associates to $v\in \mathcal V$ defined in \eqref{lines}, 
the weighted surface area given by 
\begin{equation}\label{defweight}
W(v):=w(\widetilde v)\ \det(\widetilde g:\ g\in J'_v),
\end{equation}
where $J'_v\subset \mathcal G$ is related to $v\in\mathcal V$ via \eqref{lines}, 
and $w(\widetilde v)$ is the weight which we associate to faces with normal direction $\widetilde v$, which is the “primal space” vector corresponding to $v$. \medskip

Note that here we use the opposite sign convention for $W$ compared to the case of lattices (in which $V$ was nonpositive), which seems more natural in this case.

\medskip

With notation \eqref{defweight} we then define the energy of a set $T\subset\R^d$,
which is a union of finitely many tiles, as follows:
\begin{eqnarray}\label{entile}
    \mathcal E_{W}(T)&:=&\sum_{v\in\mathcal V} W(v) \ 
    \sharp\{\mbox{facets of $\partial T$ with exterior normal $\widetilde v$}\}\nonumber\\
    &=&\int_{\partial T} w(\nu(x))d\mathcal H^{d-1}(x).
\end{eqnarray}

\subsubsection{Energy in the dual space}
To a finite union $T$ of tiles from $\mathcal T$ there corresponds a finite set of vertices
$X\subset\mathcal X$ and to faces of tiles of $\mathcal T$ there correspond edges between
vertices in $\mathcal X$. We can rewrite the energy \eqref{entile} in terms of the dual 
space elements as 
\begin{equation}\label{entile2}
    \mathcal E_{W}(X):=\sum_{v\in\mathcal V}W(v)
    \sharp\left\{\{x,y\}\subset \mathcal X\left|\begin{array}{l} x-y=\lambda v, \lambda\neq 0,\\
    (x,y)\cap\mathcal X=\emptyset,\\
    \sharp(\{x,y\}\cap X)=1\end{array}\right.\right\}=\mathcal E_{W}(T),
\end{equation}
where $(x,y)$ is the open segment in $\mathbb R^d$ with endpoints $x,y$. To prove the last equality in \eqref{entile2}, note that for each $\widetilde v$, in \eqref{entile} we are counting pairs $\{x,y\}$ corresponding to adjacent tiles sharing a face with normal vector $\widetilde v$, only one of which is in $T$, which is a reformulation of \eqref{entile}. The rewriting \eqref{entile2} corresponds to considering a particle 
interaction between vertices of the multigrid, where only pairs of first neighbours 
(with respect to the natural graph structure of the multigrid given by its $1$-skeleton) 
are taken into account.

\subsubsection{Rescaled energy functionals and $\Gamma$-convergence statement}
We now recall the definition of the functionals that allow us to formulate our $\Gamma$-convergence result:
\begin{itemize}
    \item for each $N\in\mathbb N$ we define 
    \[
        \mathcal F_N(E):=
        \begin{cases}
            \displaystyle N^{-\frac{d-1}{d}}\mathcal{E}_{W}(T) & 
            \text{if $T:=N^{\frac1d}E$ is a disjoint union of $N$ tiles from $\mathcal T$},\\
            +\infty & \text{otherwise}.
        \end{cases}
    \]
    \item As a limit of $\mathcal F_N$ we find the desired perimeter functional:
    \begin{subequations}\label{pvnew}
        \begin{equation}
            P_{W}(E)=\begin{cases}\int_{\partial^*E} \phi_{W}(\nu_E(x)) d\mathcal H^{d-1}(x)
            &\text{if $E$ is a finite-perimeter set},\\ +\infty &\text{else.}\end{cases}
        \end{equation}
    where, with $A=\mathbb G \widetilde{\mathbb G}^T$ as in Lemma \ref{bda}, $\Lambda_J$ as in \eqref{deflambdaj}, and $U_{\Lambda_{J'_v}}$ as in formula \eqref{uvperp}, we have
        \begin{equation}\label{phivnew}
            \phi_{W}(\nu):=
            \frac{1}{\det A}\sum_{v\in\mathcal V}\frac{W(v)}{\mathcal H^{d-1}(U_{\Lambda_{J'_v}})}\langle \nu,Av\rangle_+.
        \end{equation}
    \end{subequations}
\end{itemize}

\medskip

Our main result then is the following.
\begin{theorem}\label{thmqc}
    The functionals $\mathcal F_N$ $\Gamma$-converge, 
    with respect to the $L^1_{loc}$ topology, to the functional $P_{W}$. 
    In particular, the minimizers $\overline{X}_N$ of $\mathcal E_{W}$ from \eqref{entile2},
    amongst $N$-tile configurations converge, 
    in a weak sense, up to rescaling, to a finite-perimeter set $E$ that minimizes $P_{W}$.
\end{theorem}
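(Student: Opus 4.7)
The plan is to adapt the strategy of Theorem \ref{thm:main} to the quasicrystal setting, using the rail decomposition made available by \eqref{entile2} as a substitute for the sublattice decomposition of Section \ref{subsec:splitting}: $\mathcal{E}_W$ is already presented as a sum over rail directions $v\in\mathcal V$ of counts of broken first-neighbour bonds along 1D rails that, after further grouping according to the lattices $\Lambda_J$ with $J\supset J'_v$, are translates of one-dimensional sublattices in the dual space. Throughout, the affine map $A$ and the bounded-distortion map $\phi$ of Lemma \ref{bda} allow passage between primal and dual space with $O(1)$ error per tile, which vanishes after rescaling by $N^{-1/d}$ and hence in the $L^1_{loc}$ topology used in the statement.

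For the liminf, given $E_N:=N^{-1/d}T_N\to E$ in $L^1_{loc}$, fix $v\in\mathcal V$ and a dual rail $r$ parallel to $v$, and to each $x\in X_N\cap r$ associate the cylindrical cell $x+U_{\Lambda_{J'_v}}+[0,\lambda_{v,J}v)$, yielding an analogue $E^{v,r}_N$ of the set \eqref{evtau}. The rescaled dual-space cylindrical set $N^{-1/d}E^{v,r}_N$ converges in $L^1_{loc}$ to $A^{-1}E$ because the BD discrepancy $\phi-A$ is bounded. Reshetnyak's theorem applied to the direction-$v$ anisotropic perimeter in the dual space, with weight $W(v)/\mathcal H^{d-1}(U_{\Lambda_{J'_v}})$ dictated by the rail-density computation of Section \ref{sec:rails} together with \eqref{eq:lambdavJ}, supplies lower semicontinuity of each component. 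Summing over $v\in\mathcal V$ and over dual rails parallel to each $v$, and then transferring to the primal space via $A$ -- which contributes the $1/\det A$ prefactor in \eqref{phivnew} exactly as in Proposition \ref{prop:equivariance} -- yields $\liminf_N \mathcal F_N(E_N)\ge P_W(E)$.

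For the limsup, I would follow Section \ref{sec:glimsup1}: approximate $E$ by a unit-volume polyhedral set, take $X_N$ to consist of the multigrid vertices $x$ with $\phi(x)/N^{1/d}\in E$, and correct by at most $O(N^{(d-1)/d})$ vertices to reach exactly $N$ tiles, with energy error of order $N^{(d-1)^2/d^2}$ as in \eqref{xnenrgy}. For each face $P$ of $N^{1/d}\partial E$ and each $v\in\mathcal V$, the multigrid analogue of Lemma \ref{lem-plane} combined with the rail-density formula of Section \ref{sec:rails} identifies the number of $v$-rail bonds cut by the interior of $P$ with $\langle \nu_P, Av\rangle_+\,\mathcal H^{d-1}(P)/\mathcal H^{d-1}(U_{\Lambda_{J'_v}})$, up to a boundary error $O(\mathcal H^{d-2}(\partial P))$. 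Weighting by $W(v)$, summing over $v$ and $P$, and transferring to the primal space via $A$ produces the matching upper bound $\limsup_N \mathcal F_N(E_N)\le P_W(E)$. Compactness under the hypothesis $W(v)>0$ for every rail direction is the quasicrystal analogue of Lemma \ref{lemma:combinatorial} and Proposition \ref{prop:compactness}: positivity gives $\mathcal E_W(T)\ge c\,\mathcal H^{d-1}(\partial T)$, so equiboundedness of $\mathcal F_N$ implies equibounded perimeters for $E_N$ and standard compactness for sets of finite perimeter applies; the convergence of minimizers is then a formal consequence of $\Gamma$-convergence together with compactness.

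The main obstacle is the liminf step: while in the dual space each rail is a genuine one-dimensional translate of a lattice, $L^1_{loc}$-convergence is formulated in the primal space, so one has to verify carefully that the errors coming from the non-affine part $\phi-A$ of Lemma \ref{bda}, as well as those from replacing actual primal tiles by the dual-space cylindrical cells underlying the $E^{v,r}_N$, are of order $o(N^{(d-1)/d})$ in the energy and $o(1)$ in $L^1_{loc}$, so that they do not contribute to the $\Gamma$-limit.
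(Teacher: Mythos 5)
Your overall architecture matches the paper's: decompose the energy by rail directions $v\in\mathcal V$, reduce each piece to the lattice machinery of Section \ref{latliminf} applied to the sublattices $\Lambda_J$ with $J\supset J'_v$, pass between primal and dual space via the bounded-distortion Lemma \ref{bda}, prove the limsup by polyhedral approximation, and obtain compactness from the perimeter bound. There is, however, a genuine gap in your liminf argument, and it sits exactly at the point you describe as already settled. A dual rail is \emph{not} a translate of a one-dimensional lattice: the multigrid points on a line $\ell=\R v+c$ are the intersections of $\ell$ with the hyperplanes $H_{g,k}$ for all $g\in\mathcal G\setminus J'_v$, i.e.\ a union of $\sharp\mathcal G-d+1$ arithmetic progressions with in general incommensurable spacings. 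The energy \eqref{entile2} counts broken bonds between \emph{consecutive} points of this quasiperiodic sequence, whereas the cylindrical-set/Reshetnyak machinery controls broken bonds of the individual lattices $\Lambda_J$, i.e.\ pairs $\{x,x+\lambda_{v,J}v\}$ with possibly many intermediate multigrid points between them. To pass from one count to the other the paper proves the combinatorial Lemma \ref{bondcount}: every broken $\Lambda_J$-bond contains at least one broken nearest-neighbour bond in the open segment it spans, and each nearest-neighbour bond is charged at most once per admissible $J$, giving $\sum_{J\supset J'_v}\sharp\,\mathrm{EP}_{J,v}\le(\sharp\mathcal G-d+1)\,\sharp\,\mathrm{EP}_v$; the factor $\sharp\mathcal G-d+1$ then cancels because, by \eqref{eq:lambdavJ}, the lower bound \eqref{liminfj} is the same for each of the $\sharp\mathcal G-d+1$ admissible $J$. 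Without this step (or an equivalent), your lower bounds on the per-lattice quantities do not bound the actual energy from below.

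A second, smaller issue: you assert that the rescaled cylindrical sets converge to $A^{-1}E$ ``because the BD discrepancy $\phi-A$ is bounded''. Bounded distortion alone only places the $\Lambda_J$-cells near the corresponding tiles; to conclude that the union of cells over $X_N\cap\Lambda_J$ converges to \emph{all} of $A^{-1}E$, rather than to a set of smaller density, you need the equidistribution of the subtilings $\mathcal T_J$ inside a converging sequence. This is the content of Lemmas \ref{lemma:density} and \ref{lemma:equisubs} and Proposition \ref{prop:L1corresp}, whose key point is that each term of $\sum_J|N^{-1/d}T^J\cap E_N\cap F|$ is forced to converge to its upper bound $\rho_J|E\cap F|$. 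The limsup and compactness parts of your proposal are essentially those of the paper.
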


\subsection{Density of subtilings}
We now perform a “statistical analysis” of the tiling, counting how many tiles of every kind appear on average in a fixed portion of the space, proving a formula for the asymptotic density of each subtiling. We then apply these estimates to prove the main result of this section, namely Proposition \ref{prop:L1corresp}, which gives a correspondence between convergence of a sequence in the primal space and convergence of suitable sublattices in the dual space. Given the whole set of tiles $\T$ and $J\subset\G$, $\sharp J=d$, we denote by $\T_J$ the family (subtiling) composed by all tiles corresponding to points of the sublattice $\Lambda_J$, namely
\[
\T_J=\{P(x):x\in\Lambda_J\}.
\]
 We denote by $T^J=\bigcup_{T\in\T_J}T$ the corresponding union.
Given a subset $X_N\subset \mathcal X$ and the associated union of tiles $T_N=\bigcup_{x\in X_N}P(x)$, we define $X_N^J:=X_N\cap\Lambda_J$ and $T_N^J:=T_N \cap T^J$.

\begin{lemma}[Criterion for measure convergence]\label{lemma:L1} Let $E,E_N\subset\R^n$ be sets of finite measure. Then $E_N\to E$ in measure if and only if both of the following hold:
\begin{itemize}
  \item[(i)] $ |E_N|\to|E|$;
  \item[(ii)] $|E_N\cap B|\to |E\cap B|$  for every ball $B$.
\end{itemize}
\end{lemma}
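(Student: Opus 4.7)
The forward implication is immediate from the elementary inequality
\[
\bigl||E_N\cap A|-|E\cap A|\bigr| \le |(E_N\triangle E)\cap A| \le |E_N\triangle E|,
\]
valid for any measurable $A\subset\R^n$: taking $A=\R^n$ gives (i) and taking $A$ to be a ball gives (ii). The content of the lemma is the converse implication.

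The plan for the converse is to pass through the Hilbert space $L^2(\R^n)$. Since $|\mathbf{1}_{E_N}-\mathbf{1}_E|$ is pointwise $\{0,1\}$-valued, one has $\|\mathbf{1}_{E_N}-\mathbf{1}_E\|_{L^1}=\|\mathbf{1}_{E_N}-\mathbf{1}_E\|_{L^2}^2$, so the conclusion $|E_N\triangle E|\to 0$ is equivalent to strong $L^2$ convergence $\mathbf{1}_{E_N}\to\mathbf{1}_E$. I will obtain the latter as the combination of weak convergence plus norm convergence.

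For the weak convergence, observe that by (i) the quantity $\|\mathbf{1}_{E_N}\|_{L^2}^2=|E_N|$ is bounded, so by reflexivity of $L^2$ the sequence is weakly sequentially relatively compact. Along any weakly convergent subsequence with limit $f\in L^2$, condition (ii) gives
\[
\int_B f\,dx=\lim_{N\to\infty}\int_{\R^n}\mathbf{1}_{E_N}\mathbf{1}_B\,dx=\int_B \mathbf{1}_E\,dx
\]
for every ball $B$. By the Lebesgue differentiation theorem, a locally integrable function whose integral over every ball agrees with that of another must equal it almost everywhere; hence $f=\mathbf{1}_E$. Since every subsequence admits a further subsequence converging weakly to the same limit, the whole sequence satisfies $\mathbf{1}_{E_N}\rightharpoonup\mathbf{1}_E$ in $L^2(\R^n)$.

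The upgrade from weak to strong convergence uses (i) as \emph{norm} convergence: $\|\mathbf{1}_{E_N}\|_{L^2}^2=|E_N|\to|E|=\|\mathbf{1}_E\|_{L^2}^2$. Expanding
\[
\|\mathbf{1}_{E_N}-\mathbf{1}_E\|_{L^2}^2=\|\mathbf{1}_{E_N}\|_{L^2}^2-2\langle\mathbf{1}_{E_N},\mathbf{1}_E\rangle_{L^2}+\|\mathbf{1}_E\|_{L^2}^2,
\]
and using $\mathbf{1}_E\in L^2(\R^n)$ as a valid test function in the weak convergence, the right-hand side tends to $\|\mathbf{1}_E\|_{L^2}^2-2\|\mathbf{1}_E\|_{L^2}^2+\|\mathbf{1}_E\|_{L^2}^2=0$. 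The only delicate step in the whole argument is the identification of the weak subsequential limit from ball-averages via Lebesgue differentiation; the rest is standard Hilbert-space functional analysis.
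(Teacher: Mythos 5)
Your proof is correct, and it takes a genuinely different (though ultimately parallel) route from the paper's. The paper stays in $L^1$: it upgrades the ball-convergence of hypothesis (ii) to convergence of $|E_N\cap F|$ for \emph{every} measurable $F$ via Vitali's covering theorem together with (i), deduces weak-$L^1$ convergence $\1_{E_N}\rightharpoonup\1_E$ by decomposing nonnegative $L^\infty$ test functions as countable sums of indicators, and then concludes by observing that $\1_{E_N\Delta E}=\1_{E_N}+\1_E-2\1_{E_N}\1_E\rightharpoonup 0$ weakly, which for nonnegative functions forces strong convergence. You instead work in $L^2$, extract a weak limit by reflexivity, identify it as $\1_E$ via the Lebesgue differentiation theorem (a clean substitute for the Vitali/layer-cake step), and upgrade to strong convergence by the Radon--Riesz mechanism, using (i) as norm convergence. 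It is worth noting that the two arguments are the same computation in different packaging: expanding your $L^2$ norm gives $\|\1_{E_N}-\1_E\|_{L^2}^2=|E_N|-2|E_N\cap E|+|E|=|E_N\Delta E|$, which is exactly the identity the paper manipulates; in both cases the crux is that (i) supplies $|E_N|\to|E|$ while the weak convergence, tested against $\1_E$, supplies $|E_N\cap E|\to|E|$. Your version is somewhat more self-contained as Hilbert-space analysis; the paper's $L^1$ formulation connects more directly to the Visintin extremality criterion mentioned in its subsequent remark, which offers an alternative endgame.
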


\begin{proof}
We only prove that $(i)$ and $(ii)$ imply the convergence, because the other implication is trivial.
Assumption $(ii)$ implies the convergence of the $L^1$-$L^\infty$ pairing $\langle \1_{E_N},\1_F\rangle\to\langle \1_E,\1_F\rangle$ when $F$ is a ball. Thanks to Vitali's covering theorem, and using also assumption $(i)$, we extend this convergence to any measurable $F$. Moreover, given any $L^\infty$ nonnegative function $g$, we can write $g=\sum_{i=1}^\infty \lambda_i\1_{F_i}$ for some measurable sets $F_i$ and $\sum_{i=1}^\infty \lambda_i<\infty$. As a consequence we obtain $\langle\1_{E_N},g\rangle\to \langle \1_E,g\rangle$ for every $g\in L^\infty$, that is $\1_{E_N}\rightharpoonup\1_E$ weakly in $L^1$.

Using that $\1_{E_N\Delta E}=\1_{E_N}+\1_{E}-2\1_{E_N}\1_E$, we obtain that $\1_{E_N\Delta E}\rightharpoonup 0$ weakly in $L^1$ as well. Since the functions $\1_{E_N\Delta E}$ are positive, this implies strong convergence to $0$, which is equivalent to $|E_N\Delta E|\to 0$.
\end{proof}

The usefulness of the previous criterion stems from the fact that in order to prove convergence in measure, it is sufficient to prove an estimate for the volume on balls, without regard to where the set is located inside the balls. Using the decomposition of the multigrid in sublattices we will be able to precisely estimate the number of points of a given configuration inside balls, and using the bounded distortion property between primal and dual space we will transfer this information back and forth.

\begin{remark}
    We also mention the following result due to Visintin \cite{Vis}, that we could have used in the conclusion of the previous lemma: if a sequence $u_N\in L^1(\R^n,\R^m)$ is converging weakly to $u$, and if $u(x)$ is an extremal point of the closed convex hull $\overline{\mathrm{co}}(\{u_N(x)\}_N)$ for a.e. $x$, then $u_N\to u$ strongly in $L^1$. In our case the assumption on the convex hull is verified since all functions are characteristic functions.
\end{remark}

Recall the definition of the matrices $\mathbb{G}, \mathbb{\widetilde G}$ and $\mathbb{G}_J,\mathbb{\widetilde G}_J$ given below \eqref{eq:CauchyBinet}, and of the affine map $A$ with linear part $\mathbb G \mathbb{\widetilde G}^T$ given in \eqref{eq:A}.

\begin{lemma}[Density of subtilings]\label{lemma:density}
    For every ball $B_R(x)$, 
    \[
    |T^J\cap B_R(x)|=|B_R(x)|\rho_J+O(R^{d-1})\quad\text{as $R\to\infty$}
    \]
    where
    \begin{equation}\label{rhoj}
    \rho_J=\frac{|\det(\mathbb{G}_J)|\,|\det( \mathbb{\widetilde G}_J)|}{|\det (\mathbb G\mathbb{\widetilde G}^T)|}\stackrel{\eqref{detcond}}{=}\frac{\det(\mathbb{G}_J)\,\det( \mathbb{\widetilde G}_J)}{\det (\mathbb G\mathbb{\widetilde G}^T)}.
    \end{equation}
    In particular the subtiling $\T_J$ has asymptotic density $\rho_J$. Moreover, for every measurable $F$ with finite measure, we have
    \[
    \lim_{\lambda\to \infty} \frac{|T^J\cap \lambda F|}{|\lambda F|}=\rho_J.
    \]
\end{lemma}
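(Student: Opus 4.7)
The plan is to reduce the volume computation to a lattice-point count in $\Lambda_J$, use the bounded-distortion map $\phi$ from Lemma \ref{bda} to transfer that count to the primal space, and then identify the density $\rho_J$ by a Cauchy--Binet computation.

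First I would write
\[
|T^J\cap B_R(x)|=\sum_{y\in\Lambda_J:\,P(y)\subset B_R(x)}|P(y)|+\mathrm{(boundary~term)}.
\]
By \eqref{parallelotopes}, for every $y\in\Lambda_J$ the tile $P(y)$ has the same shape $\sum_{g\in J}[0,1]\widetilde g$ up to a lattice translation, so $|P(y)|=|\det(\widetilde g:g\in J)|=|\det\widetilde{\mathbb G}_J|\,|g_1|\cdots|g_d|$ is independent of $y$. The boundary term is controlled by the number of tiles of $\mathcal T_J$ that meet $\partial B_R(x)$, which (since tiles have uniformly bounded diameter) is bounded by a constant times the $(d-1)$-volume of a thickening of $\partial B_R(x)$, giving an $O(R^{d-1})$ contribution.

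Next I would count $\sharp\{y\in\Lambda_J:\phi(y)\in B_R(x)\}$. By Lemma \ref{bda} the map $\phi$ differs from the invertible affine map $A$ by a uniformly bounded error, so this count equals $\sharp(A\Lambda_J\cap B_R(x))$ up to an $O(R^{d-1})$ correction coming from points in a bounded neighbourhood of $\partial B_R(x)$. Since $A\Lambda_J$ is an affine lattice of covolume $|\det A|\,|\det\Lambda_J|$, the standard Gauss-type estimate for lattice points in balls yields
\[
\sharp(A\Lambda_J\cap B_R(x))=\frac{|B_R(x)|}{|\det A|\,|\det\Lambda_J|}+O(R^{d-1}).
\]
Multiplying by $|P(y)|$ and using \eqref{eq:det}, i.e.\ $|\det\Lambda_J|=|g_1|\cdots|g_d|/|\det\mathbb G_J|$, gives
\[
\frac{|P(y)|}{|\det A|\,|\det\Lambda_J|}=\frac{|\det\widetilde{\mathbb G}_J|\,|\det\mathbb G_J|}{|\det A|}.
\]
Finally I would identify $|\det A|=|\det(\mathbb G\widetilde{\mathbb G}^T)|$, which by the Cauchy--Binet formula \eqref{eq:CauchyBinet} equals $\sum_{J'}\det(\mathbb G_{J'})\det(\widetilde{\mathbb G}_{J'})$; under the sign condition \eqref{detcond} every summand has the same sign, so the absolute values can be dropped, yielding $\rho_J$ as in \eqref{rhoj}. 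Combining the two estimates proves the first claim.

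For the ``moreover'' statement, I would approximate $F$ from inside and outside by finite disjoint unions of small dyadic cubes $\{Q_i\}$. Applying the ball estimate (or its obvious cube analogue, proved by the same argument) to each $\lambda Q_i$ and summing gives
\[
\frac{|T^J\cap\lambda F|}{|\lambda F|}=\rho_J+\frac{1}{|\lambda F|}\sum_i O((\lambda\,\mathrm{diam}(Q_i))^{d-1}),
\]
so letting $\lambda\to\infty$ first (killing the $O(\lambda^{d-1})/\lambda^d$ error) and then refining the cube decomposition to match $|F|$ arbitrarily well yields the limit $\rho_J$. The main technical point is simply keeping track of the boundary contributions in both the lattice-point count and the tile count; the arithmetic identification of $\rho_J$ via Cauchy--Binet is immediate once the counting is in place.
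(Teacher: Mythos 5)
Your proposal is correct and follows essentially the same route as the paper: split off the $O(R^{d-1})$ boundary tiles, use the constant tile volume $|\det(\widetilde g:g\in J)|$, transfer the count to a lattice-point count via the bounded-distortion Lemma \ref{bda} and the affine map $A$, and identify $\rho_J$ through \eqref{eq:det} and Cauchy--Binet. The only cosmetic difference is in the ``moreover'' part, where you use a dyadic-cube approximation while the paper invokes rescaling plus Vitali's covering theorem; these are interchangeable.
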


\begin{proof}
Let $D$ be the diameter of the tiles in $\T_J$. We have that
    \[
    |T^J\cap B_R|=\sum_{\substack{T\in \T_J\\ T\cap B_{R-D}\neq \emptyset}} |T\cap B_R|+\sum_{\substack{T\in \T_J\\ T\cap B_{R-D}= \emptyset}} |T\cap B_R|.
    \]
The second term is bounded by $|B_R|-|B_{R-D}|=O(R^{d-1})$. In the first term, every tile is entirely contained in $B_R$ and therefore contributes with $|T|=|\det(\widetilde g: g\in J)|=(\Pi_{g\in J}|g|)\det \mathbb{\widetilde G}_J$ to the sum. We thus obtain
\[
|T^J\cap B_R|=(\Pi_{g\in J}|g|)\, \det \mathbb{\widetilde G}_J \,\sharp\{x\in\Lambda_J: P(x)\cap B_{R-D}\neq \emptyset\}+O(R^{d-1})
\]
To estimate the cardinality we make use of the bounded distortion property given by Lemma \ref{bda}. It follows that
\begin{align*}
\sharp\{x\in\Lambda_J: P(x)\cap B_{R-D}\neq \emptyset\}& =\sharp\{x\in\Lambda_J: x\in a^{-1}(B_{R-d})\}+O(R^{d-1})\\
&= |A^{-1}(B_R)| \frac{1}{|\det \Lambda_J|}+O(R^{d-1})\\
&=\frac{|B_R|}{|\det (\mathbb G\mathbb{\widetilde G}^T)|}    \frac{|\det \mathbb{G}_J|}{\Pi_{g\in J}|g|}+O(R^{d-1}).
\end{align*}
Here we have used that, thanks to the bounded distortion (Lemma \ref{bda}), the tile $P(x)$ associated to $x$ is at most at a bounded finite distance from $Ax$, and therefore the error obtained counting the points $x$ instead of the tiles $P(x)$ is of order $O(R^{d-1})$ (that corresponds to the number of points in a finite neighbourhood of $\partial B_R$).
Putting together the last two equations we obtain \eqref{rhoj}.

\medskip

The last statement about measurable sets follows from a rescaling and an application of Vitali's covering theorem.
\end{proof}

In the previous lemma we proved the existence of a density for every subtiling. In the following lemma we prove a more general statement: whenever a sequence $E_N$ is converging in measure to a set $E$, then every “restricted subtiling” $E_N\cap (N^{-1/d}T^J)$ is uniformly spread inside $E_N$, with the same density as the global one $\rho_J$.

\begin{lemma}\label{lemma:equisubs}
    Let $E$ be a measurable set of finite measure. If $E_N\to E$ in measure then 
    for every measurable set $F$ we have
    \[
    |N^{-1/d}T^J\cap E_N\cap F|\to \rho_J |E\cap F|,
    \]
    for $\rho_J$ as defined in \eqref{rhoj}.
\end{lemma}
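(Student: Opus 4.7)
The plan is to upgrade Lemma \ref{lemma:density} to a weak-$*$ convergence statement for the characteristic functions $\chi_N := \mathbf{1}_{N^{-1/d} T^J}$, and then combine this with the strong $L^1$-convergence $\mathbf{1}_{E_N} \to \mathbf{1}_E$ provided by the hypothesis and Lemma \ref{lemma:L1}.

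First I would reinterpret Lemma \ref{lemma:density}. Rescaling the second assertion of that lemma (with $\lambda = N^{1/d}$) gives $|N^{-1/d} T^J \cap F| \to \rho_J |F|$ for every measurable $F$ of finite measure. Since $0 \le \chi_N \le 1$, linearity extends this to simple functions with finite-measure support, and a standard density argument in $L^1(\R^d)$ then yields
\[
\int_{\R^d} \chi_N \, \psi \longrightarrow \rho_J \int_{\R^d} \psi \qquad \text{for every } \psi \in L^1(\R^d),
\]
that is, $\chi_N \rightharpoonup^* \rho_J$ in the duality between $L^\infty$ and $L^1$.

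Second, since $E$ has finite measure and $E_N \to E$ in measure (in the sense of Lemma \ref{lemma:L1}), we have $\mathbf{1}_{E_N} \to \mathbf{1}_E$ strongly in $L^1(\R^d)$, and hence $\mathbf{1}_{E_N \cap F} \to \mathbf{1}_{E \cap F}$ in $L^1$ for every measurable $F$ (note $|E \cap F| \le |E| < \infty$, so the limit is integrable even when $F$ is not). I then decompose
\[
\bigl|N^{-1/d} T^J \cap E_N \cap F\bigr| - \rho_J |E \cap F| = \int \chi_N \bigl(\mathbf{1}_{E_N \cap F} - \mathbf{1}_{E \cap F}\bigr) + \int (\chi_N - \rho_J)\, \mathbf{1}_{E \cap F}.
\]
The first summand is bounded by $\|\mathbf{1}_{E_N \cap F} - \mathbf{1}_{E \cap F}\|_{L^1}$ and vanishes in the limit by the previous step. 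The second summand vanishes by the weak-$*$ convergence $\chi_N \rightharpoonup^* \rho_J$ applied to the test function $\mathbf{1}_{E \cap F} \in L^1(\R^d)$.

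The only subtlety I anticipate is the passage from the statement of Lemma \ref{lemma:density}, which is stated as a pointwise-in-$F$ convergence of measures of intersections, to a bona fide weak-$*$ convergence of $\chi_N$ against a general $L^1$ test function. This is routine thanks to the uniform $L^\infty$ bound $\|\chi_N\|_\infty \le 1$, which lets approximation errors in $L^1$ translate into errors in the pairing. Once this is in place, the remaining argument is essentially Vitali-Hahn-Saks-flavoured bookkeeping and contains no further difficulty.
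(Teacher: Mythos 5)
Your proof is correct, but it follows a genuinely different route from the paper's. The paper exploits the fact that the subtilings $\{\T_J\}_J$ partition $\R^d$, so that $\sum_J\rho_J=1$: it writes $|E_N\cap (E\cap F)|=\sum_J|N^{-1/d}T^J\cap E_N\cap (E\cap F)|$, bounds each summand from above by $|N^{-1/d}T^J\cap (E\cap F)|$, notes that both the left-hand side and the sum of the upper bounds converge to $|E\cap F|$ (the latter by Lemma \ref{lemma:density} and $\sum_J\rho_J=1$), and concludes by a squeeze over the finite sum that each summand must converge to its own upper bound $\rho_J|E\cap F|$; a final step replaces $E\cap F$ by $F$ using $|E_N\setminus E|\to 0$. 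You instead treat a single $J$ in isolation: you upgrade the density statement of Lemma \ref{lemma:density} to the weak-$*$ convergence $\1_{N^{-1/d}T^J}\weakstarto\rho_J$ in $L^\infty=(L^1)^*$ (legitimate, via the uniform bound by $1$ and density of simple functions in $L^1$), and pair it against the strongly $L^1$-convergent functions $\1_{E_N\cap F}\to\1_{E\cap F}$. Both arguments rest on the same two inputs, namely Lemma \ref{lemma:density} and $|E_N\Delta E|\to 0$; yours is slightly longer but more robust, since it never uses that the densities sum to $1$ and would apply verbatim to any single family of tiles possessing an asymptotic density, whereas the paper's is shorter but tied to the partition structure of the multigrid tiling.
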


\begin{proof}
From the convergence in measure we deduce that $|E_N\cap(E\cap F)|\to |E\cap F|$
for any $F$. Moreover
\begin{align*}
|E_N\cap (E\cap F)|&=\sum_J |N^{-1/d}T^J\cap E_N\cap (E\cap F)| \\
&\leq \sum_J |N^{-1/d} T^J\cap (E\cap F)|
\end{align*}
and the last term converges to $\sum_J \rho_J |E\cap F|=|E\cap F|$ by Lemma \ref{lemma:density}. Since the previous inequality holds term by term, we obtain that every single term has to converge to its upper bound, that is
\[
|N^{-1/d}T^J\cap E_N\cap (E\cap F)|\to \rho_J |E\cap F|.
\]
Since the limit of $|N^{-1/d}T^J\cap E_N\cap (E\cap F)|$ and of $|N^{-1/d}T^J\cap E_N\cap F|$ is the same, the conclusion follows.
\end{proof}

\begin{proposition}[$L^1$-correspondence between primal and dual]\label{prop:L1corresp}
Let $T_N=\bigcup_{x\in X_N} P(x)$ be a sequence of tiled sets, and suppose that $N^{-1/d}T_N\to E$ in measure for some set $E$ of finite measure. Then, setting
\begin{equation}\label{eq:ENJ}
 E_N^J:=\bigcup_{x\in X_N\cap \Lambda_J} (x+U_{\Lambda_J}),
\end{equation}
we have that $ N^{-1/d}E_N^J\to A^{-1}E$ in measure for every $J\subset\G$, $\sharp J=d$.
\end{proposition}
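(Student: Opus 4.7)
The plan is to verify both conditions of the measure-convergence criterion Lemma \ref{lemma:L1}: namely $|N^{-1/d}E_N^J|\to|A^{-1}E|$ and $|N^{-1/d}E_N^J\cap B|\to|A^{-1}E\cap B|$ for every ball $B$. The bridge from the primal side (where Lemma \ref{lemma:equisubs} directly applies to the tiles $P(x)$) to the dual side (where the $\Lambda_J$-cells $x+U_{\Lambda_J}$ making up $E_N^J$ live) is provided by the bounded-distortion map $\phi$ of Lemma \ref{bda}: it sends each $x\in X_N\cap\Lambda_J$ to the center $\phi(x)$ of $P(x)$, with $|\phi(x)-Ax|$ uniformly bounded.

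For the total mass, all tiles with center in $\Lambda_J$ are congruent parallelotopes of volume $|\det(\widetilde g:g\in J)|$, so that
\[
|T_N\cap T^J|=\sharp(X_N\cap\Lambda_J)\cdot|\det(\widetilde g:g\in J)|,\qquad |E_N^J|=\sharp(X_N\cap\Lambda_J)\cdot|\det\Lambda_J|.
\]
Lemma \ref{lemma:equisubs} applied to a sufficiently large ball $F$ (exploiting $|E|<\infty$) yields $N^{-1}|T_N\cap T^J|\to\rho_J|E|$, and combining with the identities above gives $N^{-1}|E_N^J|\to\rho_J|E|\cdot|\det\Lambda_J|/|\det(\widetilde g:g\in J)|$. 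A direct computation from \eqref{eq:det}, \eqref{rhoj}, and $\det A=\det(\mathbb G\widetilde{\mathbb G}^T)$ simplifies this ratio to $1/|\det A|$, so the limit equals $|E|/|\det A|=|A^{-1}E|$, as required.

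For the local statement, fix a ball $B\subset\R^d$. Up to an $O(N^{-1/d})$ error from $\Lambda_J$-cells straddling $\partial N^{1/d}B$, we have $|N^{-1/d}E_N^J\cap B|=N^{-1}\sharp\{x\in X_N\cap\Lambda_J:x\in N^{1/d}B\}\cdot|\det\Lambda_J|$. Using $|\phi(x)-Ax|\le C$ and the bounded diameter of each $P(x)$, the condition $x\in N^{1/d}B$ can be replaced by $P(x)\subset N^{1/d}AB$ at the cost of a boundary error of $O(N^{(d-1)/d})$ from tiles meeting $\partial N^{1/d}AB$; this becomes negligible after the $N^{-1}$ normalisation. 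Lemma \ref{lemma:equisubs} with $F=AB$ then gives $N^{-1}|T_N\cap T^J\cap N^{1/d}AB|\to\rho_J|E\cap AB|$, and the same density identity as above yields $|N^{-1/d}E_N^J\cap B|\to|E\cap AB|/|\det A|=|A^{-1}E\cap B|$, concluding the verification.

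The main obstacle is the careful bookkeeping of the two boundary error terms (the $\Lambda_J$-cells crossing $\partial N^{1/d}B$ in the dual space and the tiles meeting $\partial N^{1/d}AB$ in the primal space). Both are handled by the same elementary surface-to-volume estimate, since both the tiles $P(x)$ and the cells $x+U_{\Lambda_J}$ have uniformly bounded diameters depending only on the finite data $\mathcal G$ and $\widetilde{\mathcal G}$.
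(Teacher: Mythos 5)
Your proof is correct and follows essentially the same route as the paper: reduce to Lemma \ref{lemma:L1}, transfer the count of $\Lambda_J$-points in $N^{1/d}B$ to the count of tiles in $N^{1/d}AB$ via the bounded-distortion Lemma \ref{bda} with $O(N^{(d-1)/d})$ boundary errors, apply Lemma \ref{lemma:equisubs} with $F=AB$, and simplify the determinant ratio to $1/|\det A|$ using \eqref{eq:det} and \eqref{rhoj}. The only (welcome) difference is that you explicitly verify the total-mass condition (i) of Lemma \ref{lemma:L1}, which the paper leaves implicit.
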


\begin{proof}
    We make use of the characterization of convergence in measure given by Lemma \ref{lemma:L1}. The aim is thus to prove that, for every ball $B$, 
    \begin{equation}\label{eq:subconv}
    | N^{-1/d} E_N^J\cap B|\to |A^{-1}E\cap B|\qquad\text{as $N\to\infty$}.
    \end{equation}
    We first rewrite the left-hand side in terms of $T_N^J$, using the bounded distortion property. We have
    \begin{align*}
    |N^{-1/d} E_N^J\cap B|&=N^{-1}|E_N^J\cap N^{1/d}B|\\
    &=N^{-1}\left(|\det \Lambda_J|\, \sharp\{x\in X_N\cap \Lambda_J: P(x)\subset N^{1/d}AB\}+O(N^{\frac{d-1}{d}})\right)\\
    &= N^{-1} |\det \Lambda_J| \,\sharp\{x\in X_N\cap \Lambda_J:N^{-1/d}P(x)\subset AB\}+O(N^{-1/d})\\
    &=\frac{|\det \Lambda_J|}{|\det(\widetilde g:\ g\in J)|} |N^{-1/d} T_N^J\cap AB|+O(N^{-1/d}),
    \end{align*}
    which converges to $\frac{|\det \Lambda_J|}{|\det(\widetilde g:\ g\in J)|} |E\cap AB|=\frac{|\det\Lambda_J|}{|\det(\widetilde g:\ g\in J)|} |\det A| \, |A^{-1}E\cap B|$ by Lemma \ref{lemma:equisubs}. Recalling \eqref{eq:det} and the definition of $\rho_J$ (see \eqref{rhoj}) we obtain
\[
|N^{-1/d}E_N^J\cap B|\to \frac{|\det \Lambda_J|}{|\det(\widetilde g:\ g\in J)|}|\det A|\, \rho_J|A^{-1}E\cap B|=|A^{-1}E\cap B|.
\]
We have thus proved \eqref{eq:subconv}, and by Lemma \ref{lemma:L1} we reach our conclusion.
\end{proof}

\subsection{Compactness}
Compactness of sequences of tile unions $T_N$ with equibounded $\F_N$-energy as defined in \eqref{eq:FNtile} is a direct consequence of compactness of finite perimeter sets, see \cite[Theorem~3.39]{AFP}. Indeed the assumption that $W(v)>0$ when $v$ is a normal to a face in the tiling directly implies that the functional $\F_N$ bounds the perimeter of $T_N$.

\subsection{Liminf inequality}\label{sec:glinfqc}
In order to prove the $\liminf$ inequality we will analyze every sublattice $\Lambda_J$ separately, and for each we will prove the lower bound \eqref{liminfj} concerning the energy of the bonds in a fixed direction $v\in\mathcal{V}$. To put these bounds together and prove the full $\liminf$ inequality, we will need the following combinatorial lemma. Let us start by defining
the edge-perimeter sets $\mathrm{EP}_v, \mathrm{EP}_{J,v}$ as follows: for $S\subset \Lambda_J$ we define 
\begin{equation}\label{notmb1}
\mathrm{EP}_{J,v}(S):=\left\{\{x,y\}\subset \Lambda_J\left|\begin{array}{c} x-y\text{ is the generator of }\Lambda_J\text{ parallel to }v,\\ x\in S, y\notin S,\end{array}\right.\right\},
\end{equation}
and for $X\subset \mathcal X$ we let 
\begin{equation}\label{notmb2}\mathrm{EP}_v(X):=\left\{\{x,y\}\subset \mathcal X:\ P(x),
P(y)\text{ share a face, } x\in X, y\notin X, x-y\parallel v\right\}.
\end{equation}
Recall also the definition of $J'_v$ given under \eqref{lines}.
\begin{lemma}\label{bondcount}
Let $X\subset \mathcal X$ be a finite set and let $v\in\mathcal V$. Then 
\begin{equation}\label{edgebound}
    \sum_{J\subset \mathcal G: \sharp J=d, J'_v\subset J} \sharp\ \mathrm{EP}_{J,v}(X\cap \Lambda_J) \le (\sharp \mathcal G-d+1)\ \sharp \mathrm{EP}_v(X).
\end{equation}
\end{lemma}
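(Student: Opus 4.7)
The plan is to prove \eqref{edgebound} one dual rail at a time, via a one‑dimensional telescoping–triangle inequality argument.

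First I would fix $v\in\mathcal V$ and a line $\ell=c+\R v$ obtained as the intersection $\bigcap_{g\in J'_v}H_{g,k_g}$ for some choice of levels $k_g$. By the general position assumption at the end of Section \ref{ssec:mgds}, every multigrid vertex on $\ell$ is the intersection of those $d-1$ hyperplanes with \emph{exactly one} further hyperplane $H_{g,k}$ with $g\in\mathcal G\setminus J'_v$, and so belongs to $\Lambda_{J'_v\cup\{g\}}$ for a single $g$. Enumerating these vertices in order along $\ell$ as $\dots,z_{-1},z_0,z_1,\dots$ thus produces a disjoint decomposition $\{z_i\}_{i\in\Z}=\bigsqcup_{J\supset J'_v,\,\sharp J=d}(\Lambda_J\cap\ell)$. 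Moreover, by the multigrid–tiling duality of Section \ref{sec_qctiling} (and the rail structure from Section \ref{ssec_rails}), two consecutive vertices $z_i,z_{i+1}$ correspond to adjacent tiles $P(z_i),P(z_{i+1})$ sharing a face with normal $\widetilde v$, so setting $\chi_i:=\1_{z_i\in X}$ the contribution of $\ell$ to $\sharp\mathrm{EP}_v(X)$ equals $\sum_i|\chi_i-\chi_{i+1}|$.

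Next I would fix $J\supset J'_v$ with $\sharp J=d$ and let $i_1<i_2<\dots$ be the indices for which $z_{i_k}\in\Lambda_J$. Consecutive $\Lambda_J$-points on $\ell$ differ by the generator $\lambda_{v,J}v$ of $\Lambda_J$ parallel to $v$, so the contribution of $\ell$ to $\sharp\mathrm{EP}_{J,v}(X\cap\Lambda_J)$ equals $\sum_k|\chi_{i_k}-\chi_{i_{k+1}}|$. Telescoping and applying the triangle inequality gives
\[
\sum_k|\chi_{i_k}-\chi_{i_{k+1}}|\ \le\ \sum_k\sum_{j=i_k}^{i_{k+1}-1}|\chi_j-\chi_{j+1}|\ \le\ \sum_i|\chi_i-\chi_{i+1}|,
\]
where the last step uses that the intervals $[i_k,i_{k+1})$ are pairwise disjoint subsets of $\Z$. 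Hence on each rail the $J$-contribution to $\sharp\mathrm{EP}_{J,v}(X\cap\Lambda_J)$ is bounded by the corresponding contribution to $\sharp\mathrm{EP}_v(X)$.

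Finally I would observe that the number of subsets $J\subset\mathcal G$ with $J'_v\subset J$ and $\sharp J=d$ is exactly $\sharp\mathcal G-(d-1)=\sharp\mathcal G-d+1$, obtained by adjoining a single element of $\mathcal G\setminus J'_v$. Summing the rail‑wise inequality over all such $J$ and then over all dual rails $\ell$ in direction $v$ yields \eqref{edgebound}. The only delicate ingredient is the clean decomposition of the vertices on each rail into a disjoint union over the sublattices $\Lambda_J$: the general position assumption is essential here, since otherwise a rail vertex could lie on more than $d$ hyperplanes, hence in several sublattices simultaneously, and the one‑dimensional telescoping bookkeeping would overcount.
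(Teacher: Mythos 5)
Your proof is correct and follows essentially the same route as the paper's: both arguments work one dual rail at a time, bound the $\mathrm{EP}_{J,v}$-contribution of each rail by its $\mathrm{EP}_v$-contribution (the paper by exhibiting a map $\phi_{v,w}$ from fine crossing pairs onto coarse ones, you by the equivalent telescoping inequality $|\chi_{i_k}-\chi_{i_{k+1}}|\le\sum_{j=i_k}^{i_{k+1}-1}|\chi_j-\chi_{j+1}|$), and then sum over the $\sharp\mathcal G-d+1$ admissible $J$. Your telescoping formulation is a clean repackaging of the paper's surjectivity argument, and your closing remark about general position correctly identifies the same hypothesis the paper relies on to make the sublattices $\Lambda_J$ disjoint.
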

\begin{proof}
    Consider a multigrid direction $v\in\mathcal V$ and note that there are $\sharp \mathcal G -d +1$ distinct choices of $J$ such that for some $\lambda_J>0$ the vector $\lambda_J v$ is a generator of $\Lambda_J$. These $J$ are obtained by adding any new vector from $\mathcal G$ to the $(d-1)$-ple $J'_v$, associated to $v$ as in the definition of $\mathcal V$. 
    \medskip
    
    Now fix a multigrid line $\ell:=\mathbb R v + c$ parallel to $v$. For each pair $\{x,y\}\in \mathrm{EP}_v(X)$ contained in  $\ell$, and each $d$-ple $J\in \mathcal G$ of the form $J=J'_v\cup \{w\}$, there exists exactly one pair $\{x',y'\}\subset \Lambda_J$ with $x'-y'=\lambda_Jv$ and satisfying the open segment inclusion $(x,y)\subset (x',y')$. We define $\phi_{v,w}(\{x,y\}):=\{x',y'\}$ in this case, obtaining a map 
    \begin{equation}\label{phivw}
    \phi_{v,w}:\mathrm{EP}_v(X)\cap\{\{x,y\}:\ x,y\in \ell\}\to \{\{x',y'\}\subset \ell\cap \Lambda_J: x'-y'=\lambda_J v\}.
    \end{equation}
    We claim that the image of $\phi_{v,w}$ contains $\mathrm{EP}_{J,v}(X\cap \Lambda_J)\cap\{\{x',y'\}:\ x',y'\in\ell\}$. Indeed, let $\{x',y'\}$ belong to the latter set, and let's say that $x'\in X\cap \Lambda_J$ and $y'=x'+\lambda_J v\notin X\cap\Lambda_J$. Then $[x',y']$ is subdivided by the multigrid points $\mathcal X$ into a concatenation of segments $[x,y]\subset \ell$ such that the first segment has starting point in $X$ and the last one has end point outside $X$. Therefore there exists at least one segment in the concatenation that has one end in $X$ and the other outside $X$, i.e. it is an element of $\mathcal E(X)$. It follows from the definition of $\phi_{v,w}$ that $\phi_{v,w}(\{x,y\})=\{x',y'\}$ in this case, proving the claim. More precisely,
    \begin{multline*}
    \phi_{v,w}\left(\mathrm{EP}_v(X)\cap \{\{x,y\}:\ x,y\in\ell\}\right)\\
    \supset \left(\{\{x',y'\}:\ x',y' \in \ell\} \cap \mathrm{EP}_{J_v\cup\{w\},v}X\cap \Lambda_{J_v\cup\{w\}})\right).
    \end{multline*}
    By taking a union over $w\in\mathcal G\setminus J_v$, the above claim implies that the multimap given by $\phi_v(\{x,y\}):=\{\phi_{v,w}(\{x,y\}):\ w\in\mathcal G\setminus J_v\}$ satisfies 
    \begin{equation}\label{phivimage}
    \phi_v\left(\mathrm{EP}_v(X)\cap \{\{x,y\}:\ x,y\in\ell\}\right)\supset \bigcup_{\substack{J\subset \mathcal G\\\sharp J=d\\ J_v\subset J}}\left( \{\{x',y'\}:\ x',y' \in \ell\} \cap \mathrm{EP}_{J,v}(X\cap \Lambda_J)\right).
    \end{equation}
    Taking the union of \eqref{phivimage} over the multigrid lines of the form $\ell=\mathbb R v +c$, and observing that $\phi_v$ is actually $(\sharp \mathcal G-d+1)$-to-one because distinct $\Lambda_J$'s are disjoint, the bound \eqref{edgebound} follows.
\end{proof}
\medskip
{\bf Proof of the liminf inequality}

We consider $T_N$ which is a union of $N$ tiles from $\mathcal T$, 
and such that $N^{-\frac1d}T_N\to E$ in $L^1$ as $N\to\infty$. The corresponding $X_N\subset \mathcal X$ in the dual space can be partitioned along the multigrid lattices, into the subsets $X_N^J:=X_N\cap \Lambda_J$. 

\medskip

Recall the definition of the auxiliary sets $E^J_N:=\bigcup_{x\in X_N^J}(x+U_{\Lambda_J})$, first introduced in \eqref{eq:ENJ}. By Proposition \ref{prop:L1corresp} we have the $L^1$-convergence
\begin{equation}\label{ejconv}
N^{-\frac1d} E^J_N\to A^{-1}E,
\end{equation}
and we may then use the same setup as in the lattice case, Section \ref{latliminf}, to obtain
\begin{align}
\liminf_{N\to\infty}N^{-\frac{d-1}d} \sharp\mathrm{EP}_{J,v}(X_N^J)&\ge \int_{\partial^*(A^{-1}E)}\frac{1}{\det \Lambda_J} \langle \nu, \lambda_{v,J}v \rangle_+
d\mathcal H^{d-1}\nonumber\\
&=\int_{\partial^*(A^{-1}E)}\frac{1}{\H^{d-1}(U_{\Lambda_{J'_v}})}\langle\nu, v \rangle_+ d\H^{d-1}  ,\label{liminfj}
\end{align}
where we used equation \eqref{eq:lambdavJ}. Summing over all $J$ that contain $J'_v$ and using Lemma \ref{bondcount}, we obtain
\begin{equation}\label{eq:combinatorial}
N^{-\frac{d-1}{d}}\# EP_v(X_N)W(v)\geq \frac{1}{\#\G-d+1}\sum_{J:J_v'\subset J} N^{-\frac{d-1}{d}} \# EP_{v,J}(X_N^J)W(v).
\end{equation}
Taking now the $\liminf$ of the last sum and using \eqref{liminfj} we obtain the sum of $\sharp \G -d + 1=\sharp\{J\subset\mathcal G: \sharp J=d,\ J'_v\subset J\}$ equal terms, which cancels the factor $\tfrac{1}{\#\G-d+1}$. We are left with:
\[
    \liminf_{N\to\infty} N^{-\frac{d-1}{d}}\# EP_v(X_N)W(v)\geq \int_{\partial^*(A^{-1}E)} \frac{1}{\H^{d-1}(U_{\Lambda_{J'_v}})}W(v)\langle \nu, v \rangle_+d\H^{d-1}.
\]
Finally we also sum among all $v\in \mathcal{V}$, using that by definition (see \eqref{entile2})
\[
    \mathcal E_{W}(T_N)= \mathcal E_{W}(X_N)=\sum_{v\in\mathcal V} W(v)\sharp\mathrm{EP}_v(X_N).
\]
We obtain
\begin{equation}\label{glinfqc}
    \liminf_{N\to\infty}N^{-\frac{d-1}d}\mathcal E_{W}(T_N)\geq \sum_{v\in\mathcal{V}}\int\limits_{\partial^*(A^{-1}E)}\frac{1}{\H^{d-1}(U_{\Lambda_{J'_v}})} W(v) \langle \nu,v \rangle_+ d\H^{d-1}.
\end{equation}
Finally, using the change of variables formula given by Proposition \ref{prop:equivariance} for $M=A$ and with $V$ replaced by $W\circ A^{-1}$, we obtain exactly the $\liminf$ inequality involving the functional defined by \eqref{pvnew} and \eqref{phivnew}.

\subsection{Limsup inequality}
The proof follows the overall strategy from Section \ref{sec:glimsup1}, with a few additions.
The statement that we prove, analogous to the one proved in Section \ref{sec:glimsup1}, is the following.

\medskip

{\it For a given measurable set $E$ there exists $N_0\in\mathbb N$ depending only on 
$E, \mathcal T, \mathcal X$ such that for $N\in\mathbb N, N\ge N_0$, 
there exists $T_N\subset \mathbb R^d$ which is the union of $N$ tiles from $\mathcal T$, such that $-N^{\frac{d-1}{d}} \mathcal E(T_N)\to P_{W}(E)$
and $N^{-\frac1d}T_N\to E$ in $L^1$.}

\medskip

{\bf Strategy comparison to the crystal case:}

\medskip

The main difference to the case of lattices treated in Section \ref{sec:glimsup1} 
is that the sets $T_N$ do not have volume proportional to $N$, as the tiles from $\mathcal T$ 
do not all have the same volume. On the other hand, the set $\mathcal X$ in the dual space 
is a union of lattices, due to the multigrid construction. 
This implies sharp volume bounds for each lattice, analogous to the ones from Section \ref{sec:glimsup1}.

\medskip

Due to the above, we set up our desired approximation via steps analogous to Steps 1-4 
from Section \ref{sec:glimsup1}, extended to the multigrid setting. 
This produces sets of tile centers $X_N$, 
and we use the Bounded Distortion Lemma \ref{bda} to determine distortion bounds in an extra step 5, 
in order to show that our final tile set $T_N$ dual to $X_N$ satisfies $N^{-\frac1d}T_N\to E$ in $L^1$.

\medskip

{\bf In what follows, we will describe the main changes required for the quasicrystal proof, 
referring to the steps from Section \ref{sec:glimsup1} for details.}

\medskip

{\bf Step 0.} {\it Reduction to the case of $A=\mathbb G\mathbb{\widetilde G}=Id$ in Lemma \ref{bda}.} 
By the affine invariance of our functionals, we can rescale the multigrid by 
$(\mathbb G\mathbb{\widetilde G})^{-1}$ and reduce to the above mentioned case. From now on we thus assume 
that $\mathbb G\mathbb{\widetilde G}=Id$. This allows to simplify the notation and focus on more essential ideas.

\medskip

{\bf Step 1.} {\it Approximating $E$ by polyhedral steps of volume $1$.} 
This step is exactly the same as in Section \ref{sec:glimsup1}.

\medskip

{\bf Step 2.} {\it Approximation of polyhedral sets by discrete sets from the multigrid.} 
Note that $\mathcal X$ is the disjoint union of the lattices 
$\{\Lambda_J:\ J\subset \mathcal G, \sharp J=d\}$, 
in which $\Lambda_J$ has density $1/|\det \Lambda_J|$. 
We then consider a rescaling of $E$ (now assumed to be polyhedral and of volume $1$) of cardinality close to $N$ 
by correcting for the density of $\mathcal X$, which is the sum of densities of $\Lambda_J$:
\begin{equation}\label{defyn*}
    Y_N:=\left(\frac{N^{\frac1d}}{{\rho_{\mathcal X}}^{\frac1d}}E\right)\cap \mathcal X,
    \quad \text{where }\rho_{\mathcal X}
    :=\sum_{J\subset \mathcal G:\ \sharp J=d}\frac{1}{|\det \Lambda_J|}.
\end{equation}
Condition \eqref{xeps} for the multigrid setting follows by applying
the bound for lattices to all the lattices $\Lambda_J$, with 
$J\subset\mathcal G, \sharp J=d$, and using the triangle inequality.
We obtain a version of \eqref{xeps} with a constant depending on $\mathcal X$ 
due to this:
\begin{equation}\label{xeps1}
    |\sharp Y_N - N|\le 
    C_{\mathcal X}\mathcal H^{d-1}(\partial E) N^{\frac{d-1}{d}} 
    \quad \text{for}\quad N\ge N_{E,\mathcal X}.
\end{equation}
As a consequence of \eqref{xeps1}, we can continue and possibly 
add or remove a cluster of multigrid points 
obtaining from $Y_N$ a set $X_N$ satisfying \eqref{xnenrgy} 
with constants depending on $\mathcal X$ now, concluding this step.

\medskip

{\bf Steps 3-4.} {\it Approximating $\mathcal E_{W}(T_N)$ by the contribution of interiors of faces.} 
Our decomposition of the energy will now use the rails from Section \ref{sec:rails} 
to decompose the energy $\mathcal E_{W}$ into contributions coming from all families of rails, 
which replaces the role of the contributions from parallel rays in Step 3 of Section \ref{sec:glimsup1}. 
Note that if $\mathbb G\widetilde{\mathbb G}^T=Id$ then rail directions are $v\in \mathcal V$. 
Thus $\mathcal H^{d-1}(U_{v^\perp})$ from Step 3 of Section \ref{sec:glimsup1} is now replaced by $\mathcal H^{d-1}(U_{\Lambda_{J'_v}})$ (see the paragraph preceding \eqref{uvperp} for this notation). 
With these substitutions we reach a quasicrystal analogue of 
\eqref{bdminko}, \eqref{bdint1} and \eqref{bdint2}, 
with the only difference being that $C_d'$ now depends on the multigrid data $\mathcal T$,
and that in the quasicrystal case $P^v(N^{\frac1d}E)$ has to be defined as the contribution
of $v\in\mathcal V$ to $P_{W}$, i.e. the contribution of the summand in \eqref{phivnew} corresponding
to $v$ only. With this we conclude Steps 3-4. We note here a weakened simplified version of 
the final estimate analogous to \eqref{bdint3}, in which $C_1, C_2$ only depend on $\mathcal T, \mathcal X, E, W$ (see \eqref{bdint3} for a more precise dependence in the lattice case):
\begin{eqnarray}
0&\ge&P_{W}(E) + \mathcal E_{W}(X_N)\nonumber\\
&\ge&C_1 N^{-\frac{d-1}{d^2}}\mathcal H^{d-1}(\partial E) - C_2 N^{-\frac{1}{d}}\sum_{P\text{ face of }E}\mathcal H^{d-2}(\partial P).\label{glimsupqc}
\end{eqnarray}
{\bf Step 5.} {\it Passage from the dual to the primal space.}
Note that we have $\mathcal E_{W}(X_N)=\mathcal E_{W}(T_N)$, 
and the volume of tiles in $\mathcal T$ which correspond to 
the symmetric difference between $Y_N\ \Delta\ X_N$ 
is of lower order $O(N^{\frac{d-1}d})=o(N)$ due to \eqref{xeps1}. 
Thus in order to check that $\left(\frac{N^{\frac1d}}{{\rho_{\mathcal X}}^{\frac1d}}\right)^{-1}Y_N\to E$ 
in $L^1$ it suffices to prove the following:
\begin{lemma}\label{volconv}
    If $Y_N, \rho_{\mathcal X}$ are as in \eqref{defyn*}, there exists $N_0\in\mathbb N$
    such that if $N\ge N_0$ and if $\mathbb G \widetilde{\mathbb G}^T=Id$, 
    then for the corresponding unions of tiles $S_N$ which correspond to points in $Y_N$, there holds
    \begin{equation}\label{ynconv}
        \left|S_N\ \Delta\ \left(\frac{N^{\frac1d}}{{\rho_{\mathcal X}}^{\frac1d}}E\right)\right| \le C\ N^{\frac{d-1}d}\ \mathcal H^{d-1}(\partial E) .
    \end{equation}
\end{lemma}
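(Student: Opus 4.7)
The plan is to exploit the Step~0 reduction $\mathbb G\widetilde{\mathbb G}^T=\mathrm{Id}$, under which the affine map $A$ of Lemma~\ref{bda} degenerates to a pure translation $Ax=x+c$, so that the dual and primal spaces can be identified up to the vector $c$. Writing $\lambda_N:=N^{1/d}/\rho_{\mathcal X}^{1/d}$ for brevity, the goal is to show that $S_N\,\Delta\,\lambda_N E$ sits in an $R_0$-neighborhood of $\partial(\lambda_N E)$ for a uniform constant $R_0=R_0(\mathcal T,\mathcal X)$, and then close with a Minkowski-type estimate on tubular neighborhoods of polytopes.

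First I would produce the constant $R_0$. Combining $|\phi(x)-Ax|\le K$ from Lemma~\ref{bda} with $P(x)\subset B(\phi(x),D/2)$, where $D:=\sup_{x\in\mathcal X}\mathrm{diam}\,P(x)\le\sum_{g\in\mathcal G}|\widetilde g|<\infty$ because tiles are parallelotopes with edges in the finite family $\{\widetilde g:g\in\mathcal G\}$, yields $P(x)\subset B(x,R_0)$ with $R_0:=K+|c|+D/2$, uniformly in $x\in\mathcal X$.

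Second, I would establish the inclusion $S_N\,\Delta\,\lambda_N E\subset\{p:\mathrm{dist}(p,\partial(\lambda_N E))\le R_0\}$. Any $p$ in the symmetric difference lies in a unique tile $P(y)$ (the family $\{P(y):y\in\mathcal X\}$ tiles $\mathbb R^d$), and by the definition $Y_N=\mathcal X\cap\lambda_N E$ we have $\mathbf 1_{\lambda_N E}(y)\neq\mathbf 1_{\lambda_N E}(p)$. Since $|p-y|\le R_0$ by the previous step, the ball $B(p,R_0)$ contains points on both sides of $\lambda_N E$, forcing $B(p,R_0)\cap\partial(\lambda_N E)\neq\emptyset$.

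Third, since $E$ is polyhedral by Step~1, $\mathcal H^{d-1}(\partial(\lambda_N E))=\lambda_N^{d-1}\mathcal H^{d-1}(\partial E)$, and a standard tubular-neighborhood estimate for polytopes gives
\[
\bigl|\{p:\mathrm{dist}(p,\partial(\lambda_N E))\le R_0\}\bigr|\le 2R_0\,\lambda_N^{d-1}\,\mathcal H^{d-1}(\partial E)+C_E R_0^2\lambda_N^{d-2},
\]
valid once $R_0/\lambda_N$ is small relative to the features of $E$, i.e. for $N\ge N_0(E,\mathcal T,\mathcal X)$. For such $N$ the second term is absorbed into the first, and substituting $\lambda_N^{d-1}=\rho_{\mathcal X}^{-(d-1)/d}N^{(d-1)/d}$ yields \eqref{ynconv} with a constant depending only on $\mathcal T,\mathcal X$. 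No substantive obstacle arises; the only delicate point is the uniformity of $R_0$, which Step~0 makes transparent by reducing $A$ to a translation. Without that reduction, the same argument would go through after replacing $\partial(\lambda_N E)$ by $\partial A(\lambda_N E)$ and inflating $R_0$ by a factor controlled by $\|A^{\pm 1}\|$.
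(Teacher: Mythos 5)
Your proof is correct and follows essentially the same route as the paper's: combine the bounded-distortion estimate of Lemma \ref{bda} (which under $\mathbb G\widetilde{\mathbb G}^T=\mathrm{Id}$ reduces $A$ to a translation) with the uniform bound on tile diameters to confine $S_N\,\Delta\,\lambda_N E$ to a fixed-width collar of $\partial(\lambda_N E)$, then bound the collar's volume by $CN^{\frac{d-1}{d}}\mathcal H^{d-1}(\partial E)$. If anything, your argument via the indicator functions $\mathbf 1_{\lambda_N E}(y)\neq\mathbf 1_{\lambda_N E}(p)$ spells out the containment of the symmetric difference slightly more explicitly than the paper, which only records the inclusion for $\partial S_N$.
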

\begin{remark}
    Although not used here, it is interesting to note that by the Laczkovic characterization of bounded displacement \cite{Lac92}, 
    the BD property shown in Lemma \ref{bda} is actually 
    equivalent to a property of the type \eqref{ynconv}. 
    The original work of \cite{Lac92} treats the case of cubic tiles, 
    while substitution tilings are treated in \cite{Sol11} based on the same basic idea, 
    and probably the multigrid analogue can be treated similarly too, based on Hall's marriage lemma. 
    We only prove the implication from Lemma \ref{bda} to \eqref{ynconv} here.
\end{remark}
\begin{proof}[Proof of Lemma \ref{volconv}:]
    We denote $E_N:=\frac{N^{\frac1d}}{{\rho_{\mathcal X}}^{\frac1d}}E$ and note that, again with
    the notation of Lemma \ref{bda}, there holds
    \[
        S_N=\bigcup_{y\in \widetilde{Y}_N}(\text{tile in $\mathcal T$ with center }y)
        ,\quad \text{where}\quad \widetilde Y_N:=\phi(Y_N)=\phi(\mathcal X\cap E_N).
    \]
    We note that if $C_{diam}$ is the maximum diameter of a tile from
    $\mathcal T$, and $C_{BD}:=\sup_{x\in\mathcal X}|\psi(x) - Ax|$ with the notation of Lemma \ref{bda}, then 
    \[
        \partial S_N\subset\{x\in\R^d:\ \mathrm{dist}(x,\partial E_N)\}<C_{BD}+C_{diam}.
    \]
    As the constants $C_{diam}, C_{BD}$ depend only on $\mathcal G, \widetilde{\mathcal G}$,
    it follows that for $N$ sufficiently large (depending on $\mathcal G$,
    $\widetilde{\mathcal G}$ and $E$) such that the bound \eqref{ynconv} holds, with $C$ 
    depending only on $\mathcal G, \widetilde{\mathcal G}$.
\end{proof}

\subsubsection{Conclusion of proof of Theorem \ref{thmqc}}
Recall that we have already proved the $\Gamma$-liminf inequality corresponding to the statement of Theorem \ref{thmqc} in Section \ref{sec:glinfqc}.

\medskip

For the $\Gamma$-limsup part, we use the $X_N$ as recovery sequence, and we showed in Step 5 above that $\sharp(Y_N\Delta X_N)=O(N^{\frac{d-1}{d}})$. The bound \eqref{ynconv} implies in particular that $\left(\frac{N^{\frac1d}}{{\rho_{\mathcal X}}^{\frac1d}}\right)S_N\to E$ in $L^1$, and by taking the limit in \eqref{glimsupqc} we find that $\lim_{N\to\infty}\mathcal{E}_{W}(X_N^*)=P_{W}(E)$, as desired. This concludes the proof of Theorem \ref{thmqc}.
\hfill $\square$

\section{Building complex Wulff shapes from simpler ones}\label{sec:wshapes}

In this section we briefly discuss the relation between the Wulff shapes of two potentials and the Wulff shape of their sum. We then show in Proposition \ref{prop:equivsymm} that for all our purposes it is sufficient to consider symmetric potentials, that is those satisfying $W(w)=W(-w)$ for every $w$, since the energy is invariant under symmetrization of $W$.

\subsection{Basic properties}
We first relate the functional
\[
P_\phi(E):=\int_{\partial^*E}\phi(\nu_E)d\mathcal H^{d-1}
\]
to equivalent formulations. Instead of $\phi=\phi_{W}$ as before, we consider first the case of a general convex positively $1$-homogeneous $\phi:\R^d\to [0,+\infty)$. We recall (see \cite{Tay78} for a proof) that the Wulff shape (i.e. a shape homothetic to any minimizer of $\int_{\partial^* E} \phi(\nu_E)d\mathcal H^{d-1}$ under fixed volume constraint) is then given by the subdifferential $\partial^-\phi(0)$ at the origin:
\begin{equation}\label{wulff-def}
\mathcal{W}_\phi:=\partial^-\phi(0)=\{x\in\R^d:\ \forall y\in\R^d,\ \langle x,y\rangle \le \phi(y)\}.
\end{equation}
\begin{remark}
In case $\mathcal{W}_\phi$ as defined in \eqref{wulff-def} has zero volume, 
then the isoperimetric problem is not well posed, however the definition
\eqref{wulff-def} still is the natural extension by continuity, 
of the definition for the non-degenerate case. Note that the Wulff shape $\mathcal{W}_\phi$ obtained in this case is the optimal shape for the case discussed in Section \ref{subsec:lowerdim}.
\end{remark}
For any nonnegative $\phi$ we have that $\mathcal{W}_\phi$ is compact, convex and contains the origin. Moreover, the origin is an interior point of $\mathcal{W}_\phi$ if $\mathcal{W}_\phi$ has nonzero volume. Viceversa if $K$ is a convex set containing the origin then we can define
\[
\phi^{K}(x):=\sup\{\langle x,y\rangle:\ y\in K\},
\]
and note that then $\phi^{\mathcal{W}_\phi}=\phi$ and $\mathcal{W}_{\phi^K}=K$. If $\phi(x)=\phi(-x)$ and $\phi(x)>0$ for $x\neq 0$, then $\phi$ defines a norm on $\R^d$, which is dual to the norm whose unit ball is $K$, and which has the polar body $K^*$ of $K$ as unit ball.
Common terminologies refer to $\phi^{K}$ as the \emph{surface tension function of $K$} or as the \emph{support function of $K$}.

\subsection{Sums and differences of potentials and of Wulff shapes}\label{sec:sumdiff}
Given two potentials $\phi_1$ and $\phi_2$ a natural question to ask is what is the Wulff shape associated to $\phi_1+\phi_2$. As shown in \cite[Thm. 1.7.5]{Sch14}, the answer is the Minkowski sum of the Wulff shapes of $\phi_1$ and $\phi_2$, i.e. we have that
\begin{equation}\label{wulffsum}
 \mathcal{W}_{\phi_1+\phi_2}=\mathcal{W}_{\phi_1}+\mathcal{W}_{\phi_2}.
\end{equation}
We note that a special case in which \eqref{wulffsum} gives important information, is that of $\phi(\nu):=|\langle \nu, v\rangle|$ for some $v\in \R^d\setminus\{0\}$. In this case we have $\mathcal{W}_\phi=[-v,v]$, according to \eqref{wulff-def}. By \eqref{wulffsum} it follows that for $\phi(\nu)=\sum_{v\in \mathcal N}W(v)|\langle \nu,v\rangle|$ with $W$ positive, we have that $\mathcal{W}_\phi=\sum_{v\in\mathcal N}W(v)[-v,v]$, in other words $\mathcal{W}_\phi$ is a \emph{zonotope}, i.e. a Minkowski sum of segments. Equivalently, a zonotope is a convex polytope all of whose $2$-dimensional faces are centrally symmetric \cite[Thm. 3.5.1]{Sch14}. More generally, a \emph{zonoid} is a convex body which is the Hausdorff-distance-limit of zonotopes. A zonoid $Z$ has in general support function $\phi^Z$ representable as a superposition
\begin{equation}\label{zonoid}
\phi^Z(\nu)=\int|\langle \nu,v\rangle| d\mu_Z(v),
\end{equation}
in which $\mu_Z$ is a positive measure. Using $1$-homogeneity, we may further impose that $\mu_Z$ 
be supported on the unit sphere $\mathbb S^{d-1}\subset\R^d$, 
in which case $\mu_Z$ is uniquely determined by $Z$ (see \cite[Thm. 3.5.3]{Sch14}).

\medskip

Recall that Minkowski subtraction is defined by $A-B:=\bigcap_{x\in B}(A-b)=\mathbb R^d\setminus((\mathbb R^d\setminus A) + B)$, or $A-B$ can be characterized as the maximal convex set $C$ such that $C+B\subset A$. As a consequence, $\phi^{A-B}$ is the convexification of $\phi^A-\phi^B$, i.e. the largest positively $1$-homogeneous convex function bounded above by $ \phi^A-\phi^B$. As already noted e.g. in \cite[Lem. 1(b)]{dk2000} with a different notation, we then have that $\mathcal{W}_{\phi^A-\phi^B}=\mathcal{W}_{\phi^{A-B}}=A-B$. Therefore, for convex $1$-homogeneous $\phi_1,\phi_2$ such that $\phi_1,\phi_2, \phi_1-\phi_2\ge 0$, we also have the following expression via Minkowski subtraction: 
\begin{equation}\label{eq:wulffdiff}
\mathcal{W}_{\phi_1-\phi_2}=\mathcal{W}_{\phi_1} - \mathcal{W}_{\phi_2}.
\end{equation}
Note that if $\phi(x)=\phi(-x), j=1,2$ and $\mathcal{W}_\phi$ is $2$-dimensional, then it is centrally symmetric, and is a zonoid. Centrally symmetric Wulff shapes of dimensions $\ge 3$ are generally not zonoids: see the examples below, in which $\phi$ is of the form \eqref{formphi} below.

\medskip

The important case for our work is that of $\phi$ of the special form
\begin{equation}\label{formphi}\phi(\nu)=\sum_{v\in\mathcal N}V(v)|\langle \nu,v\rangle|,
\end{equation}
for finite $\mathcal N\subset\mathbb R^d$, and where $V:\mathcal N\to\mathbb R$. We can always split the sum into two sums $\phi=\phi_+-\phi_-$ corresponding to the decomposition $\mathcal N=\mathcal N_-\cup\mathcal N_+$, so that $\mathrm{sign}(V(v))=\pm$ on $\mathcal N_\pm$, then by \eqref{eq:wulffdiff} $\mathcal{W}_\phi=\mathcal{W}_{\phi_+}-\mathcal{W}_{\phi_-}$. This shows that for $\phi$ as in \eqref{formphi}, the Wulff shape $\mathcal{W}_\phi$ is the Minkowski difference of two zonotopes: this is in general not a zonotope, as shown by a few examples below.

\medskip

{\bf Question: } Are all convex centrally symmetric polyhedra expressable as the difference of two zonotopes? (This is trivially true in dimension $2$, thus the question is really interesting for dimensions $3$ and higher.)

\begin{remark}\label{rmk:wulffdiff}
A useful observation (which we found in \cite[Thm. 1]{althoff}) which allows to express the difference of two zonotopes in an algorithmic simple way, is the following. For convex sets $A,B,C$ the Minkowski sum and difference satisfy $A-(B+C)=(A-B)-C$. Thus if $A$ is a convex set and a zonotope $Z$ is written as $Z=c+\sum_{i=1}^n[-v_i,v_i]$ with $c,v_1,\dots,v_n\in\mathbb R^d$, then we have, denoting $I_j=[-v_j,v_j]$,
\[
A-Z = \left(\cdots\left(\left((A-c)-I_1\right) -I_2\right)\cdots - I_n\right).
\]
To simplify the description further, note that $A-[-v,v]=(A+v)\cap(A-v)$.
\end{remark}

\begin{remark}
    A statement that we found in several instances in the convex analysis literature (see e.g. \cite[Cor. 3.5.7]{Sch14}, which is based on \cite{Sch70}), is that if $\mathcal{W}_\phi$ is a polytope with $\phi$ as in \eqref{formphi}, then it is necessarily a zonotope: this is false as shown by the examples below. The main correction required in \cite{Sch14} seems to be in the proof of \cite[Lem.~3.5.6]{Sch14}, when it is mentioned that two integrals over sets $B, B'$ with respect to a weight $\rho$ are equal: this can be false for general (possibly concentrating) $\rho$, such as $\rho$ with atoms on $B\setminus B'$, as is our case. It is important to emphasize that the above theorem and proof remain valid for regular convex bodies, and counterexamples are possible only the case of polytopes and non-smooth convex bodies.
\end{remark}

\begin{example}[Quasicrystal non-zonotope Wulff shapes] One of the main new ideas in \cite{IngSte89} was to allow negative weights, to produce new limiting isoperimetric shapes in quasicrystals. In particular, seemingly for the first time in \cite{IngSte89}, a possible choice of $\mathcal N$ and of a signed weight $V$ over $\mathcal N$ is given, such that if $\phi$ as in \eqref{formphi}, then $\mathcal{W}_\phi$ is a regular dodecahedron. As mentioned above, such $\mathcal{W}_\phi$ is not a zonotope however, importantly, it was observed in actual quasicrystalline materials. The choice of \cite{IngSte89} is to take $\mathcal N=\mathcal N_+\cup\mathcal N_-$ where $\mathcal N_-$ are the directions of vertices of a regular icosahedron centered at the origin, and $\mathcal N_+$ are the unit vectors $v$ pointing towards the midpoints of the sides of the same icosahedron. Then they set $V=-1$ on $\mathcal N_-$ and $V(v)=5/6$ on $\mathcal N_+$. Also icosahedral Wulff shapes can be obtained with $\phi$ as in \eqref{formphi}, see \cite[Fig. 6]{IngSte89}.
\end{example}

\begin{remark}Related to the above example, note that any choice $V=c_+1_{\mathcal N_+} - 1_{\mathcal N_-}$ with $c_+\in(\frac{3\varphi + 4/3}{\sqrt{2+\varphi}(3\varphi+1)},5/6]$ where $\varphi=\frac12(1+\sqrt5)$ also gives a dodecahedral $\mathcal{W}_\phi$. Here the constant $5/6$ is taken directly from \cite{IngSte89}, whereas the lower bound on $c_+$ follows by requiring $\phi_V>0$.

The allowed interval for $c_+$ is given by the conditions that $\phi>0$ and that only the faces with normals $\mathcal N_-$ give supporting planes of $\mathcal{W}_\phi$.

\end{remark}

\begin{example}[Crystalline non-zonotope Wulff shapes]\label{crystnozono}\hfill 

{\bf A. } It is worth mentioning that non-zonotope crystal shapes can in theory be formed, and are actually observed in nature, also in the presence of true lattice-like microscopic structure, such as in pyrite, which forms a (non-regular) dodecahedral Wulff shape, also known as a "pyritohedron", and having vertices
\[
(\pm1,\pm1,\pm1) \quad\text{and cyclic permutations of}\quad \left(0, \pm\frac32, \pm\frac34\right).
\]
This shape can be obtained with $\phi$ in the form \eqref{formphi}, for example with
\begin{eqnarray*}
 \mathcal N_+&:=&\{(\pm 1,0,0), (\pm 4, \pm 2, \pm 1) \text{ and cyclic permutations}\},\\
 \mathcal N_-&:=&\{(0,\pm2,\pm4)\text{ and cyclic permutations}\}.
\end{eqnarray*}
We can set $V_+(\pm e_j)=2/3$ and $V_+(v)=\frac{4}{21}$ for the remaining vertices in $\mathcal N_+$, and then the Wulff shape of $\phi_{V_+}$ is the zonohedron (recall that by definition a zonohedron is a $3$-dimensional zonotope) with set of edge vectors equal to those of the above pyritohedron. Note that $\mathcal{W}_{\phi_{V_+}}$ then has $12$ decagonal sides corresponding to pentagonal pyritohedron sides, as well as $30$ rectangular faces and $20$ hexagonal faces. If now $V_-$ is constant on $\mathcal N_-$, the Wulff shape of $\phi_{V_-}$ is a zonohedron with sides equal to the normals to pyritohedron faces. If the constant value of $V_-$ is large enough, the Minkowski subtraction of $\mathcal{W}_{\phi_{V_-}}$ from $\mathcal{W}_{\phi_{V_+}}$, which gives $\mathcal{W}_{\phi_{V_+}-\phi_{V_-}}$, has by Remark \ref{rmk:wulffdiff} the effect of removing suitable sides of the decagon facets from $\mathcal{W}_{\phi_{V_+}}$, and thus allows to obtain the pyritohedron as $\mathcal{W}_{\phi_{V}}=\mathcal{W}_{\phi_{V_+}-\phi_{V_-}}$.

{\bf B. } As a simpler situation realizable in $\mathbb Z^3$, and such that the Wulff shape $\mathcal{W}_\phi$ is an octahedron (again not a zonotope, thus not realizable with positive $V$), we can take $\phi$ as in \eqref{formphi} and $\mathcal N=\mathcal N_-\cup\mathcal N_+$ where 
\[\mathcal N_+=\left\{\text{cyclic permutations of }(0,\pm 1,\pm1)\right\}, \quad \mathcal N_-=\{\pm e_1,\pm e_2,\pm e_3\},
\]
and again we take  $V=V_+-V_-=c_+1_{\mathcal N_+} - 1_{\mathcal N_-}$. Note that $\mathcal N_+$ are the nearest-neighbors in a face-centered-cubic (FCC) lattice, and the Wulff shape corresponding to $V_+$ is a truncated cuboctahedron of sidelength $c_+2\sqrt 2$, while the Wulff shape corresponding to $V_-$ is a cube of sidelength $2$. Following the previous remark as well as Remark \ref{rmk:wulffdiff} we find that values $c_+\in\left(\frac14, \frac12\right]$, give an octahedral Wulff shape.
\end{example}

\subsection{\texorpdfstring{$\Gamma$}{Gamma}-limits for signed potentials \texorpdfstring{$V$}{V}}\label{sec:gammalimsigned}
While it is interesting to note that non-zonohedral Wulff shapes can occur, as mentioned in \cite{IngSte89} and explained in Section \ref{sec:sumdiff}, another relevant question in the framework of the current paper is whether these shapes can occur as $\Gamma$-limits of discrete perimeter functionals like in Theorems \ref{thm:main} and \ref{thmqc}. In this section we show that this is possible but only under additional hypotheses, without which the convergence result can be false. In particular the study at the discrete level presents new difficulties which are invisible at the continuum level.

\begin{example}[Admissible $\phi_V$, non-converging discrete $\mathcal P_V$]\label{negnbs}
Consider the case that, on the lattice $\mathbb Z^2$ and with the notation of Section \ref{sec:sumdiff}, $\mathcal N_+=\{(\pm 1,\pm 1)\}$ and $\mathcal N_-=\{\pm e_1, \pm e_2\}$. These choices give a nontrivial (square) Wulff shape for $\phi_V$ with the weights $V=c 1_{\mathcal N_+} - 1_{\mathcal N_-}$, provided $c> 1$, as easily seen by the method in \eqref{eq:wulffdiff}. 

On the other hand, note the following pathological example: say that for $N$ a square, $X_N=K_N\cap \mathrm{Span}_{\mathbb Z}(\mathcal N_+)$, where $K_N$ is a square of sidelength $\sqrt{2 N}$ with edges parallel to the vectors of $\mathcal N_+$ (thus $K_N$ is equal to a rescaling of the continuum Wulff shape corresponding to weight $\phi_V$ for $c>1$). In this case, in the ``bulk'' of $K_N$, all available $\mathcal N_-$-bonds participate to its $V$-perimeter. For large $N$ the number of negative contributions is $O(N)$ while possible positive contributions are $O(N^{\frac12})$, corresponding to bonds from the ``boundary layer'' of $K_N$. This means that the discrete $V$-weighted perimeter of $X_N$ is $- C N +O(N^{\frac12})$ for some constant $C$ depending on $V$. This holds even for $c>1$ very large, and exhibits an essential difference compared to the continuum perimeter $\phi_V$ (which as mentioned above, is positive for $c>1$). 

The above phenomenon can be exploited to prove that the $\Gamma$-convergence is also false for the above $V$: if to the $X_N$ from the above paragraph we add all the missing points of $K_{N-N_1}\cap \mathbb Z^2$ for some fixed large $N_1$, it is not hard to see that the so-obtained set $\widetilde X_{\widetilde N}$ has cardinality $\widetilde N\le 2N$ and discrete $V$-weighted perimeter bounded below by 
\[
-CN_1N^{\frac12} + C_1 N^{\frac12}\le -\widetilde C N^{\frac12}=-\frac{\widetilde C}{\sqrt 2}\widetilde N^{\frac12},
\]
for large enough $N_1$. For the above to hold, we need to choose $N_1$ depending on $V$ only, and thus we can fix it independently of $N$. Informally speaking, the negative contributions in a $N_1$-neighborhood of $\partial K_N$ can always compensate the positive contributions in a $\sqrt2$-neighborhood of $\partial K_N$. On the other hand, if $N_1$ is fixed then $\widetilde N^{-\frac12}E_{\widetilde N}(\widetilde X_N)\stackrel{L^1}{\to} K_0$, where $K_0$ is a cube of sidelenght $\sqrt2$ and sides parallel to vectors from $\mathcal N_+$, which is a rescaling the Wulff shape of $\phi_V$. The $\phi_V$-perimeter of $K_0$ is positive and the rescaled $V$-weighted perimeters of $\widetilde X_N$ are negative, showing that the $\Gamma$-convergence result of Theorem \ref{thm:main} is false for all our $V$ with $c>1$.\end{example}

Note that the pathological example \eqref{negnbs} can be modified to hold in any dimension and in a variety of situations. We formulate the following condition on $V$, depending on a parameter $\epsilon>0$:
\begin{equation}\label{condbepsilon}
    \text{For each }X\subset \mathcal L,\quad \mathcal F_V(X) \ge \epsilon \mathcal F_{1_{\mathcal N}}(X).
\end{equation} 
This condition forbids the pathology of Example \eqref{negnbs}, at least for the case of finite $\mathcal N$, and it may help for an extension of the $\Gamma$-convergence result of Theorem \ref{thm:main} to signed $V$, if satisfied. We leave this investigation for future work.

\medskip

Note that Example \eqref{negnbs} produces non-$\Gamma$-converging discrete energies mainly based on the structure of $\mathcal N_+, \mathcal N_-$ and not as much on the further degrees of freedom of the  weights $V_+,V_-$. In the general case this phenomenon is confirmed by the following Proposition \ref{connectedhyp}. We recall that if $\mathcal N\subset\mathbb R^d$ is a subset, we say that a set $A\subset \mathbb R^d$ is {\bf $\mathcal N$-connected}, if for any $x,y\in A$ there exists an $\mathcal N$-path in $A$ from $x$ to $y$, i.e. there exists $n\in\mathbb N$ and points $x=x_0,x_1,\dots,x_n=y \in A$ such that $x_i-x_{i-1}\in\mathcal N_+$ for all $i$. With this definition, we have the following:
\begin{proposition}\label{connectedhyp}
Let $V=V_+-V_-$ and $\mathcal N=\mathcal N_+\cup \mathcal N_-$ be as before. If $\inf V_+>0$ and $\mathcal N_-\cup \{0\}$ is finite and $\mathcal N_+$-connected, then there exists a constant $C=C_V>0$ such that \eqref{condbepsilon} holds for $\epsilon=1$ for $W=C_V V_+ -V_-$.
\end{proposition}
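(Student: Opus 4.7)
The plan is to reduce the inequality $\mathcal F_W(X)\ge \mathcal F_{1_{\mathcal N}}(X)$ to a combinatorial bound dominating the $\mathcal N_-$-boundary bond counts by the $\mathcal N_+$-boundary bond counts, using the $\mathcal N_+$-connectedness hypothesis much as in Lemma \ref{lemma:combinatorial}. First I would set, for $u\in\mathcal L\setminus\{0\}$ and $X\subset \mathcal L$, the quantity $B_u(X):=\#\{x\in X:\, x+u\notin X\}$, so that any weight $U$ on $\mathcal N$ satisfies $\mathcal F_U(X)=\sum_{v\in\mathcal N}U(v)B_v(X)$. A two-case dichotomy on whether $x+u$ lies in $X$ or not immediately yields the subadditivity
\[
B_{u+u'}(X)\le B_u(X)+B_{u'}(X),
\]
which extends by induction to $B_{w_1+\cdots+w_k}(X)\le\sum_{i=1}^k B_{w_i}(X)$ for any finite sequence $w_1,\dots,w_k\in\mathcal L$.

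Next I would invoke the $\mathcal N_+$-connectedness of $\mathcal N_-\cup\{0\}$: for each $v\in\mathcal N_-$ choose an $\mathcal N_+$-path $0=u_0^v,\dots,u_{k(v)}^v=v$ inside $\mathcal N_-\cup\{0\}$, and set $w_i^v:=u_i^v-u_{i-1}^v\in\mathcal N_+$, so that $v=\sum_{i=1}^{k(v)} w_i^v$. Since $\mathcal N_-$ is finite, $K:=\max_{v\in\mathcal N_-}k(v)$ is a finite integer depending only on $\mathcal N$. Applying the subadditivity then gives
\[
B_v(X)\le \sum_{i=1}^{k(v)}B_{w_i^v}(X)\le K\sum_{w\in\mathcal N_+}B_w(X)\qquad\text{for every }v\in\mathcal N_-.
\]

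To conclude, I would expand
\[
\mathcal F_W(X)-\mathcal F_{1_{\mathcal N}}(X)=\sum_{v\in\mathcal N_+}(C_VV_+(v)-1)B_v(X)-\sum_{v\in\mathcal N_-}(V_-(v)+1)B_v(X),
\]
and set $M:=\sum_{v\in\mathcal N_-}(V_-(v)+1)<\infty$. The step above bounds the negative sum from above by $KM\sum_{w\in\mathcal N_+}B_w(X)$, while the assumption $\inf V_+>0$ bounds the positive sum from below by $(C_V\inf V_+ -1)\sum_{w\in\mathcal N_+}B_w(X)$. Choosing $C_V:=(1+KM)/\inf V_+$, which depends only on $\mathcal N$ and $V$, then makes the whole difference nonnegative for every $X\subset\mathcal L$, which is the desired inequality.

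The only mildly delicate point is the telescoping bound $B_v(X)\le K\sum_{w\in\mathcal N_+}B_w(X)$ for $v\in\mathcal N_-$; once this is established, the remainder is bookkeeping with nonnegative quantities. Note that this is precisely what breaks down in Example \ref{negnbs}: there each $v\in\mathcal N_+=\{(\pm1,\pm1)\}$ has even coordinate sum, so $e_1\in\mathcal N_-$ admits no $\mathcal N_+$-decomposition, which is exactly why the connectedness hypothesis cannot be dropped.
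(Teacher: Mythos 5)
Your proof is correct and follows essentially the same route as the paper's: both reduce the claim to showing that the total count of $\mathcal N$-boundary bonds is dominated by a constant times the count of $\mathcal N_+$-boundary bonds, using the $\mathcal N_+$-connectedness to decompose each $v\in\mathcal N_-$ into $\mathcal N_+$-steps exactly as in Lemma \ref{lemma:combinatorial} (you package this as the subadditivity $B_{u+u'}(X)\le B_u(X)+B_{u'}(X)$, the paper as an explicit path assignment with a bounded edge-multiplicity count, which is the same argument). Your bookkeeping of the final constant $C_V=(1+KM)/\inf V_+$ is in fact more explicit than the paper's.
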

\begin{proof}
The proof is in a similar spirit to the one of Lemma \ref{lemma:combinatorial}, so we refer to that proof more details. 

\medskip To each pair of $\mathcal N$-neighbors $x,y\in\mathcal L$ we assign an $\mathcal N_+$-path in $\mathcal L$ from $x$ to $y$, denoted $p_{x,y}$. To do this, it is sufficient to arbitrarily choose $p_{0,v}$ for all $v\in\mathcal N_-$, define $p_{0,v}=\{0,v\}$ for $v\in\mathcal N_+$, then extend the definition by translation to all pairs of $\mathcal N$-neighbors $x,x+v$. Since $\mathcal N$ is finite, we find by a simple covering argument that there exists $C_1>0$ depending on our choice of paths $p_{0,v},v\in\mathcal N_-$ only, such that
\[
\max_{v\in\mathcal N_+}\max_{z\in\mathcal L} \sharp \{p_{x,y}:\ \{z,z+v\}\text{ is a step of }p_{x,y}\}\le C_1.
\]
If $X\subset\mathcal L$ and we have $x\in X, x+v\notin X, v\in\mathcal N$, then $p_{x,x+v}$ has at least one edge $\{x_{i-1},x_i\}$ such that $x_{i-1}\in X, x_i\notin X$. It follows that 
\begin{eqnarray*}
\lefteqn{C_1\ \sharp\{(x,y):\ y-x\in\mathcal N_+,\ x\in X, y\notin X\} }\\&\ge&\sharp \{(x,y):\ y-x\in\mathcal N, x\in X, y\notin X\}\\ 
&=& \mathcal F_{1_{\mathcal N}}(X).
\end{eqnarray*}
Thus if we choose $C_V=C_1 \inf_{v\in V_+}V(v)$ the thesis follows.
\end{proof}

\begin{remark}\label{nozono}
While the choice of constant $C_V$ from Proposition \ref{connectedhyp} which follows from the proof method is in general far from optimal, there exists such $C_V$ such that \eqref{condbepsilon} holds and the Wulff shape $\mathcal{W}_{W}$ for $W$ as in Proposition \ref{connectedhyp}, is not a zonotope. For example, part B of Example \ref{crystnozono} satisfies the hypotheses of Proposition  \ref{connectedhyp}. The ensued potential $C_VV_+ - V_-$ for very large $C_V>1$ creates as Wulff shape a truncated octahedron whose hexagonal sides have alternating very long and very short edges, and thus are not centrally symmetric, so that the corresponding $\mathcal{W}_{W}$ is not a zonotope. 
\end{remark}
\begin{remark}\label{qcnozono}
The same considerations as in the rest of this subsection, in particular the ones from Remark \ref{nozono}, can be applied to the case of quasicrystals under signed potentials $V$ with similar conclusions, but the condition from Proposition \ref{connectedhyp} can only justify some of the non-zonohedral Wulff shape limits corresponding to the $\phi_V$-Wulff shapes examples presented in \cite{IngSte89}, and not all of them. For a complete theory a sharper condition on $V$ for the extension of Theorems \ref{thm:main} and \ref{thmqc} to signed $V$ is needed. We plan to address the study of these sharp versions to future work.
\end{remark}

\subsection{Reduction to the symmetric case}
In this section we show that for the study of Wulff shapes, it is no restriction to assume that the support function $\phi$ is an even function.
The case of functionals 
\begin{equation}\label{phivgen}
\phi_V(x):= \sum_{v\in \mathcal N}V(v)\langle v,x\rangle_+,
\end{equation}
coming from a potential $V:\mathcal N\to \mathbb R$ where $\mathcal N\subset\mathbb R^d$ is finite or countable and $\sum_{v\in \mathcal N}|v||V(v)|<\infty$ can be reformulated in a form generalizing \eqref{zonoid}, in which instead of the absolute value we consider the positive part of the integrand, and we consider signed finite measures with at most countable support. Indeed, given a potential $V$ as above, we observe that
\begin{equation}\label{zonoid+}
\phi_V(\nu)=\int \langle \nu,v\rangle_+ d\mu(v)
\end{equation}
where $\mu=\sum_{v\in \N} V(v)\delta_v$.  

\medskip Note that our general class of $V$ includes the case of finite $\mathcal N$ and constant-sign $V$ from in \eqref{eq:phiV}. Also note that every $\phi$ in the form \eqref{zonoid} can be put in the form \eqref{zonoid+}, just considering the symmetrized measure $\mu=\mu_Z^{sym}=\tfrac12(\mu_Z+(-Id)_\# \mu_Z)$ (which does not change the value in \eqref{zonoid}).

\medskip As we now show, in \eqref{phivgen} we may always replace "$\langle v,x\rangle_+$" by "$|\langle v,x\rangle|$" and consider neighbor sets $\mathcal N=-\mathcal N$ and even $V$ only. Indeed, for every finite perimeter set $E$ in $\R^d$ we have
\begin{equation}\label{eq:equivP}
P_V(E)=P_{V^{sym}}(E)
\end{equation}
where $V^{sym}(v)=\tfrac12\big(V(v)+V(-v)\big)$. This equality is due to the area formula and to the fact that, for every direction $e\in \S^{d-1}$ that we fix, the number of times that the line $\R e$ “enters” $E$ is equal to the number of times it “exits” $E$. Note that \eqref{eq:equivP} could fail in the case of relative perimeter functionals $P_\phi(E,\Omega)$, where we compute the perimeter of $E$ relative to a fixed open subset $\Omega$ of $\R^d$, or when the potential $V$ is not a superposition of terms of type $\langle\nu,v\rangle_+$. However these cases are not considered here, and the two formulations \eqref{phivgen} and \eqref{zonoid+} are completely equivalent in our setting. For this reason we will assume from now on that $V$ (and thus $\phi_V$) is symmetric under reflection with respect to the origin.

\begin{proposition}[Reduction to the symmetric case]\label{prop:equivsymm}
For every finite measure $\mu$ on $\S^{d-1}$ define
\[
\phi_\mu(\nu)=\int_{\S^{d-1}} \langle \nu,v\rangle_+d\mu(v).
\]
Let $\widetilde\mu=(-Id)_\# \mu$. Then for every finite perimeter set $E\subset\R^d$ we have
\[
\int_{\partial^* E} \phi_\mu(\nu)d\H^{d-1}=\int_{\partial^* E} \phi_{\widetilde\mu}(\nu)d\H^{d-1}.
\]
\end{proposition}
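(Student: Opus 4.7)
\emph{Proof proposal.} The plan is to exchange the order of integration and reduce the claim to a pointwise (in $v$) identity. Since $\mu$ is a finite Borel measure and the integrand $\langle \nu,v\rangle_+$ is bounded by $1$, Fubini gives
\begin{equation*}
 \int_{\partial^*E}\phi_\mu(\nu)\,d\mathcal{H}^{d-1} = \int_{\mathbb{S}^{d-1}}\!\!\int_{\partial^*E}\langle\nu(x),v\rangle_+\,d\mathcal{H}^{d-1}(x)\,d\mu(v),
\end{equation*}
and the analogous identity for $\widetilde\mu$, rewritten via the change of variables $v\mapsto-v$ in the outer integral, becomes
\begin{equation*}
 \int_{\partial^*E}\phi_{\widetilde\mu}(\nu)\,d\mathcal{H}^{d-1} = \int_{\mathbb{S}^{d-1}}\!\!\int_{\partial^*E}\langle\nu(x),-v\rangle_+\,d\mathcal{H}^{d-1}(x)\,d\mu(v).
\end{equation*}
Thus it suffices to prove that, for every fixed $v\in\mathbb{S}^{d-1}$,
\begin{equation*}
 \int_{\partial^*E}\langle\nu(x),v\rangle_+\,d\mathcal{H}^{d-1}(x) = \int_{\partial^*E}\langle\nu(x),-v\rangle_+\,d\mathcal{H}^{d-1}(x).
\end{equation*}

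Using $\langle\nu,-v\rangle_+=\langle\nu,v\rangle_-$ and $\langle\nu,v\rangle=\langle\nu,v\rangle_+-\langle\nu,v\rangle_-$, this pointwise identity is equivalent to $\int_{\partial^*E}\langle\nu,v\rangle\,d\mathcal{H}^{d-1}=0$, which one recognizes as the Gauss--Green formula applied to the constant, divergence-free vector field $X\equiv v$ on the finite-perimeter set $E$.

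For a slicing argument in the style hinted at in the discussion preceding the proposition, I would invoke the $BV$-slicing theorem (Maggi, Thm.~18.11): for a.e. $y\in v^\perp$, the slice $E_y:=\{t\in\mathbb{R}:y+tv\in E\}$ has finite perimeter in $\mathbb{R}$, hence is a finite disjoint union of open intervals, and the tangential Jacobian of the projection $\pi_{v^\perp}$ produces
\begin{equation*}
 \int_{\partial^*E}\langle\nu,v\rangle_+\,d\mathcal{H}^{d-1} = \int_{v^\perp}\#\{\text{right endpoints of }E_y\}\,d\mathcal{H}^{d-1}(y),
\end{equation*}
together with the same formula for the ``$-$'' part and the left endpoints. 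When $|E|<\infty$, Fubini forces $|E_y|<\infty$ for a.e. $y$, so each such slice is a finite union of \emph{bounded} open intervals, and its left and right endpoints are in bijection; integrating over $y\in v^\perp$ gives the desired pointwise identity. Plugging this back into the Fubini expressions above yields the proposition.

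The only technical obstruction is finite-perimeter sets of infinite measure: in $\mathbb{R}^d$ with $d\ge2$ one either has $|E|<\infty$ or $|E^c|<\infty$, and since $\partial^*E=\partial^*E^c$ with $\nu_{E^c}=-\nu_E$ (an operation that also exchanges $\langle\nu,v\rangle_+$ with $\langle\nu,v\rangle_-$), the second case reduces to the first.
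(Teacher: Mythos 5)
Your proof is correct, and it reaches the conclusion by a genuinely different route from the paper's. Both arguments ultimately rest on the same geometric fact --- that along almost every line in direction $v$ the ``entry'' points of $E$ are in bijection with the ``exit'' points --- but the paper proves this first for bounded smooth sets via the classical area formula and then extends to general finite-perimeter sets by smooth approximation combined with Reshetnyak's continuity theorem, whereas you work directly on the reduced boundary through the $BV$ slicing theorem (Vol'pert), which eliminates the approximation step entirely. Your preliminary observation that the pointwise identity is equivalent to $\int_{\partial^* E}\langle\nu_E,v\rangle\,d\mathcal H^{d-1}=0$, i.e.\ Gauss--Green for the constant vector field $v$, is in fact an even shorter self-contained proof when $|E|<\infty$ and could replace the slicing argument altogether. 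You are also more careful than the paper on one point: the paper's density argument tacitly assumes $|E|<\infty$ (an infinite-measure set cannot be approximated in $L^1$ by bounded smooth sets), while your isoperimetric dichotomy $\min(|E|,|E^c|)<\infty$ for $d\ge 2$, together with $\partial^*E^c=\partial^*E$ and $\nu_{E^c}=-\nu_E$, covers the remaining case; note that this really requires $d\ge 2$, since for $d=1$ the statement fails for a half-line. The only steps worth making explicit are the joint measurability of $(x,v)\mapsto\langle\nu_E(x),v\rangle_+$ and the finiteness of $(\mathcal H^{d-1}\llcorner\partial^*E)\otimes\mu$ needed for Fubini, both of which are immediate since $E$ has finite perimeter and $\mu$ is finite.
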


\begin{proof}
We first prove the result for bounded smooth sets $E$ and then conclude by a density argument. Assume then that $E$ is a bounded, finite perimeter set with smooth boundary. Fix a direction $e\in \S^{d-1}$. We decompose $\partial E$ with respect to the direction $e$, defining 
\begin{align*}
\partial_e^+ E&=\{x\in \partial E: \langle \nu_E(x),e\rangle>0\}\\
\partial_e^- E&=\{x\in \partial E: \langle \nu_E(x),e\rangle<0\}\\
\partial_e^{\,0} E&=\{x\in \partial E: \langle \nu_E(x),e\rangle=0\}.
\end{align*}
Then by the area formula
\[
P(E)\geq \int_{\partial E} \langle \nu(x),\pm e\rangle_+ d\H^{d-1}(x)
=\int_{\pi_{e^\perp} (E)} N_e^\pm(E,y)d\H^{d-1}(y)
\]
where $\pi_{e^\perp}$ is the orthogonal projection on $e^\perp$ and where
\begin{align*}
N_e^+(E,y)&=\#\big(\pi_{e^\perp}^{-1}(y)\cap \partial_e^+E\big)
\\
N_e^-(E,y)&=\#\big(\pi_{e^\perp}^{-1}(y)\cap \partial_e^-E\big).
\end{align*}
In particular both $N_e^\pm(E,y)$ are finite for $\H^{d-1}$-a.e. $y\in \pi_{e^\perp}(E)$.

Now we claim that for $\H^{d-1}$-a.e. $y\in \pi_{e^\perp}(E)$ we have $N_e^+(E,y)=N_e^-(E,y)$. Indeed, by the area formula we also have $\pi_{e^\perp}(\partial_e^{\,0} E)=0$, which means that for $\H^{n-1}$-a.e. $y\in \pi_{e^\perp}(E)$ the line $\pi_{e^\perp}^{-1}(y)$ intersects $\partial E$ always where the normal has nonzero scalar product with $e$. As observed above, both cardinalities are finite for $\H^{d-1}$-a.e. $y$, and by Fubini for $\H^{d-1}$-a.e. $y$ the $1$-dimensional measure of $E\cap \pi_{e^\perp}(y)$ is finite. It follows that $E\cap \pi_{e^\perp}^{-1}(E)$ is a finite union of bounded intervals, hence we conclude that 
\[
N_e^+(E,y)=N_e^-(E,y)\quad\text{ for $\H^{d-1}$-a.e. $y$}
\]
as wanted.

We thus obtain
\begin{align*}
\int_{\partial E} \phi_\mu(\nu(x))d\H^{d-1}(x) &=\int_{\partial E}\left(\int_{\S^{d-1}}\langle \nu(x),v\rangle_+d\mu(v)\right)d\H^{d-1}(x)\\
&=\int_{\S^{d-1}}\left(\int_{\partial E}\langle \nu(x),v\rangle_+d\H^{d-1}(x)\right)d\mu(v)\\
&=\int_{\S^{d-1}}\left(\int_{\pi_{e^\perp}} N_e^+(E,y) d\H^{d-1}(y)\right)d\mu(v)\\
&=\int_{\S^{d-1}}\left(\int_{\pi_{e^\perp}} N_e^-(E,y) d\H^{d-1}(y)\right)d\mu(v)\\
&=\int_{\S^{d-1}}\left(\int_{\partial E}\langle \nu(x),-v\rangle_+d\H^{d-1}(x)\right)d\mu(v)\\
&=\int_{\S^{d-1}}\left(\int_{\partial E}\langle \nu(x),v\rangle_+d\H^{d-1}(x)\right)d\widetilde\mu(v).
\end{align*}
This proves the result for smooth, bounded sets.

To prove the general case, let $E$ be a finite perimeter set. Then there exists a sequence $E_j$ of bounded, smooth sets such that $E_j\to E$ in $L^1$ and $P(E_j)\to P(E)$ as $j\to \infty$. By Reshetnyak's theorem \cite[Theorem 2.38]{AFP} the same convergence holds for the perimeters defined by $\phi_\mu$ and $\phi_{\widetilde \mu}$. As a consequence
\[
P_{\phi_\mu}(E)=\lim_{j\to\infty}P_{\phi_\mu}(E_j)=\lim_{j\to\infty}P_{\phi_{\widetilde\mu}}(E_j)=P_{\phi_{\widetilde\mu}}(E).
\]
\end{proof}

\section{Final remarks}\label{sec:finalrem}
We collect here some direct generalizations, and some open questions and open directions that would require an essential extra effort compared to the present work, but which could be attacked in future work.
\subsection{\texorpdfstring{$\Gamma$}{Gamma}-limits for general signed potentials}
The main question that we leave open is to devise a precise, if possible necessary and sufficient, condition on general signed potentials $V$, so that the $\Gamma$-limit result of Theorem \ref{thm:main} (or the more general versions of Theorem \ref{thm:main} present in \cite{Gel, BraGel, AliGel} mentioned before) still hold.

\medskip

The following stability condition on the energy of finite configurations $X$ is a natural hypothesis to impose, when modelling crystalline Wulff shapes:
\[
    \exists C>0,\quad\forall X\subset \mathcal L,\quad   \mathcal E_V(X)\ge -C\ \sharp X.
\]
It is natural to study the $\Gamma$-limits analogous to our setting, but for general signed $V$, under the above hypothesis, or under a suitable strengthening of this hypothesis.

\subsection{Longer range interactions in quasicrystals}
There are several possible ways to extend the range of applicability of Theorem \ref{thmqc} beyond nearest-neighbor interactions between tiles of the tessellation induced by the multigrid construction of quasicrystals:
\begin{enumerate}
    \item In the dual space, associate an energy $W(y-x)$ depending on differences of positions of multigrid points $x,y$ belonging to a given fixed lattice, given by any $d$-ple of hyperplane families from the multigrid construction.
    \item Define the interaction $W(x,y)$ between two tiles in the primal space as a function of the sequence of rail directions required to reach $y$ starting from $x$. For example, in the case of next-nearest-neighbor interactions, let $W(x,y)$ be nonzero only if tile $y$ is a neighbor of $x$ or a neighbor of a neighbor of $x$. Restricted to neighbors we define $W$ as before, and if there exists $z$ such that $x,z$ belong to a multigrid rail corresponding to multiindex $J'$ and $z,y$ are neighbors corresponding to multiindex $\widetilde J'$, then define $W(x,y)$ as a function depending only on the tuple $(J',\widetilde J')$, and independent on the particular choices of $x,y$.
    \item Consider a potential $W:\mathbb R^d\to [0,+\infty)$ and if $x,y$ are tile centers from the multigrid tiling, define the interaction of the corresponding tiles to be $W(y-x)$.
\end{enumerate}
The first option is the only one in which a direct extension of the setup of Theorem \ref{thmqc} will prove a corresponding $\Gamma$-convengence result, however we do not develop the theory in this direction because the results would be notationally more involved and we do not have a direct application at hand. The remaining options instead seem better physically motivated by a model of quasicrystalline materials, however require an essential effort beyond the setup of Theorem \ref{thmqc} considered here. The comparison between these latter cases seems suited a thorough study in future work.

\subsection{Curved multigrids}
The construction based on merely straight (hyperplane) multigrids that we pursue here can be perturbed, replacing families of hyperplanes by families of curves with richer allowed behavior, which can be well-approximated by hyperplanes at large scale. One then constructs a tiling from the graph of intersections between $(d-1)$-ples of surfaces corresponding to different families, by an algorithm similar to Section \ref{sec_qctiling}. The extension of the proof of Theorem \ref{thmqc} to this setup seems to us an interesting open direction. On the one hand we mention the use of curved multigrids in modelling (see e.g. \cite[Sec. 2.3.2]{janssenbook}) and, on the other hand, on the mathematical side, it is interesting to find outthe correct oscillation bounds and metrics for the multigrid hypersurfaces. More precisely, it would be interesting to pursue the determination of natural sufficient regularity conditions on the families of surfaces that we would use, under which extensions of the result of Section \ref{sec_bddist} hold true.

\bibliographystyle{plain}
\bibliography{quasicrystals}

\end{document}